\DeclareMathOperator{\spa}{span}
\DeclareMathOperator{\II}{\textit{II}}
\newcommand{\R}{\mathbb{R}}
\newcommand{\Z}{\mathbb{Z}}
\newcommand{\La}{\mathbb{L}}
\newcommand{\eps}{\varepsilon}
\DeclareMathOperator{\su}{sup}
\DeclareMathOperator{\indre}{int}
\DeclareMathOperator{\exo}{exo}
\DeclareMathOperator{\tr}{Tr}
\newcommand{\Ha}{\mathcal{H}}
\title{Local digital algorithms for estimating the integrated mean curvature of $r$-regular sets}
\author{Anne Marie Svane}
\institute{Anne Marie Svane \at Department of Mathematics, Aarhus University, 8000 Aarhus C, Denmark,\\ \email{amsvane@imf.ua.dk}}
\titlerunning{Local digital algorithms for estimating the integrated mean curvature}
\begin{document}
\maketitle
\begin{abstract}
Consider the design based situation where an $r$-regular set is sampled on a random lattice. A fast algorithm for estimating the integrated mean curvature based on this observation is to use a weighted sum of $2\times \dotsm \times 2$ configuration counts. We show that for a randomly translated lattice, no asymptotically unbiased estimator of this type exists in dimension greater than or equal to three, while for stationary isotropic lattices, asymptotically unbiased estimators are plenty. Both results follow from a general formula that we state and prove, describing the asymptotic behavior of hit-or-miss transforms of $r$-regular sets.
\keywords{Binary image \and Design based set-up \and Configurations \and Mean curvature \and $r$-regular sets \and Hit-or-miss transform}
\subclass{94A08 \and 28A75 \and 60D05}
\end{abstract}
\section{Introduction}
Suppose we are given a digital image of some geometric object. 
In many practical situations within science, one is mainly interested in certain geometrical characteristics of the underlying object. These are the so-called intrinsic volumes $V_i$ and include the volume $V_d$, the surface area $2V_{d-1}$, the integrated mean curvature $2\pi(d-1)^{-1}V_{d-2}$, and the Euler characteristic $V_0$. Therefore, a time consuming reconstruction of the object is not of interest. 
Instead, we consider an algorithm for estimating the intrinsic volumes based only on local information.

We model a digital image of a compact set $X\subseteq \R^d$ as a binary image, i.e.\ as the set $X\cap \La$ where $\La\subseteq \R^d$ is some lattice.  The vertices of each $2\times \dotsm \times 2$ cell in the lattice may belong to either $X$ or $\R^d\backslash X$, yielding $2^{2^d}$ possible configurations. We then estimate $V_i$ as a weighted sum of the number of occurences of each configuration. The weights are functions of the lattice distance and we assume that they are homogeneous of degree $i$. The advantage of such local algorithms is that they are very efficiently implemented based on linearly filtering the image, see \cite{OM} for more on the computational aspects.

%

We apply these algorithms to the design based setting in which we sample a fixed compact set with a lattice that has been ramdomly translated. Ideally, the estimator should be unbiased, at least aymptotically when the resolution goes to infinity.  

Local estimators for $V_{d-1}$ have already been widely studied. In \cite{rataj}, Kiderlen and Rataj prove a formula for the asymptotic behavior of such an estimator. This was later applied by Ziegel and Kiderlen in \cite{johanna} to show that no asymptotically unbiased estimator for the surface area of the type described above can exist in dimension $d=3$. 

In this paper, we focus on the estimation of $V_{d-2}$. For $d=2$, $V_{d-2}$ is the Euler characteristic. It is well-known that estimating $V_0$ is impossible even in the simple case where $X$ is a polygon. More generally, Kampf has shown in \cite{kampf2} that no asymptotically unbiased estimator for $V_{d-2}$ exists on the class of finite unions of polytopes. In contrast, it was shown already in 1982 by Pavlidis in \cite{pavlidis} that unbiased estimators for $V_0$ do exist on a class of sets with sufficiently `smooth' boundary, namely the class of so-called $r$-regular sets. For this reason, we will require throughout the paper that $X$ is $r$-regular when we consider estimators for $V_{d-2}$ in higher dimensions. 

We are going to prove an extension to second order of Kiderlen and Rataj's asymptotic result \cite[Theorem 1]{rataj}.  In particular, we obtain a formula for the asymptotic mean of a local estimator for $V_{d-2} $. This was  done in \cite{am} for $d=2$ under somewhat stronger conditions. The formula allows us to deduce the following main theorem:
\begin{theorem}\label{main}
In dimension $d>2$, no weighted sum of $2\times \dotsm \times 2$ with homogeneous weights configuration counts defines an asymptotically unbiased estimator for $V_{d-2}$ on the class of $r$-regular sets.
\end{theorem}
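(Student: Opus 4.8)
The plan is to read the asymptotic mean of the estimator off the general second-order formula and then show that matching it to $V_{d-2}$ over \emph{all} $r$-regular sets is impossible once $d>2$. For weights homogeneous of degree $d-2$ the formula should express the limiting (large resolution) expectation of the estimator as a boundary integral
\begin{equation*}
\lim \mathbb{E}\,\hat V_{d-2}(X)=\int_{\partial X}\rho\bigl(n(x),\II_x\bigr)\,d\Ha^{d-1}(x),
\end{equation*}
where $n(x)$ is the outer unit normal, $\II_x$ the second fundamental form at $x$, and $\rho$ a fixed function, \emph{affine-linear} in $\II$ — a normal-dependent order-zero part $\rho_0(n)$ plus a part $\rho_1(n,\II)$ linear in $\II$ — completely determined by the weights; this second-order dependence is exactly what distinguishes the present situation from the Kiderlen--Rataj surface-area setting, where $\rho$ depends on $n$ alone. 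Writing $V_{d-2}(X)=c_d\int_{\partial X}\tr(\II)\,d\Ha^{d-1}$, asymptotic unbiasedness on the whole class becomes the single requirement that $\rho(n,\II)-c_d\tr(\II)$ be a \emph{null integrand}, i.e.\ integrate to zero over the boundary of every $r$-regular set.

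The first reduction pins down the isotropic part of $\rho$. Since $V_{d-2}$ is motion invariant while the randomly translated lattice is only translation invariant, applying the unbiasedness identity to all rotated copies $\phi X$ forces the rotation average $\bar\rho(n,\II)=\int_{\mathrm{SO}(d)}\rho(\phi^{-1}n,\phi^{-1}\II\phi)\,d\phi$ to define the same boundary functional as $\rho$. As $\bar\rho$ is rotation invariant and affine-linear in $\II$, it must be of the form $a+b\,\tr(\II)$, the only such invariants; testing against balls $B_R$ of varying radius, where $\II=R^{-1}\mathrm{Id}$ and the homogeneity degrees $R^{d-1}$ and $R^{d-2}$ separate, forces $a=0$ and $b=c_d$. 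Hence, if an unbiased estimator existed, its $\rho$ would have isotropic part exactly $c_d\tr(\II)$, and the whole difficulty is pushed into the \emph{anisotropic} part $\rho-\bar\rho$, which would then itself have to be a null integrand.

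The crux is to show this cannot happen for $d>2$. The relevant order-$\le 1$ null integrands are produced by the divergence theorem on the closed manifold $\partial X$: decomposing a constant field $e$ into tangential and normal parts gives $\text{div}_{\partial X}(e^{\top})=-\langle e,n\rangle\tr(\II)$, whence $\int_{\partial X}\langle e,n\rangle\,\tr(\II)\,d\Ha^{d-1}=0$ for every $e$, and likewise every linear function of $n$ is null at order zero. These null terms only \emph{rescale the trace}; they never couple $\II$ to a preferred direction. By contrast, the anisotropic part coming from $2\times\dotsm\times 2$ configuration counts depends on $\II$ through its values $\II(u,u)$ along the finitely many lattice directions $u$ of the cell, so it is a genuine \emph{directional} coupling of the curvature, not a multiple of $\tr(\II)$. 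The main obstacle is to make precise that such a coupling can never be a null integrand while the isotropic part is the nonzero multiple $c_d\tr(\II)$: for each candidate weight vector one must produce, using that $r$-regularity allows any pair $(n,\II)$ with principal curvatures bounded by $1/r$ to be realized locally, an $r$-regular test set on which the anisotropic part integrates to a nonzero amount. It is exactly here that $d>2$ enters — in $d=2$ the form $\II$ is the scalar curvature $\kappa$ and $\int_{\partial X}\kappa$ is topological by Gauss--Bonnet, so directional couplings collapse and the argument breaks down — and it is here that the contrast with the stationary isotropic lattice, whose $\rho$ is already rotation invariant and hence equal to $c_d\tr(\II)$, becomes visible.
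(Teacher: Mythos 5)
Your set-up — reading the limiting mean off the second-order formula as $\int_{\partial X}\rho(n,\II)\,d\Ha^{d-1}$, rotation-averaging, and pinning the isotropic part down with balls — is sound and runs parallel to the paper (Corollary \ref{confcor} together with the test set $X=B(r)$, for which rotation invariance lets Theorem \ref{3Dlim} be invoked, yielding the single linear constraint \eqref{wer} on the weights). But the crux of your argument, that the anisotropic remainder $\rho-\bar\rho$ can never be a null integrand, is precisely the step you do not prove, and the structural principle you lean on for it is false. Null integrands of order $\le 1$ are not exhausted by trace rescalings: for any smooth field $W:S^{d-1}\to\R^d$, the tangential divergence $\mathrm{div}_{\partial X}\bigl(W(n)^{\top}\bigr)$ is pointwise a function of $(n,\II_x)$, linear in $\II$, genuinely couples $\II$ to preferred directions through $DW(n)$, and integrates to zero over every closed hypersurface by the divergence theorem — your own constant-field computation is just the special case $DW=0$. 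So ``directional coupling'' is by itself no obstruction to nullity; whether the specific integrand produced by $2\times\dotsm\times 2$ configuration counts minus $c_d\tr(\II)$ is null is a quantitative question about those particular functions, and your proposal contains no computation that decides it. (Two smaller inaccuracies: the paper's $\rho$ is not literally linear in $\II$ pointwise, since $Q^{\pm}$ are extrema over support sets and there is the extra term \eqref{extra} supported on $\{h(B_l\oplus\check W_l,n)=0\}$; and the existence of the limit already forces the order-$a^{-1}$ term $\sum_j w_j\bar\varphi_j(X)$ to vanish for all $X$, a separate constraint your single-$\rho$ framing blurs.)

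What the paper does instead, and what your argument would need, is an explicit evaluation on concrete test sets. In Theorem \ref{non} ($d=3$) it takes $X_i=[0,t_iu_i]\oplus B(r)$ with $u_1=(1,0,0)$, $u_2=\tfrac{1}{\sqrt2}(1,1,0)$, $u_3=\tfrac{1}{\sqrt3}(1,1,1)$; on the cylindrical part $[0,t_iu_i]\times rS^1(u_i)$ the form $Q$ is completely explicit, Lemma \ref{kryds} kills the term \eqref{extra}, and the unbiasedness requirement becomes three linear equations $h_1=2(w_2-w_6)t_1=t_1$, $h_2=\sqrt2\bigl((w_1-w_7)+(w_3-w_5)\bigr)t_2=t_2$, $h_3=\sqrt3\bigl((w_1-w_7)+(w_2-w_6)-(w_3-w_5)\bigr)t_3=t_3$, which together with \eqref{wer} form an inconsistent system in the differences $w_1-w_7$, $w_2-w_6$, $w_3-w_5$. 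Theorem \ref{main'} then handles $d>3$ by induction over the sets $\bigl(\bigoplus_{i=1}^k[0,t_iu_i]\bigr)\oplus B(r)$, using the Steiner formula and a projection argument to reduce the curvature integrals to the three-dimensional ones. Your Gauss--Bonnet remark correctly locates why $d=2$ is different, but without a counterpart of the explicit inconsistent linear system — or a corrected classification of null integrands that rules out the divergence-type ones above — the proposal does not close.
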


This is contrary to the $d=2$ case, but it generalizes Kampf's result to the class of $r$-regular sets. It is proved as Theorem \ref{main'} below. The counterexamples can be chosen very simply to be of the form $P\oplus B(r)$ where $B(r)$ is the ball of radius $r$ and $P=\bigoplus_{i=1}^k [0,u_i]$ where $u_1,\dots , u_k\in \R^d$ are orthonormal vectors and $\oplus $ is the Minkowski sum.

We give a formal definition of the type of local algorithm we consider in Section~\ref{notation}, and in Section~\ref{design} we explain the design based setting and recall some known results. In Section~\ref{regular} and~\ref{rotinv}, we prove some general results on hit-or-miss transforms of $r$-regular sets with finite structuring elements. As a corollary, we obtain formulas for the asymptotic behavior of the mean estimator for $V_{d-2}$ in Section~\ref{apply}. In Section~\ref{3Diso}, we apply this to find all asymptotically unbiased estimators in 3D under the assumption that the lattice $\La$ is isotropic. In the remaining two sections, we investigate the case where the lattice is not assumed to be isotropic. In Section~\ref{EC}, we recover the Pavlidis' result that an asymptotically unbiased estimator for $V_0$ does exist in dimension $d=2$. Finally, we prove Theorem~\ref{main} in Section~\ref{higher}.

\section{Local estimators for intrinsic volumes}\label{notation}
Let $C$ denote the unit square $[0,1]^d$ in $\R^d$ and let $C_0$ be the set of vertices in $C$. The vectors of the standard basis in $\R^d$ will be denoted by $e_1,\dots ,e_d$. We enumerate the elements of $C_0$ as follows: for $x\in C_0$ we write $x=x_i$ where
\begin{equation*}
i=\sum_{k=1}^d 2^{k-1}\mathds{1}_{\langle x,e_k\rangle=1 }.
\end{equation*}
Here $\mathds{1}_{\langle x,e_k\rangle=1}$ is the indicator function.
A $2\times \dots \times 2$ configuration is a subset $\xi \subseteq C_0$. There are $2^{2^d}$ possible configurations. We denote these by $\xi_l$ for $l=0,\dots,2^{2^d}-1$ where the configuration $\xi$ is assigned the index 
\begin{equation*}
l=\sum_{i=0}^{2^{2^d}-1} 2^i \mathds{1}_{x_i\in \xi}.
\end{equation*}
One could of course consider estimators based on $n\times \dotsm \times n$ configurations as well. The formulas we obtain in Section \ref{regular} and \ref{rotinv} apply to this case as well, but we treat only estimators based on $2\times \dotsm \times 2$ configurations in this paper. 
 
Let $\Z^d$ denote the standard lattice in $\R^d$.
More generally, we shall consider orthogonal lattices $a\La(c,R)=aR(\Z^d + c)$ where $c\in C$ is a translation vector, $R\in SO(d)$ is a rotation, and $a>0$ is the lattice distance. Then $C(a\La)$, $C_0(a\La)$, and $\xi_l(a\La)$ will denote the corresponding transformations of $C$, $C_0$, and $\xi_l$, respectively. We leave the lattice out of the notation whenever it is clear from the context. The generalization to the case where $\La$ is a general linear transformation of $\Z^d$ is straightforward and is left to the reader.

The elements of $\xi_l$ are referred to as the `foreground' or `black' pixels and will also sometimes be denoted by $B_l$, while the vertices of the complement $W_l=C_0\backslash \xi_l =\xi_{2^{2^d}-l}$ are referred to as the `background' or `white' pixels.

Now let $X\subseteq \R^d$ be a compact set observed on the lattice $a\La $. Based on the set $X\cap a\La$ we want to estimate the intrinsic volumes $V_i(X)$ for $i=0,\dots ,d$. For a general definition of $V_i$ in the case where $X$ is polyconvex, see \cite{schneider}. In this paper, we will only need the $V_i$ introduced at the beginning of the introduction. In order for $V_i$ to be well-defined and for $X\cap a\La$ to contain enough information about $X$, we will need some regularity conditions on $X$. These will be specified later. 

Our approach is to consider a local algorithm based on the observations of $X$ on the $2\times  \dotsm \times 2$  cells $C_z$ of $a\La$, where $C_z=z+C(a\La)$ for $z\in a\La(0,R)$.
The number of occurences of the configuration $\xi_l$ is  
\begin{equation*}\label{indic}
N_l(X\cap a\La) = \sum_{z \in a\La(0,R) } \mathds{1}_{X \cap (z+C_0(a\La)) = z+\xi_l(a\La)}.
\end{equation*}
Note that $N_l$ depends only on $X\cap a\La$, as 
\begin{equation*}
X \cap (z+C_0(a\La))=(X\cap a\La) \cap (z+C_0(a\La)).
\end{equation*}

If $\Phi_i(X;\cdot)$ denotes the $i$th curvature measure, normalized as in \cite{schneider},
\begin{equation*}
V_i(X)=\Phi_i(X;\R^d) = \sum_{z\in a\La(0,R)} \Phi_i(X;C_{z}^0)
\end{equation*}
where 
\begin{equation*}
C_{z}^0=z + Ra([0,1)^d+c).
\end{equation*}

We estimate each term in the sum based on the only information available about $ X \cap C_z$, namely the set $X\cap (z+C_0(a\La))$. If $X\cap (z+C_0(a\La))=z+\xi_l(a\La)$, we estimate $\Phi_i(X;C_{z}^0)$ by some $w_l^{(i)}(a)\in \R$, leading to an estimator of the form
\begin{equation}\label{estdef}
\hat{V}_i(X\cap a\La)=\sum_{l=0}^{2^{2^d}-1}  w_l^{(i)}(a){N}_l(X\cap a\La).
\end{equation}
The $w_l^{(i)}(a)$ are referred to as the weights.

Let $\mathcal{M}$ be the set of rigid motions and reflections preserving $C_0$. If $|\mathcal{M}|$ is the cardinality of $\mathcal{M}$,
\begin{equation*}
\hat{V}_i'(X\cap a\La)=\frac{1}{|\mathcal{M}|}\sum_{M\in \mathcal{M}} \hat{V}_i(M(X\cap a\La)).
\end{equation*}
is another estimator of the form \eqref{estdef} and the bias of $\hat{V}_i'(X)$ is the average of the biases of $\hat{V}_i$ on the sets $MX$, since $V_i(X)$ is motion and reflection invariant. Hence the worst possible bias of $\hat{V}_i'$ on the sets $MX$ is smaller than that of $\hat{V}_i$. Thus, in the search for unbiased estimators, it is enough to consider estimators with weights satisfying $w_{l_1}^{(i)}(a)=w_{l_2}^{(i)}(a)$ whenever $\xi_{l_1} = M\xi_{l_2}$ for some $M\in \mathcal{M}$. 

As $V_i$ is homogeneous of degree $i$, i.e.\ $V_i(aX)=a^iV_i(X)$, we will require the estimator to satisfy
\begin{equation*}
\hat{V}_i(aX\cap a\La)=a^i V_i(X\cap \La),
\end{equation*}
corresponding to weights of the form $w_l^{(i)}(a)=a^iw_l^{(i)}$ where $w_l^{(i)}\in \R$ are constants.

If $\eta_j^d$, $j\in J$, denote the equivalence classes of configurations under the action of $\mathcal{M}$, we end up with an estimator of the form
\begin{equation}\label{motion}
\hat{V}_i(X\cap a\La)=a^{i}\sum_{j\in J} w_j^{(i)}\bar{N}_j(X\cap a\La)
\end{equation}
where $w_j^{(i)}\in \R$ and 
\begin{equation*}
\bar{N}_j=\sum_{l:\xi_l\in \eta_j^d} N_l.
\end{equation*}

\section{The design based setting}\label{design}
In the design based setting we observe a fixed set $X\subseteq \R^d$ on a random lattice.  If the lattice is of the form $a\La=aR(\Z^d+c)$ where $c\in C$ and $R\in SO(d)$ are both uniform random and mutually independent, we shall speak of a stationary isotropic lattice. If $a\La = a(R\Z^d+c)$ where the translation vector $c\in C$ is uniform random while $R\in SO(d)$ is now fixed, we refer to it as a stationary non-isotropic lattice. In both cases, the local estimator \eqref{motion} is now a random variable with mean 
\begin{equation*}
E\hat{V}_i(X\cap a\La)= a^i\sum_{j\in J} w_j^{(i)}E\bar{N}_j(X\cap a\La).
\end{equation*}

Ideally, this would equal $V_i(X)$. However, this is generally not true in finite resolution, i.e.\ for $a>0$. Instead, we consider the asymptotic behavior of $E\hat{V}_i(X)$ as $a$ tends to 0. This is obtained by explicit formulas for the asymptotic behavior of $a^{i}EN_l$ when $a\to 0$.

Since $N_0$ is infinite, $w_0^{(i)}$ must equal zero in order for $\hat{V}_i$ to be well-defined. All other $EN_l$ are of order $O(a^{1-d})$, see \eqref{V1} below, except $EN_{2^{2^d}-1}$. In fact, for all the sets $X$ we shall consider, 
\begin{equation*}
\lim_{a\to 0}a^{d-i}E\hat{V}_i(X) = w_{2^{2^d}-1}^{(i)}V_d(X),
\end{equation*}
see e.g.\ \cite{OM}. Thus for $i<d$, we must require $w_{2^{2^d}-1}^{(i)}=0$, otherwise the limit $\lim_{a\to 0} E \hat{V}_i(X\cap a\La)$ does not exist. 

For the surface area, it was shown by Kiderlen and Rataj \cite[Theorem~5]{rataj} that if $X$ is a full-dimensional  compact gentle set and $\La$ is a stationary non-isotropic lattice,
\begin{equation}\label{V1}
\lim_{a\to 0} a^{d-1}EN_l(X\cap a\La) = \int_{\partial X }(-h(B_l\oplus \check{W}_l,n))^+d\Ha^{d-1}
\end{equation}
where for a set $S\subseteq \R^d$, $h(S,n)=\su \{\langle s,n\rangle\mid s\in S\}$ for $n\in S^{d-1}$ is the support function, $\check{S}=\{-s\mid s \in S\}$, and $\oplus$ is the Minkowski sum. Moreover, $x^+=\textrm{max}\{0,x\}$ for $x\in \R$,   and $\Ha^{k}$ denotes the $k$th Hausdorff measure.
The notion of a gentle set is explained in \cite{rataj}.

This result was later used by Ziegel and Kiderlen in \cite{johanna} to prove that there does not exist an asymptotically unbiased local estimator for the surface area of polygons in dimension $d=3$. 

Actually, Kiderlen and Rataj proved a much more general theorem, namely \cite[Theorem 1]{rataj}. We shall state the theorem here in a special case for later comparison:
\begin{theorem}[Kiderlen, Rataj]\label{thmV1}
Let $X\subseteq \R^d$ be a closed gentle set, $A\subseteq \R^d$ a bounded Borel set, and $B,W\subseteq \R^d$ two non-empty finite sets. Then
\begin{align}
\lim_{a\to 0} & a^{-1}\Ha^d(\xi_{\partial X}^{-1}(A)\cap (X\ominus a\check{B})\backslash(X\oplus a\check{W}))=\int_{\partial X\cap A}(-h(B\oplus \check{W},n))^+d\Ha^{d-1}\nonumber
\\&=\int_{\partial X\cap A}((-h(B,n))-h(\check{W},n) )\delta_{(B,W)}(n)d\Ha^{d-1}.\label{HaV1}
\end{align}
\end{theorem}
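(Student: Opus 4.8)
The plan is to turn the $d$-dimensional volume into a $(d-1)$-dimensional boundary integral by foliating a thin neighbourhood of $\partial X$ along its normal lines and computing, for each boundary point, the length of the corresponding one-dimensional slice of the hit-or-miss region. First I would localize: since $B$ and $W$ are finite, set $\rho=\su\{\|b\|,\|w\|\mid b\in B,\,w\in W\}$. If $z\in(X\ominus a\check B)\backslash(X\oplus a\check W)$, then every black translate $z+ab$ ($b\in B$) lies in $X$ while every white translate $z+aw$ ($w\in W$) lies outside $X$, all within distance $a\rho$ of $z$, so $\partial X$ passes within $a\rho$ of $z$. Hence for small $a$ the whole region lies in the tubular neighbourhood of $\partial X$ on which, by gentleness of $X$ (as in \cite{rataj}), the metric projection $\xi_{\partial X}$ and a signed normal coordinate are defined $\Ha^d$-almost everywhere, and $\xi_{\partial X}^{-1}(A)$ reduces to the tube over $\partial X\cap A$.

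Next I would change variables to normal coordinates $z=x+t\,n(x)$, with $x\in\partial X$, $t\in\R$, and $n(x)$ the outer unit normal. The coarea formula gives
\[
a^{-1}\Ha^d\bigl(\xi_{\partial X}^{-1}(A)\cap(X\ominus a\check B)\backslash(X\oplus a\check W)\bigr)=\int_{\partial X\cap A} a^{-1}\Ha^1\bigl(I_a(x)\bigr)\,J_a(x)\,d\Ha^{d-1}(x),
\]
where $I_a(x)=\{t\mid x+t\,n(x)\in(X\ominus a\check B)\backslash(X\oplus a\check W)\}$ is the normal slice and $J_a(x)\to1$ is an effective Jacobian factor carrying the curvature of $\partial X$. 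The core is the pointwise slice length. For $\Ha^{d-1}$-a.e.\ $x$ there is an approximate tangent hyperplane, so near $x$ the set $X$ agrees to first order with the half-space $H=\{y\mid\langle y-x,n(x)\rangle\le0\}$. Placing the scaled configuration at $z=x+t\,n(x)$, membership $z\in X\ominus a\check B$ means every black translate $z+ab$ lies in $X$, whereas $z\notin X\oplus a\check W$ means every white translate $z+aw$ lies outside $X$. In the half-space limit these conditions read $t\le-a\,h(B,n)$ and $t>a\,h(\check W,n)$, so the admissible $t$ fill an interval of length $a\bigl(-h(B,n)-h(\check W,n)\bigr)^+=a\bigl(-h(B\oplus\check W,n)\bigr)^+$, using additivity of $h(\cdot,n)$ under Minkowski sum. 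Thus $a^{-1}\Ha^1(I_a(x))\to(-h(B\oplus\check W,n))^+$.

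Finally I would pass to the limit under the integral by dominated convergence: the slice length is $O(a)$ with constant controlled by $\rho$, so the integrand is uniformly bounded, $J_a\to1$ boundedly, and the tube over the bounded set $\partial X\cap A$ has finite $\Ha^{d-1}$-measure, yielding
\[
\lim_{a\to0}a^{-1}\Ha^d\bigl(\xi_{\partial X}^{-1}(A)\cap(X\ominus a\check B)\backslash(X\oplus a\check W)\bigr)=\int_{\partial X\cap A}\bigl(-h(B\oplus\check W,n)\bigr)^+\,d\Ha^{d-1}.
\]
The second equality in the statement is then formal: with $\delta_{(B,W)}(n):=\mathds{1}_{h(B\oplus\check W,n)<0}$ the indicator of the normals at which the configuration can occur, one has $(-h(B\oplus\check W,n))^+=(-h(B,n)-h(\check W,n))\,\delta_{(B,W)}(n)$. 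I expect the main obstacle to be this final limit together with the regularity needed for the change of variables: for a merely gentle set the normal coordinate and $\xi_{\partial X}$ are controlled only $\Ha^d$-a.e., so justifying the coarea identity and, above all, producing a uniform-in-$a$ dominating function for the slice lengths away from the $\Ha^{d-1}$-null bad set where the approximate tangent plane fails is the delicate part; the half-space slice computation itself is elementary once the first-order approximation is in hand.
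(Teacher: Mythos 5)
Your outline is essentially the proof from the literature: the paper itself does not reprove this theorem (it is quoted from \cite{rataj}), but both the original argument and the paper's proof of its second-order extension, Theorem \ref{nonstat}, follow exactly your plan --- localize to a tubular neighbourhood, convert the volume to a boundary integral via the local Steiner-type formula of Hug--Last--Weil \cite[Theorem 2.1]{last} (this known result is precisely what replaces your coarea step with Jacobian $J_a$ for sets that are merely gentle), compute the normal slice against the half-space limit to get the interval of length $a(-h(B\oplus\check{W},n))^+$, and pass to the limit by dominated convergence using the automatic bound $\Ha^1(I_a(x))\leq 2a\rho$, which holds because $|t|$ is the distance to $\partial X$ for points of the hit-or-miss transform in the tube. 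The one place where your sketch should be sharpened is the pointwise slice limit: a measure-theoretic approximate tangent hyperplane does not control one-dimensional sections along normal lines, and the correct justification for gentle sets is the defining condition that $\Ha^{d-1}$-almost every boundary point admits balls touching from inside and outside, which sandwiches the boundary near $x$ and forces the slice to be an interval up to $o(a)$ endpoint errors (in the paper's $r$-regular setting this is exactly the role of the function $q$ in Lemma \ref{diff}); you correctly identify this as the delicate point, so I regard it as an acknowledged technicality rather than a gap.
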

Here $\ominus $ is the Minkowski set difference. The set
\begin{equation*}
(X\ominus a\check{B})\backslash(X\oplus a\check{W})=  \{z\in \R^d \mid z+aB\subseteq X,z+aW\subseteq \R^d\backslash X\}
\end{equation*}
is called the hit-or-miss transform of $X$.
If $\exo(\partial X)$ denotes the set of points in $\R^d$ that do not have a unique closest point in $\partial X$, then $\xi_{\partial X}$ is the function $\xi_{\partial X}:\R^d\backslash \exo(\partial X) \to  \partial X$ that takes a point $z$ to the point in $\partial X$ closest to $z$. In the last line, the integral has just been rewritten in a form similar to what we shall later obtain with the notation
\begin{equation*}
\delta_{(B,W)}(n)=\mathds{1}_{\{h(B\oplus \check{W},n)<0\}}.
\end{equation*}

Equation \eqref{V1} follows from Theorem \ref{thmV1} and the observation that
\begin{align}\nonumber
a^dEN_l &= a^d\int_{ C}\sum_{z \in a\La(0,R) } \mathds{1}_{\{X \cap (z+C_0(a\La(c,R))) = z+ \xi_l(a\La(c,R))\}}dc\\ \label{HEN}
&=\Ha^d( z\in \R^d \mid z+aB_l\subseteq X,z+aW_l\subseteq \R^d\backslash X)\\
&=\Ha^d( (X\ominus a\check{B}_l)\backslash(X\oplus a\check{W}_l)).\nonumber
\end{align}

In the following section, we will consider the second order asymptotic behavior of 
\begin{equation*}
\Ha^d(\xi_{\partial X}^{-1}(A)\cap (X\ominus a\check{B})\backslash(X\oplus a\check{W}))
\end{equation*}
for $r$-regular sets $X$ when $a$ tends to zero. The main result is a formula similar to \eqref{HaV1} but with the support functions replaced by certain quadratic terms. 
Choosing $(B,W)=(B_l,W_l)$, Equation \eqref{HEN} shows that this has implications for the asymptotic behavior of $a^{d-2}EN_l$ and thus for the asymptotic mean of $\hat{V}_{d-2}$.  

\section{Hit-or-miss transforms of $r$-regular sets}\label{regular}
As explained in the introduction, estimating $V_i$ causes problems for $i<d-1$ even for polygons, so we need some strong assumptions on $X$. Thus we consider the class of so-called $r$-regular sets:

\begin{definition}\label{defreg}
A closed subset $X\subseteq \R^d $ is called $r$-regular for $r>0$ if for all $x\in \partial X$ there exists two balls $B_i$ and $B_o$ of radius $r$ both containing $x$ such that $B_i\subseteq X$ and $\indre (B_o)\subseteq \R^d\backslash X$. 
\end{definition}
The definition implies that $\partial X$ is a $C^1$ manifold, see e.g.\ \cite{federer}, and to all $x\in \partial X$ there is a unique outward pointing normal vector $n(x)$. Federer showed in \cite{federer} that the normal vector field $n$ is $\Ha^{d-1}$-almost everywhere differentiable. In particular, its principal curvatures $k_i$ can be defined almost everywhere  as the eigenvalues of the differential $dn$ corresponding to the orthogonal principal directions $e_i$. This generalizes the definition for $C^2$ manifolds. Note for later that each $k_i$ is bounded by ${r}^{-1}$.

Federer uses the principal curvatures to generalize the curvature measures for convex sets, see e.g. \cite{schneider}, to the much larger class of sets of positive reach which includes the class of $r$-regular sets.
In particular, $2\pi(d-1)^{-1} V_{d-2}$ is defined as the integrated mean curvature, i.e.
\begin{equation*}
V_{d-2}(X)=\frac{1}{2\pi} \int_{\partial X} (k_1+\dotsm+k_{d-1}) d\Ha^{d-1}.
\end{equation*}

The notion of principal curvatures also allows for a definition of the second fundamental form $\II_x$ on the tangent space $T_x\partial X$ for $\Ha^{d-1}$-almost all $x\in \partial X$, similar to the definition for $C^2$ manifolds. For $\sum_{i=1}^{d-1}\alpha_ie_i\in T_x\partial X$, $\II_x$ is given by
\begin{equation*}
\II_x\left(\sum_{i=1}^{d-1}\alpha_ie_i \right)= \sum_{i=1}^{d-1}k_i(x)\alpha_i^2
\end{equation*}
whenever $d_xn$ is defined.
Note that $\tr ( \II)=k_1+\dotsm+k_{d-1}$.

When $X$ is $r$-regular, the orthogonal complement $N_x$ of $T_x \partial X$ is the line spanned by $n(x)$.
Thus we may define $Q$ to be the quadratic form  given on $(\alpha,tn(x))\in T_x\partial X\oplus N_x = \R^d$ by
\begin{equation*}
Q_x(\alpha,tn(x))= -\II_x(\alpha) + \tr (\II_x)t^2,
\end{equation*}
whenever $\II_x$ is defined.

For a compact set $S\subseteq \R^d$, let
\begin{align*}
{S}_+(n)&=\{s\in S \mid h(S,n)=\langle s,n\rangle\},\\
{S}_-(n)&=\{s\in S \mid -h(\check{S},n)=\langle s,n\rangle\}={S}_+(-n)
\end{align*}
denote the support sets.
Define 
\begin{align*}
\II_x^+(S)&= \max\{\II_x(s)\mid s \in {S}_+(n(x))\},\\
\II_x^-(S)&= \min\{\II_x(s)\mid s \in {S}_-(n(x))\}.
\end{align*}
Here $\II_x(s)$ means $\II_x(\pi_x(s))$ where $\pi_x:\R^d \to T_x\partial X$ is the projection. 
Since $S_+(n)$ may contain more than one point, $\II_x^+(S)$ may not attain its value at a unique $s\in S$. Thus we need the following:


%

\begin{lemma} \label{pm}
For a finite set $S\subseteq \R^d$, there exist two measurable functions $s^+,s^- : \partial X \to S$ such that $ s^{\pm}(x)\in {S}_{\pm}(n(x))$ and $\II_x^\pm(S)=\II_x(s^\pm(x))$ for all $x\in \partial X$ where $\II_x$ is defined. In particular, $\II^{\pm}(S) $ are measurable functions. 
\end{lemma}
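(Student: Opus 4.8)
The plan is to carry out a direct measurable selection over the finite set $S$, the only real work being to check that the relevant functions of $x$ are measurable. Fix an enumeration $S=\{s_1,\dots,s_m\}$. Since $\partial X$ is a $C^1$ manifold, the outward normal field $x\mapsto n(x)$ is continuous, so each $g_j(x):=\langle s_j,n(x)\rangle$ and hence $h(S,n(x))=\max_j g_j(x)$ is continuous in $x$. Consequently $S_+(n(x))=\{s_j : g_j(x)=h(S,n(x))\}$ is governed by the continuous functions $g_j-h$: for each index set $T\subseteq\{1,\dots,m\}$ the set $A_T=\bigcap_{j\in T}\{g_j=h\}\cap\bigcap_{j\notin T}\{g_j<h\}$ is Borel, and the nonempty-$T$ sets $A_T$ partition $\partial X$ according to which points of $S$ lie in the support set.

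The key measurability input concerns $x\mapsto\II_x(s_j)$. Writing $\pi_x(s_j)=s_j-\langle s_j,n(x)\rangle n(x)$ for the tangential projection, one has $\II_x(s_j)=\langle d_xn(\pi_x(s_j)),\pi_x(s_j)\rangle$ wherever $d_xn$ exists, since $d_xn(e_i)=k_ie_i$ and $\II_x(\sum_i\alpha_ie_i)=\sum_i k_i\alpha_i^2$. By Federer's theorem $n$ is $\Ha^{d-1}$-a.e.\ differentiable, and as an a.e.\ limit of continuous difference quotients (in local charts) the differential $x\mapsto d_xn$ is a measurable matrix field; combined with the continuity of $\pi_x$ this shows each $f_j(x):=\II_x(s_j)$ is measurable on the full-measure set where $\II_x$ is defined. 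Routing the computation through the shape operator $d_xn$ directly avoids having to select measurable principal directions $e_i$, which need not be unique where curvatures coincide.

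With these functions in hand the selection is routine. Define $s^+(x)$ to be $s_j$, where $j$ is the smallest index with $s_j\in S_+(n(x))$ and $f_j(x)=\II_x^+(S)=\max\{f_k(x):s_k\in S_+(n(x))\}$. On each $A_T$ this amounts to maximizing the finitely many measurable functions $(f_j)_{j\in T}$ with a minimal-index tie-break, so $\{x:s^+(x)=s_j\}$ is a finite Boolean combination of the sets $\{f_j\ge f_k\}$, $\{f_j>f_k\}$ and $A_T$, hence measurable; thus $s^+$ is measurable, $s^+(x)\in S_+(n(x))$ by construction, and $\II^+(S)$ agrees with a measurable function on each piece. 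The function $s^-$ is obtained identically after replacing $n$ by $-n$ (so that $S_-(n)=S_+(-n)$) and the maximum by a minimum. On the $\Ha^{d-1}$-null set where $\II_x$ is undefined we may set $s^\pm$ to be, say, the minimal-index element of $S_\pm(n(x))$, which keeps both functions measurable while not affecting the asserted identities.

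I do not expect a genuine obstacle here: the statement is essentially a finite measurable-selection exercise. The one point deserving care is the measurability of $x\mapsto\II_x(s_j)$, which is why I route it through the a.e.\ differential $d_xn$ rather than through principal curvatures and directions; the remaining bookkeeping — the Borel partition by support index and the lexicographic tie-breaking — is then immediate.
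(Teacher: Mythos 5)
Your proof is correct and follows essentially the same route as the paper's: a finite measurable partition of $\partial X$ according to which elements of $S$ realize the support set and the extremum of $\II$, followed by a piecewise constant (in your case, minimal-index) choice. The only difference is that you additionally justify the measurability of $x\mapsto\II_x(s)$ via the a.e.\ measurable shape operator $d_xn$, a point the paper simply asserts with ``since $\II$ is measurable''.
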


\begin{proof}
The finitely many sets 
\begin{equation*}
\{x\in \partial X \mid s\in {S}_+(n(x)) \}\cap \{x\in \partial X \mid \II^+_x(S)=\II^+_x(s)\}
\end{equation*}
 for $s\in S$ are measurable since $\II$ is measurable. They divide $\partial X$ into finitely many measurable sets of the form
\begin{equation*}
\{ x\in \partial X \mid \{s\in {S}_+(n(x)) \mid \II^+_x(S)=\II^+_x(s)\}=S_1 \}
\end{equation*}
for $S_1\subseteq S$ and we just make a constant choice of $s^+\in S_1$ on each of them. \qed
\end{proof}

Now define 
\begin{equation*}
Q^\pm_x(S)=Q_x(s^\pm(x))
\end{equation*}
and note that this is independent of the actual choice of $s^\pm$.

%

We are now ready to state the main result of this section:
\begin{theorem}\label{nonstat}
Let $X\subseteq \R^d$ be an $r$-regular set, $A\subseteq \R^d$ a bounded Borel set, and $B,W\subseteq \R^d$ two non-empty finite sets. 
Then
\begin{align}\nonumber
\lim_{a\to 0 } \bigg( {}&a^{-2}\Ha^d(\xi_{\partial X}^{-1}(A)\cap (X\ominus a\check{B})\backslash (X\oplus a\check{W}))\\ \nonumber
&-a^{-1}\int_{\partial X\cap A}(-h(B\oplus \check{W},n))^+d\Ha^{d-1}\bigg)\\\label{basisterm}
={}&\frac{1}{2}\int_{\partial X\cap A}(Q^+(B)-Q^-(W))\delta_{(B,W)}(n)d\Ha^{d-1}\\ \label{extra}
&+\frac{1}{2}\int_{\partial X\cap A} (\II^-(W)- \II^+(B))^+\mathds{1}_{\{h(B\oplus \check{W},n)=0\}} d\Ha^{d-1}.
\end{align}
\end{theorem}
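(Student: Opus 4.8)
The plan is to parametrise the thin shell around $\partial X$ by the nearest--point projection and reduce the volume to a one--dimensional fibre integral over $\partial X\cap A$, then expand the length of each fibre to second order in $a$. Since $B$ and $W$ are finite, for small $a$ the set $(X\ominus a\check{B})\backslash(X\oplus a\check{W})$ is contained in a shell $\{z:\operatorname{dist}(z,\partial X)\le Ca\}$ lying well inside the reach $r$, so on it the projection $\xi_{\partial X}$ and the signed normal coordinate are smooth away from $\exo(\partial X)$. Applying Federer's tube (coarea) formula for sets of positive reach to the map $(x,t)\mapsto x+tn(x)$, whose Jacobian is $\prod_{i=1}^{d-1}(1+tk_i(x))$, I would write
\[
\Ha^d\bigl(\xi_{\partial X}^{-1}(A)\cap(X\ominus a\check{B})\backslash(X\oplus a\check{W})\bigr)=\int_{\partial X\cap A}\int_{T_a(x)}\prod_{i=1}^{d-1}(1+tk_i(x))\,dt\,d\Ha^{d-1}(x),
\]
where $T_a(x)=\{t\mid x+tn(x)\in(X\ominus a\check{B})\backslash(X\oplus a\check{W})\}$.

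Next I would identify $T_a(x)$ to second order. At a point $x$ where $n$ is differentiable I use principal coordinates in which $\partial X$ is locally the graph $s=f(y)$ over $T_x\partial X$ with $f(y)=-\tfrac12\II_x(y)+o(|y|^2)$; this expansion holds for $\Ha^{d-1}$-a.e.\ $x$ by Federer's a.e.\ differentiability of $n$, and $|k_i|\le r^{-1}$ provides uniform two--sided quadratic control. For $z=x+tn(x)$ and $b\in B$, the condition $z+ab\in X$ becomes $t\le -a\langle b,n\rangle-\tfrac{a^2}2\II_x(b)+o(a^2)$; minimising over $b$ — first over the support set $B_+(n)$, which forces the linear term $-ah(B,n)$, and then maximising $\II_x$ there — yields the upper bound $t\le t_B:=-ah(B,n)-\tfrac{a^2}2\II_x^+(B)+o(a^2)$. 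Symmetrically, $z+aW\subseteq\R^d\backslash X$ gives the lower bound $t>t_W:=ah(\check{W},n)-\tfrac{a^2}2\II_x^-(W)+o(a^2)$, and for small $a$ the conditions defining $T_a(x)$ involve only points within a single graph chart around $x$, so that $T_a(x)$ is the interval $(t_W,t_B)$ of length $(t_B-t_W)^+$.

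Writing $L:=t_B-t_W=-a\,h(B\oplus\check{W},n)-\tfrac{a^2}2(\II_x^+(B)-\II_x^-(W))+o(a^2)$ and expanding $\prod_{i=1}^{d-1}(1+tk_i)=1+t\,\tr(\II_x)+O(t^2)$, the fibre integral becomes
\[
\int_{t_W}^{t_B}\prod_{i=1}^{d-1}(1+tk_i(x))\,dt=L+\tfrac12\tr(\II_x)(t_B^2-t_W^2)+O(a^3),
\]
and $t_B^2-t_W^2=a^2\bigl(h(B,n)^2-h(\check{W},n)^2\bigr)+O(a^3)$. Using the identities $Q_x^+(B)=-\II_x^+(B)+\tr(\II_x)h(B,n)^2$ and $Q_x^-(W)=-\II_x^-(W)+\tr(\II_x)h(\check{W},n)^2$ (which follow from $\langle s^+,n\rangle=h(B,n)$ and $\langle s^-,n\rangle=-h(\check{W},n)$), the $a^2$-coefficient is exactly $\tfrac12(Q_x^+(B)-Q_x^-(W))$ wherever $L$ is eventually positive.

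Finally I would split $\partial X\cap A$ by the sign of $h(B\oplus\check{W},n)$ and pass to the limit under the integral. On $\{h(B\oplus\check{W},n)>0\}$ the fibre is eventually empty and both terms vanish; on $\{h(B\oplus\check{W},n)<0\}$ (where $\delta_{(B,W)}(n)=1$) subtracting the first--order term $a^{-1}(-h(B\oplus\check{W},n))^+$ cancels the $O(a)$ part — recovering Theorem~\ref{thmV1} to first order — and leaves $\tfrac12(Q^+(B)-Q^-(W))$, giving \eqref{basisterm}; on the critical set $\{h(B\oplus\check{W},n)=0\}$ the linear term drops out, $L=\tfrac{a^2}2(\II_x^-(W)-\II_x^+(B))+o(a^2)$, and $a^{-2}\int_{T_a(x)}\to\tfrac12(\II^-(W)-\II^+(B))^+$, which is precisely the additional term \eqref{extra}. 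Interchanging limit and integral is justified by dominated convergence with a constant dominating function, integrable because $\Ha^{d-1}(\partial X\cap A)<\infty$, the bound being uniform thanks to $|k_i|\le r^{-1}$. The main obstacle is making the pointwise second--order expansion rigorous at $\Ha^{d-1}$-a.e.\ boundary point and controlling its $o(a^2)$ error for the merely $C^1$, a.e.\ twice differentiable boundary well enough to feed dominated convergence, together with the delicate analysis on the critical set $\{h(B\oplus\check{W},n)=0\}$, where the first--order contribution degenerates and the second--order data alone determines both the sign and the size of the fibre, producing the genuinely new term \eqref{extra}.
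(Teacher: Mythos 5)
Your proposal is correct and follows essentially the same route as the paper's proof: a normal-tube reduction to fibre integrals with Jacobian $\prod_{i=1}^{d-1}(1+tk_i)=\sum_m t^m s_m(k)$ (the paper invokes the Hug--Last--Weil local Steiner formula, which is the same device), second-order expansion of the fibre endpoints via the graph-height function (the paper's Lemma~\ref{diff}), attainment of the extremal $t$-values on the support sets for small $a$ (Lemma~\ref{t'}), and comparison of the fibre-nonemptiness indicator with $\delta_{(B,W)}$ to isolate the critical-set term \eqref{extra} (Lemma~\ref{hbegr}). The technical obstacles you flag at the end --- uniform $o(a^2)$ control for the merely a.e.\ twice differentiable boundary and domination on the sets where the fibre indicator disagrees with $\delta_{(B,W)}$ --- are exactly what those lemmas supply, via the inner/outer ball bound $|q(x,a\alpha)|\le r-\sqrt{r^2-|a\alpha|^2}$ rather than the curvature bound alone.
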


This formula is a second order version of Theorem \ref{thmV1}. Note in particular how \eqref{basisterm} resembles \eqref{HaV1}. This will be even more clear later in the isotropic setting. 

The term \eqref{extra} vanishes if the surface area measure $S_{d-1}(X,\cdot)$ on $S^{d-1}$, see~\cite{schneider}, vanishes on each of the great circles $\{n\in S^{d-1}\mid \langle b-w,n \rangle=0\}$ for $b\in B, w\in W$. In particular, it vanishes for almost all rotations of $X$.

As in \cite{rataj}, the idea of the proof of Theorem \ref{nonstat} is to apply  \cite[Theorem 2.1]{last}. Define
\begin{equation*}
f_{(B,W)}(z,a)=\mathds{1}_{\{z+aB\subseteq X,z+aW\subseteq \R^2\backslash X\}}\mathds{1}_{\xi_{\partial X}^{-1}(A)}.
\end{equation*}
For a compact set $S$ we shall write $\rho(S)=\inf\{\rho >0\mid S\subseteq B(\rho)\}$. 
Then $f_{(B,W)}(a,z)$ has support in $\partial X \oplus B(r)$ whenever $a\rho(B\cup W)\leq r$.
In this case, \cite[Theorem 2.1 and Corollary 2.5]{last} yields
\begin{equation*}
\int_{\R^d} f_{(B,W)}(z,a) dz = \sum_{m=0}^{d-1} \int_{\partial X}\int_{-r}^{r}t^m f_{(B,W)}(x+tn(x),a)s_m(k(x))dt\Ha^1(dx)
\end{equation*}
where $s_m(k)$ is the $m$th symmetric polynomial in the principal curvatures  $k=(k_1,\dots,k_{d-1})$. In particular, note that $s_1(k)=\tr(II)$.

Before proving Theorem \ref{nonstat}, we state and prove a few technical lemmas for later reference. The first one is concerned with the boundary behavior of $X$ and is an easy consequence of the definition of $r$-regular sets. 

Let 
\begin{equation*}
T^r \partial X=\{(x,\alpha)\in T \partial X \mid \alpha \in T_x \partial X ,|\alpha|< r \}
\end{equation*}
 be the open $r$-disk bundle in the tangent bundle $T\partial X$.  

\begin{lemma}\label{diff}
There is a function $q: T^r \partial X \to \R$ taking $\alpha \in T_x \partial X$ to the signed distance from $x+\alpha$ to $\partial X$ along the line parallel to $n(x)$ with the sign chosen such that $x+\alpha+q(x,\alpha)n(x)\in \partial X$.  The function
\begin{equation*}
\frac{q(x,a\alpha)}{a^2}
\end{equation*}
is uniformly bounded for $x\in \partial X$, $\alpha\in T^\rho_x\partial X$, and $a\in [-\frac{r}{\rho},\frac{r}{\rho}]\backslash \{0\}$. Moreover,
\begin{equation*}
\lim_{a\to 0}\frac{q(x,a\alpha)}{a^2}=-\frac{1}{2}\II_x(\alpha)
\end{equation*}
whenever the right hand side is defined.
\end{lemma}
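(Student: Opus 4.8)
The plan is to regard $q(x,\cdot)$ as the graph function of $\partial X$ over the tangent plane $T_x\partial X$, and to handle the three assertions — existence, the uniform $a^{-2}$ bound, and the quadratic limit — separately. The two osculating balls of Definition~\ref{defreg} will control existence and the bound, while Federer's almost-everywhere differentiability of the Gauss map $n$ will give the limit. Throughout I fix $x\in\partial X$ and work in coordinates with $n(x)=e_d$ and $T_x\partial X=\R^{d-1}\times\{0\}$.

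First I would establish existence, the graph structure, and the key estimate. The inner ball $B_i=B(x-rn(x),r)\subseteq X$ and outer ball $B_o=B(x+rn(x),r)$ with $\indre(B_o)\subseteq\R^d\backslash X$ both meet the vertical line $\{x+\alpha+tn(x)\}$ precisely when $|\alpha|<r$: the point lies in $B_i$ for $t$ in a subinterval of the negative axis and in $\indre(B_o)$ for $t$ in a subinterval of the positive axis, so the segment runs from $X$ into its complement and must cross $\partial X$. This produces $q(x,\alpha)$ with $x+\alpha+q(x,\alpha)n(x)\in\partial X$, and the standard normal-coordinate (tube) description of $r$-regular sets makes the crossing unique over the $r$-disk, so $q$ is a well-defined measurable graph function. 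Since $\partial X$ is squeezed between the upper cap of $B_i$ and the lower cap of $B_o$, which are the graphs $t=\pm(r-\sqrt{r^2-|\alpha|^2})$, I get $|q(x,\alpha)|\le r-\sqrt{r^2-|\alpha|^2}=\frac{|\alpha|^2}{r+\sqrt{r^2-|\alpha|^2}}\le\frac{|\alpha|^2}{r}$. Substituting $\alpha\mapsto a\alpha$ with $|\alpha|<\rho$ and $0<|a|\le r/\rho$ yields $|a^{-2}q(x,a\alpha)|\le |\alpha|^2/r\le\rho^2/r$ uniformly in $x,\alpha,a$, which is the claimed boundedness (and is consistent with $|k_i|\le r^{-1}$).

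For the limit I would work at a point $x$ where $\II_x$, equivalently the differential $dn_x$, exists. Differentiability of the Gauss map there means $n(y)=n(x)+dn_x(\pi_x(y-x))+o(|y-x|)$ along $\partial X$. The outward normal of the graph at $y=x+\alpha+q(x,\alpha)n(x)$ is proportional to $(-\nabla_\alpha q(x,\alpha),1)$, so its tangential part is $-\nabla_\alpha q(x,\alpha)$ up to the normalization $1+O(|\nabla q|^2)$; using $\nabla_\alpha q(x,0)=0$ this converts the above into the first-order relation $\nabla_\alpha q(x,\alpha)=-dn_x(\alpha)+o(|\alpha|)$. Integrating along a ray $s\mapsto su$ with $|u|=1$ then gives $q(x,au)=\int_0^a\nabla_\alpha q(x,su)\cdot u\,ds=-\tfrac12a^2\langle dn_xu,u\rangle+o(a^2)$, and since $\langle dn_xu,u\rangle=\II_x(u)$ this is exactly $q(x,a\alpha)/a^2\to-\tfrac12\II_x(\alpha)$. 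The sphere of radius $r$, whose graph $\sqrt{r^2-|\alpha|^2}-r\approx-|\alpha|^2/2r$ matches $\II=|\alpha|^2/r$, confirms the sign.

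The main obstacle is this last step. Federer's result only furnishes differentiability of $n$ $\Ha^{d-1}$-almost everywhere, and only as a first-order pointwise statement at $x$, whereas I need a genuine second-order Taylor expansion of $q(x,\cdot)$ with error controlled uniformly enough in $s\in[0,a]$ to survive integration. The point is that the $o(|\alpha|)$ in $\nabla_\alpha q(x,\alpha)=-dn_x\alpha+o(|\alpha|)$, encoding only differentiability of $n$ at the single point $x$, does integrate to a genuine $o(a^2)$: for every $\eps>0$ there is $\delta>0$ with $|\nabla_\alpha q(x,su)+dn_x(su)|<\eps s$ for $s<\delta$, so the integrated error is at most $\tfrac12\eps a^2$. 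Verifying that the graph-normal/Gauss-map dictionary transports Federer's pointwise differentiability into this gradient estimate is the delicate part; the remainder is bookkeeping with the osculating balls.
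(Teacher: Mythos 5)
Your existence and boundedness arguments match the paper's: the crossing of $L_\alpha=[x+\alpha-rn(x),x+\alpha+rn(x)]$ forced by the two osculating balls, and the cap estimate $|q(x,\alpha)|\leq r-\sqrt{r^2-|\alpha|^2}$, which after rescaling gives the uniform bound exactly as in the paper. For the limit you take a genuinely different route. The paper considers the curve $\gamma(a)=x+a\alpha+q(x,a\alpha)n(x)$ in $\partial X$, writes $q(x,a\alpha)=\langle n(x),\gamma(a)-x\rangle$, and applies l'H\^{o}pital's rule together with the tangency relation $\langle n(\gamma(a)),\gamma'(a)\rangle=0$ to convert Federer differentiability of $n$ at $x$ directly into $\lim_{a\to 0}a^{-2}q(x,a\alpha)=-\tfrac{1}{2}\II_x(\alpha)$. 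You instead translate the differentiability of the Gauss map into the gradient estimate $\nabla_\alpha q(x,\alpha)=-dn_x(\alpha)+o(|\alpha|)$ via the graph-normal dictionary $n(y)\propto(-\nabla_\alpha q,1)$, and integrate along the ray, correctly observing that a pointwise $o(|\alpha|)$ on the gradient integrates to $o(a^2)$. Both routes rest on the same two ingredients (Federer's pointwise differentiability at $x$ and the fact that $q(x,\cdot)$ is $C^1$ near $0$, which holds since $\partial X$ is a $C^1$ manifold and the normals at graph points near $x$ stay non-horizontal); your version makes the quadratic Taylor structure more explicit, while the paper's l'H\^{o}pital argument avoids ever writing down $\nabla_\alpha q$. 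Either is acceptable.

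The one soft spot is uniqueness of the crossing, which you dispatch by appeal to ``the standard normal-coordinate (tube) description of $r$-regular sets.'' As stated this does not suffice: the tube homeomorphism $\partial X\times(-r,r)\to\partial X\oplus \indre(B(r))$ concerns \emph{normal} segments, whereas $L_\alpha$ for $\alpha\neq 0$ is a segment in the fixed direction $n(x)$, not normal to $\partial X$ at its crossing points, so the tube structure alone does not rule out two intersections of $L_\alpha$ with $\partial X$ over the open $r$-disk (a vertical tangent plane at a graph point is not by itself excluded by it). The paper closes precisely this point with a dedicated argument: taking $\alpha_0$ of minimal norm for which $L_{\alpha_0}$ meets $\partial X$ twice, one of the two points, say $p_1$, must have $n(p_1)=-\alpha_0/|\alpha_0|$, and then the outer ball at $p_1$ contains $x$ in its interior, contradicting $x\in X$. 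You should either reproduce such an argument or cite a reference that explicitly gives the graph property of $\partial X$ over the full $r$-disk of $T_x\partial X$; everything else in your proposal is sound.
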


\begin{proof}
Let $x\in \partial X $ and let $B_i=x-rn(x)+B(r)$ and $B_o=x+rn(x)+B(r)$ denote the inner and outer ball, respectively, as in the definition of $r$-regular sets. Then  for  $\alpha \in T^r_x\partial X$, the line segment $L_\alpha=[x+\alpha-rn,x+\alpha+rn]$ contains a boundary point $y_\alpha=x+\alpha +q(x,\alpha)n$,  as it hits both $B_i $ and $\indre(B_o)$. This point must be unique, otherwise choose $\alpha_0$ with $|\alpha_0|$ minimal such that $L_{\alpha_0}$ contains two different points $p_1$ and $p_2$. One of them, say $p_1$, must have a small neighborhood not containing any $y_\alpha$ with $|\alpha|<|\alpha_0|$ and thus the normal vector $n(p_1)$ must be exactly $-\frac{\alpha_0}{|\alpha_0|}$. But then the outer ball at $p_1$ must contain $x$, which is a contradiction. Thus $q$ is well-defined.

Moreover, $a^{-2}|q(x,a\alpha)|$ is bounded by $a^{-2}(r-\sqrt{r^2-|a\alpha|^2})$ and this is bounded for $|\alpha|\leq \rho$ and $0\neq |a|\leq \frac{r}{\rho}$. 
 
It remains to determine the limit $\lim_{a\to 0}{a^{-2}}{q(x,a\alpha)}$. 
Let $x$ be a point where $n$ is differentiable. Then $\gamma(a)= x+ a\alpha + q(x,a\alpha)n(x)$ is a $C^1$ curve in $\partial X$ with $\gamma(0)=x$ and $\gamma'(a)=\alpha$. Moreover $q(x,a\alpha)=\langle n(x),\gamma(a)-x \rangle $. By l'H\^{o}pital's rule, it is enough to show that
\begin{equation*}
\lim_{a\to 0}\frac{\langle n(x),\gamma'(a) \rangle}{2a} = -\frac{1}{2}II_x(\alpha).
\end{equation*}
But this follows because
\begin{equation*}
\lim_{a\to 0}\frac{\langle n(x),\gamma'(a) \rangle}{2a}=
\lim_{a\to 0}\frac{\langle n(\gamma(0))-n(\gamma(a)),\gamma'(a) \rangle}{2a}=-\frac{1}{2}dn_x(\alpha) = -\frac{1}{2}II_x(\alpha).
\end{equation*}\qed
\end{proof}

For $x\in \partial X$ and $s\in \R^d$ with $a|s|\leq r$, observe that for $t\in [-r,r]$,
\begin{equation*}
x+tn(x)+as \in X \textrm{ if and only if } t \leq -a\langle s,n(x)\rangle + q(x, as-\langle as,n(x)\rangle n(x)).
\end{equation*}
Thus we write
\begin{equation*}
t(as)= -a\langle s,n(x)\rangle + q(x, as-a\langle s,n(x)\rangle n(x)).
\end{equation*}
For a finite set $S$, let 
\begin{align*}
t_{-}(aS)&=\max\{t(as)\mid s\in S\}\\
t_{+}(aS)&=\min\{t(as)\mid s\in S\}. 
\end{align*}
With this notation, we obtain for $a\rho(B\cup W)<r$:
\begin{align}\label{intr}
a^{-2}&\sum_{m=0}^{d-1} \int_{\partial X}\int_{-r}^{r}t^m f_{(B,W)}(x+tn,a)s_m(k(x))dt\Ha^{d-1}(dx)\\
={}&a^{-2}\sum_{m=0}^{d-1} \int_{\partial X\cap A}\frac{1}{m+1}({t_{+}(aB)}^{m+1}-{t_{-}(aW)}^{m+1})\tau_{(B,W)}  s_m(k)dtd\Ha^{d-1}\nonumber
\end{align}
where
\begin{equation*}
\tau_{(B,W)}(x,a)=\mathds{1}_{\{ {t_{+}(aB)}>{t_{-}(aW)}\}}.
\end{equation*}

The indicator function $\tau_{(B,W)}(x,a)$ may not equal $\delta_{(B,W)}(n(x))$ everywhere, but the following lemma ensures that they do not differ too much.

\begin{lemma}\label{hbegr}
Let $B$ and $W$ be two finite non-empty sets. There are constants $C$ and $\eps$ depending only on $\rho:=\rho(B\cup W)$, such that 
\begin{equation*}
| h(B\oplus \check{W},n(x))||\tau_{(B,W)}(x,a)-\delta_{(B,W)}(n(x))|\leq Ca
\end{equation*}
whenever $a<\eps$.
\end{lemma}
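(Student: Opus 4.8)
The plan is to exploit that both $\tau_{(B,W)}(x,a)$ and $\delta_{(B,W)}(n(x))$ are $\{0,1\}$-valued, so their difference vanishes identically except at those $x$ where the two indicators disagree. Hence it suffices to show that disagreement forces $|h(B\oplus\check{W},n(x))|$ to be of order $a$; the factor $|h(B\oplus\check{W},n(x))|$ in front is precisely what converts this ``small window'' into the clean bound $Ca$. Concretely, I would compare the threshold defining $\tau$ with the threshold $h(B\oplus\check{W},n)=0$ defining $\delta$ and show that they differ by $O(a)$.

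First I would expand $t(as)$ using Lemma~\ref{diff}. Writing $\alpha_s = as - a\langle s,n(x)\rangle n(x)$, which is a tangent vector of norm at most $a|s|\le a\rho$, the lemma provides a constant $M$ (depending only on $r$ and $\rho$) with $|q(x,\alpha_s)|\le Ma^2$ for all $x\in\partial X$, all $s\in B\cup W$, and all $0<a<r/\rho$. Thus
\begin{equation*}
t(as) = -a\langle s,n(x)\rangle + e_s,\qquad |e_s|\le Ma^2 .
\end{equation*}
Taking the minimum over $b\in B$ and the maximum over $w\in W$, and using that a uniformly bounded perturbation commutes with $\min$ and $\max$ up to its bound, I would obtain
\begin{equation*}
t_+(aB)-t_-(aW) = -a\,h(B\oplus\check{W},n(x)) + e,\qquad |e|\le 2Ma^2 ,
\end{equation*}
where I have used $h(B\oplus\check{W},n)=h(B,n)+h(\check{W},n)$ together with $\min_b(-\langle b,n\rangle)=-h(B,n)$ and $\max_w(-\langle w,n\rangle)=h(\check{W},n)$.

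With this expansion the two indicators become thresholds on the single quantity $h:=h(B\oplus\check{W},n(x))$: since $a>0$, we have $\tau_{(B,W)}(x,a)=1$ exactly when $h< e/a$, with $|e/a|\le 2Ma$, whereas $\delta_{(B,W)}(n(x))=1$ exactly when $h<0$. If the two indicators disagree then $h$ must lie between $0$ and $e/a$, so $|h|\le 2Ma$; if they agree the product is zero. In either case $|h|\,|\tau_{(B,W)}(x,a)-\delta_{(B,W)}(n(x))|\le 2Ma$, so the claim holds with $C=2M$ and $\eps=r/\rho$.

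The only genuine subtlety, and the step I would be most careful with, is the interchange of $\min$ and $\max$ with the error terms: what makes it work is that Lemma~\ref{diff} bounds $q(x,a\alpha)/a^2$ \emph{uniformly} in $x$ and in $\alpha$ over the compact range $|\alpha|\le\rho$, rather than merely pointwise. This uniformity guarantees that the error $e$ is $O(a^2)$ with a constant independent of $x$, so that after dividing by $a$ it produces an $O(a)$ shift of the threshold uniformly along $\partial X$. (Here $r$ is the fixed regularity radius of $X$, so $M$ and $\eps$ indeed depend only on $\rho$.)
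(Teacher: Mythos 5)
Your proof is correct and follows essentially the same route as the paper: both arguments rest on the uniform $O(a^2)$ bound for $q$ from Lemma~\ref{diff}, which forces $|h(B\oplus\check{W},n)|\leq Ca$ wherever the two indicators disagree. The only difference is presentational — the paper splits into the two disagreement cases and picks extremizing points $b\in B_+(n)$, $w\in W_-(n)$, whereas you package both cases into the single uniform expansion $t_+(aB)-t_-(aW)=-a\,h(B\oplus\check{W},n)+O(a^2)$ and compare thresholds, which is a clean equivalent formulation.
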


\begin{proof}
On the set $\{\tau_{(B,W)}(x,a)-\delta_{(B,W)}(x)\neq 0\}$,  either  $t_{-}({aW})\geq t_+(aB)$ and $h(B\oplus \check{W},n(x))<0$ or $ t_{-}({aW})<t_+(aB)$ and $h(B \oplus \check{W},n(x))\geq 0$. 

In the first case, $t_{-}({aW})\geq t_+(aB)$ and $h(B\oplus \check{W},n)<0$ implies that
\begin{equation*}
0\leq t_{-}(a{W})- t_+(aB)= -a\langle w,n\rangle +a\langle b,n\rangle + q(x,a\alpha_1)-q(x,a\alpha_2)
\end{equation*}
for some choice of $w\in W$ and $b\in B$ and $\alpha_1,\alpha_2\in T^{\rho}_x\partial X$. Thus
\begin{align*}
0&\leq -ah(\check{W},n)-ah(B,n) \leq a\langle w,n\rangle -a\langle b,n\rangle\\ 
&\leq q(x,a\alpha_1)-q(x,a\alpha_2)\leq 2\su\{|q(x,a\alpha)| ,\, |\alpha|\leq \rho \}. 
\end{align*}
By Lemma \ref{diff}, the latter is bounded by $Ca^2$ for some constant $C$ and $a$ sufficiently small.

In the second case, let  $b\in B_+(n)$ and $w\in W_-(n)$. The claim then follows from the inequality
\begin{equation*}
 0\geq t_{-}(a{W})- t_+(aB)\geq t(aw)-t(ab)= h(B \oplus \check{W},n) + q(x,a\alpha_1)-q(x,a\alpha_2).
\end{equation*}\qed
\end{proof}

It may be that $t_{\pm}(S)\neq t(s^{\pm})$, where $s^{\pm}$ are the functions from Lemma~\ref{pm}. Thus we need the following:

\begin{lemma}\label{t'}
Let $S$ be a finite set. For each $x$, there is an $\eps>0$ such that for all $a\leq \eps$, there are  $s_\pm\in {S}_\pm(n(x)) $ with
\begin{align}
\begin{split}\label{ter}
t_{+}(aS)&=t(as_+)=-ah(S,n)+q(x,a\alpha_1)\\
t_{-}(aS)&=t(as_-)=ah(\check{S},n)+q(x,a\alpha_2)
\end{split}
\end{align}
for some $|\alpha_1| ,|\alpha_2|\leq \rho(S)$. Moreover, there is a constant $M$ depending only on $\rho(S)$ such that
\begin{equation*}
|t_{+}(aS)+ah(S,n)|, |t_{-}(aS)-ah(\check{S},n)|\leq a^2M.
\end{equation*} 
There is also a constant $M'$ not depending on $x$ such that 
\begin{equation*}
\nu(\{R \in SO(d)\mid  \exists s_{\pm}\in ({RS})_{\pm}(n(x))\text{ such that } t_{\pm}(aRS)\neq t(as_{\pm})\})\leq M'a
\end{equation*}
where $\nu$ denotes the Haar measure on $SO(d)$. 

If $B,W\subseteq \R^d$ are two finite non-empty sets, there are constants $M''$ and $\eps'>0$ depending only on $\rho(B\cup W)$, such that 
\begin{equation*}
\nu(R\in SO(d)\mid \tau_{(RB,RW)}(x,a)\neq \delta_{(RB,RW)}(n),\, h(RB\oplus R\check{W},n)\neq 0) \leq M''a
\end{equation*}
whenever $a<\eps'$.
\end{lemma}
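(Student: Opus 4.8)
The plan is to dispatch the four claims in order, proving the first two by a direct gap argument and the last two by reducing the set of ``bad'' rotations to thin slabs in $SO(d)$ whose Haar measure can be estimated by rotation invariance.

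\textbf{First and second claims.} First I would fix $x\in\partial X$, set $n=n(x)$, and record that every $s\in S$ has orthogonal projection $\alpha_s=s-\langle s,n\rangle n\in T_x\partial X$ with $|\alpha_s|\le|s|\le\rho(S)$, so Lemma~\ref{diff} supplies a constant $M>0$ depending only on $\rho(S)$ with $|q(x,a\alpha_s)|\le Ma^2$ for all small $a$. Splitting $t(as)=-a\langle s,n\rangle+q(x,a\alpha_s)$ into its linear and quadratic parts, points of $S_+(n)$ give $t(as)\le-ah(S,n)+Ma^2$, whereas any $s\notin S_+(n)$ has $\langle s,n\rangle\le h(S,n)-\gamma$ for the gap $\gamma=\min\{h(S,n)-\langle s,n\rangle:s\in S\setminus S_+(n)\}>0$ (taking $\gamma=+\infty$ if $S=S_+(n)$), so $t(as)\ge-ah(S,n)+a\gamma-Ma^2$. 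Hence for $a<\gamma/(2M)$ the minimum defining $t_+(aS)$ is attained inside $S_+(n)$; choosing $s_+\in S_+(n)$ that realizes it yields the first line of \eqref{ter}, and the symmetric argument with $S_-(n)$ and the maximum yields the second. The estimate $|t_+(aS)+ah(S,n)|=|q(x,a\alpha_1)|\le Ma^2$, and likewise for $t_-$, is then immediate, so $M$ depends only on $\rho(S)$.

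\textbf{Third claim.} The threshold $a<\gamma/(2M)$ is useless uniformly in $R$, since the gap of $RS$ in the direction $n$ degenerates as $R$ varies, so instead I would show that a bad rotation forces two points of $RS$ to be close in height. If $t_+(aRS)\neq t(as_+)$ for some $s_+\in(RS)_+(n)$, let $s^*$ realize the minimum defining $t_+(aRS)$; then $s^*\neq s_+$ and $a(h(RS,n)-\langle s^*,n\rangle)<q(x,a\alpha_{s_+})-q(x,a\alpha_{s^*})\le2Ma^2$, whence $0\le h(RS,n)-\langle s^*,n\rangle<2Ma$. Writing $s_+=Rs_i$ and $s^*=Rs_j$ with $s_i\neq s_j$, this reads $|\langle R(s_i-s_j),n\rangle|<2Ma$, and the same holds for $t_-$. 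Thus the bad set lies in $\bigcup_{i\neq j}\{R\in SO(d):|\langle R(s_i-s_j),n\rangle|<2Ma\}$. I would then estimate each slab by the rotation invariance of Haar measure: the pushforward of $\nu$ under $R\mapsto Rv$ is the uniform probability measure on $|v|S^{d-1}$, so $\nu(\{R:|\langle Rv,n\rangle|<\delta\})$ is the uniform measure of a band of half-width $\delta/|v|$ about the equator orthogonal to $n$, which is $O(\delta)$ uniformly in $n\in S^{d-1}$ (the density of $\langle u,n\rangle$ is bounded near $0$ for $d\ge2$). Taking $v=s_i-s_j$, $\delta=2Ma$ and summing over the finitely many pairs gives the bound $M'a$ with $M'$ independent of $x$.

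\textbf{Fourth claim, and the main obstacle.} The last claim combines the slab estimate with Lemma~\ref{hbegr}: on $\{\tau_{(RB,RW)}(x,a)\neq\delta_{(RB,RW)}(n)\}$ that lemma forces $|h(RB\oplus R\check W,n)|\le Ca$ for $a<\eps$, so intersecting with $\{h(RB\oplus R\check W,n)\neq0\}$ confines us to $\{0<|h(RB\oplus R\check W,n)|\le Ca\}$; writing $h(RB\oplus R\check W,n)=\max_{b\in B,\,w\in W}\langle R(b-w),n\rangle$, at such $R$ the maximizing pair $(b_0,w_0)$ has $b_0\neq w_0$ and $0<|\langle R(b_0-w_0),n\rangle|\le Ca$, so the bad set lies in $\bigcup_{b\neq w}\{R:|\langle R(b-w),n\rangle|\le Ca\}$, which the slab estimate with $\delta=Ca$ bounds by $M''a$, with $M''$ and $\eps'$ controlled by $\rho(B\cup W)$ through $C$ and $\eps$. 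The step I expect to require the most care is the third claim: the pointwise threshold $a<\gamma/(2M)$ blows up exactly along the Haar-null set where $(RS)_\pm(n)$ is not a singleton, and the substance of the statement is that the offending rotations form only an $O(a)$-thickening of this set; obtaining the slab measure \emph{uniformly} in the fixed direction $n$ is precisely what makes $M'$ independent of $x$.
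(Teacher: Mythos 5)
Your proposal is correct and follows essentially the same route as the paper: the first two claims via the comparison of the linear gap $a\gamma$ against the $O(a^2)$ correction supplied by Lemma~\ref{diff} (the paper phrases the same mechanism through the inequality $0\le a\langle s-s^-,n\rangle\le M_1a^2$ and finiteness of $S$), and the two measure bounds by including the bad rotations in finitely many slabs $\{R\in SO(d):|\langle R(s_1-s_2),n\rangle|\le Ca\}$ (via Lemma~\ref{hbegr} for the fourth claim) and pushing the Haar measure forward to a spherical band of measure $O(a)$ uniformly in $n$, exactly as in the paper's display \eqref{numaal}. One small point: the paper obtains $|t_{+}(aS)+ah(S,n)|,\,|t_{-}(aS)-ah(\check S,n)|\le Ma^2$ for \emph{every} admissible $a$ (not merely $a<\gamma/(2M)$), which is the uniformity in $x$ that the later dominated-convergence arguments rely on, and your own threshold-free inequality $0\le h(S,n)-\langle s^{*},n\rangle<2Ma$ from the third step, applied with $R=\mathrm{id}$, gives $|t_{+}(aS)+ah(S,n)|\le 3Ma^2$ without any restriction tied to the gap, so the conditioning in your second claim can simply be dropped.
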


\begin{proof}
Suppose there is an $s\in S$ with $t_{-}(aS)=t(as)\geq t(as^{-})$. This implies that $\langle s^{-},n\rangle \leq \langle s,n\rangle$ and thus
\begin{align*}
0&\leq t(as)-t(as^{-}) \\
&= -a\langle s ,n\rangle +a\langle   s^{-},n\rangle +q(x,a\alpha_1)-q(x,a\alpha_2) \\ 
&\leq q(x,a\alpha_1)-q(x,a\alpha_2)
\end{align*}
with $|\alpha_1|,|\alpha_2|\leq \rho(S)$. 
It follows that 
\begin{equation}\label{seq}
0\leq  a\langle (s - s^{-}),n\rangle \leq q(x,a\alpha_1)-q(x,a\alpha_2) \leq M_1a^2.  
\end{equation}
If this holds for arbitrarily small $a$, $\langle (s - s^{-}),n\rangle$ must equal 0 and hence $-h(\check{S},n) = \langle s ,n\rangle$. The first claim now follows by the finiteness of $S$. 

The second claim follows from \eqref{seq} because
\begin{equation*}
| t(as)+\langle as^-,n \rangle|\leq | t(as)+\langle as,n \rangle| +|\langle a(s - s^{-}),n\rangle| \leq Ma^2
\end{equation*}
for some $M$.

Furthermore,  by \eqref{seq}
\begin{align*}
\{ R \in SO(d)&\mid \exists s\in ({RS})_{-}(n) : t_{-}(aRS)\neq t(as) \}
\\ &\subseteq 
\{ R \in SO(d)\mid \exists s_1\neq s_2\in S : \langle (Rs_1-Rs_2), n \rangle  \leq M_1 a\} 
\end{align*}
and hence
\begin{align}\nonumber
\nu(R \in SO(d)&\mid \exists s\in ({RS})_{-}(n) : t_{-}(aRS)\neq t(as) )\\\nonumber
&\leq 
\nu( R \in SO(d)\mid \exists s_1\neq s_2\in S: \langle R(s_1-s_2), n \rangle  \leq M_1 a)\\ \label{numaal}
&\leq |S|^2\Ha^{d-1}(u\in S^{d-1}\mid \langle u,n \rangle \leq M_2a)\\& \leq M'a\nonumber
\end{align}
where $|S|$ is the cardinality of $S$ and $M_1$ and $M_2$ are some constants.

The case of $S_+$ is similar.

For the last claim, Lemma \ref{hbegr} shows that 
\begin{align*}
\{R\in {}& SO(d)\mid \tau_{(RB,RW)}\neq \delta_{(RB,RW)},\, h(RB\oplus R\check{W},n)\neq 0\}\\
 &\subseteq \{ R\in  SO(d)\mid | h(B\oplus\check{W}, R^{-1} n)|  \in (0,Ca] \} \\
 &\subseteq \{R\in  SO(d)\mid  \exists b\in B,w\in W, b \neq w:|\langle b-w, R^{-1} n \rangle|\leq Ca\}. 
\end{align*}
The claim follows as in \eqref{numaal}. \qed
\end{proof}

We are finally ready to prove the main theorem of this section:
\begin{proof}[Theorem \ref{nonstat}]
We must compute the limit of \eqref{intr} when $a$ tends to zero. 

First consider the terms with $m\geq 1$. 
By Lemma \ref{diff}, the terms  
\begin{equation*}
a^{-2}t(as)^{m+1}=a^{-2}(-a\langle s,n\rangle + q(x,a\alpha))^{m+1}
\end{equation*}
are bounded by some uniform constant for all $s\in B\cup W$. When $m+1>2$ they all converge to zero pointwise. Hence by  Lebesgue's theorem of dominated convergence,
\begin{equation*}
\lim_{a\to 0}a^{-2} \int_{\partial X\cap A}\frac{1}{m+1} (t_+(aB)^{m+1}-t_-(aW)^{m+1})\tau_{(B,W)} s_m(k)dt d\Ha^{d-1} =0.
\end{equation*}

For $m=1$, Lebesgue's theorem yields
\begin{align}\nonumber 
\lim_{a\to 0} {}&a^{-2}\int_{\partial X\cap A}\frac{1}{2}(t_+(aB)^{2}-t_-(aW)^{ 2})\tau_{(B,W)}s_1(k)d\Ha^1\\ 
\label{l=1} &= \int_{\partial X\cap A} \lim_{a\to 0} a^{-2}\frac{1}{2}(t_+(aB)^{2}-t_-(aW)^{ 2})\tau_{(B,W)}s_1(k)d\Ha^1\\
&=\int_{\partial X\cap A} \lim_{a\to 0}\frac{1}{2}(h(B,n)^{2}-h(\check{W},n)^{ 2})\tau_{(B,W)}s_1(k)d\Ha^1\nonumber\\
&=\int_{\partial X\cap A} \frac{1}{2}(h(B,n)^{2}-h(\check{W},n)^{ 2})\delta_{(B,W)}(n)s_1(k)d\Ha^1\nonumber
\end{align}
where the second equality uses the first part of Lemma \ref{t'} and the last equality follows since
\begin{equation*}
|h(B,n)^{2}-h(\check{W},n)^{ 2})(\tau_{(B,W)}(x,a)-\delta_{(B,W)}(n))|\leq \rho(B\oplus \check{W}) C a 
\end{equation*}
by Lemma \ref{hbegr}.

It remains to handle the $m=0$ term. Consider 
\begin{align*}
\lim_{a \to 0}  \bigg({}&\int_{\partial X\cap A} a^{-2}(t_+(aB)-t_-(aW))\tau_{(B,W)} d\Ha^{d-1}\\
&+a^{-1}\int_{\partial X\cap A}h(B\oplus\check{W},n)\delta_{(B,W)}(n) d\Ha^{d-1}\bigg)\\
={}&\lim_{a \to 0}\bigg(  \int_{\partial X\cap A}a^{-2} (t_+(aB)-t_-(aW)+ah(B\oplus\check{W},n))\delta_{(B,W)}(n) d\Ha^{d-1} \\&+ \int_{\partial X\cap A} a^{-2} (t_+(aB)-t_-(aW))(\delta_{(B,W)}(n)-\tau_{(B,W)}(x,a)) d\Ha^{d-1}\bigg).
\end{align*}
The integrand in the last line is bounded by \eqref{ter} in Lemma \ref{hbegr}, so we may apply Lebesgue's theorem. Write
\begin{equation*}
\tau_{(B,W)}(x,a)=\tau_{(B,W)}(x,a)(\mathds{1}_{\{h(B\oplus \check{W},n)>0\}} + \mathds{1}_{\{h(B\oplus \check{W},n)=0\}} +\delta_{(B,W)}(x)).
\end{equation*} 
The first term converges to zero and the last term converges to $\delta_{(B,W)}(x)$. On the set $\{h(B\oplus \check{W},n)=0\}$, 
\begin{align*}
a^{-2}{}&(t_+(aB)-t_-(aW)) \tau_{(B,W)}(x,a)\\& = a^{-2}((t_+(aB)+ah(B,n))-(t_-(aW)-ah(\check{W},n)))^+ 
\end{align*}
so the second integral converges to 
\begin{equation}\label{limtec}
-\frac{1}{2}\int_{\partial X\cap A} (\II^+(B)-\II^-(W))^+\mathds{1}_{\{h(B\oplus \check{W},n)=0\}} d\Ha^{d-1}.
\end{equation}
This follows from the first part of Lemma \ref{t'} and Lemma \ref{diff} because 
\begin{align*}
\lim_{a \to 0} a^{-2}(t_+(aB)+ah(B,n)){}&=\lim_{a \to 0} a^{-2}\min\{t(ab)+a\langle b,n\rangle \mid b\in {B}_+(n(x))\}\\
&=\min\{\lim_{a \to 0} a^{-2}(t(ab)+a\langle b,n\rangle) \mid b\in {B}_+(n(x))\}\\
&=\min\big\{-\tfrac{1}{2}\II_x(b)\mid  b\in {B}_+(n(x))\big\} \\
&=-\tfrac{1}{2}\II_x^+(B)
\end{align*}
whenever $\II_x$ is defined, and the $W$ terms are similar.

Finally,  
\begin{align*}
 a^{-2}|t_+(aB)+ah(B,n)|,\, a^{-2}|t_-(aW)-ah(\check{W},n)|
\end{align*}
are uniformly bounded by Lemma \ref{t'}, so by Lebesgue's theorem
\begin{align}\nonumber
\lim_{a \to 0} {}&a^{-2} \int_{\partial X \cap A} (t_+(aB)-t_-(aW)+a(h(B,n)+h(\check{W},n))) \delta_{(B,W)}(n)d\Ha^{d-1}\\ 
={}& \int_{\partial X\cap A}  \frac{1}{2}(\II^+(W)-\II^-({B})) \delta_{(B,W)}(n) d\Ha^{d-1}. \label{l=0}
\end{align}
The claim now follows by combining \eqref{l=1}, \eqref{limtec}, and \eqref{l=0}.
\qed
\end{proof}

\section{Hit-or-miss transforms in a rotation invariant setting}\label{rotinv}
In this section we prove a version of Theorem \ref{nonstat} where a uniform random rotation $R\in SO(d)$ is applied to the sets $B,W$. For this we let $SO(d)$ be the group of rotations of $\R^d$ and $\nu_d$ the Haar measure on $SO(d)$.

\begin{theorem}\label{isotropy}
Let $X\subseteq \R^d$ be an $r$-regular set, $A\subseteq \R^d$ a bounded Borel set, and $B,W\subseteq \R^d$ two non-empty finite sets. 
Then
\begin{align*}
\lim_{a\to 0 } \bigg( {}& a^{-2}\int_{SO(d)}\Ha^d(\xi_{\partial X}^{-1}(A)\cap (X\ominus aR\check{B})\backslash (X\oplus aR\check{W}))\nu_d(dR)
\\&-a^{-1}\Ha^{d-1}({\partial X\cap A})\int_{S^{d-1}}(-h(B\oplus \check{W},n))^+dn\bigg)
\\={}&\frac{1}{2}\int_{\partial X\cap A}\int_{SO(d)}(Q^+(RB)-Q^-(R\check{W}))\delta_{(RB,RW)}(n)\nu_d(dR)d\Ha^{d-1}.
\end{align*}
If $X$ is a smooth manifold, then the convergence is $O(a)$.
\end{theorem}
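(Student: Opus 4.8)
The plan is to reduce Theorem \ref{isotropy} to Theorem \ref{nonstat} applied one rotation at a time, and then to integrate the resulting pointwise limit over $SO(d)$ against $\nu_d$. For each fixed $R\in SO(d)$, I apply Theorem \ref{nonstat} with the pair $(B,W)$ replaced by $(RB,RW)$; the hit-or-miss transform occurring there is $\{z:z+aRB\subseteq X,\ z+aRW\subseteq\R^d\backslash X\}$, which is precisely $(X\ominus aR\check B)\backslash(X\oplus aR\check W)$. Writing $G(R,a)$ for the bracketed difference
\begin{multline*}
a^{-2}\Ha^d\big(\xi_{\partial X}^{-1}(A)\cap(X\ominus aR\check B)\backslash(X\oplus aR\check W)\big)\\
-a^{-1}\int_{\partial X\cap A}(-h(RB\oplus R\check W,n))^+d\Ha^{d-1},
\end{multline*}
Theorem \ref{nonstat} gives that $g(R):=\lim_{a\to0}G(R,a)$ exists and equals the sum of \eqref{basisterm} and \eqref{extra} evaluated for the pair $(RB,RW)$. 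The statement then follows once I (i) interchange $\lim_{a\to0}$ with $\int_{SO(d)}\cdots\,\nu_d(dR)$, (ii) rewrite the subtracted first-order term using rotation invariance, and (iii) show that the integral of \eqref{extra} over $SO(d)$ vanishes.

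For (ii) I apply Fubini (the integrands are jointly measurable in $(R,x)$) to move $\int_{SO(d)}$ inside the surface integral, and then use $h(RB\oplus R\check W,n)=h(B\oplus\check W,R^{-1}n)$ together with the fact that, for each fixed $n$, the pushforward of $\nu_d$ under $R\mapsto R^{-1}n$ is the rotation-invariant measure on $S^{d-1}$ appearing in the statement. Hence $\int_{SO(d)}(-h(RB\oplus R\check W,n))^+\nu_d(dR)$ equals the $n$-independent constant $c:=\int_{S^{d-1}}(-h(B\oplus\check W,u))^+du$, the very integral occurring in the statement, so that the first-order term becomes $c\,\Ha^{d-1}(\partial X\cap A)$, exactly as subtracted. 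For (iii), Tonelli applied to the nonnegative integrand of \eqref{extra} reduces matters to showing that for $\Ha^{d-1}$-a.e.\ $x$ the inner integral over $R$ vanishes. But $\{R:h(RB\oplus R\check W,n(x))=0\}\subseteq\bigcup_{b\in B,\,w\in W}\{R:\langle b-w,R^{-1}n(x)\rangle=0\}$ is a finite union of preimages of great subspheres, hence $\nu_d$-null by the same pushforward identity (here $b\ne w$, since otherwise the transform is empty), so the integrated extra term is zero.

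The main obstacle is step (i): justifying dominated convergence in the variable $R$. The crucial point is that the estimates driving the proof of Theorem \ref{nonstat} — Lemmas \ref{diff}, \ref{hbegr} and \ref{t'} — depend on $B,W$ only through $\rho(B\cup W)$, which is rotation invariant because $|Rs|=|s|$. Reading these off the proof of Theorem \ref{nonstat} (the cancellation between the $m=0$ term of \eqref{intr} and the subtracted $a^{-1}$ integral, the control $|t_\pm(aS)+ah(\cdot,n)|\le Ma^2$ from \eqref{ter}, and the curvature bound $s_m(k)\le\binom{d-1}{m}r^{-m}$), one finds that $|G(R,a)|$ is bounded, for all sufficiently small $a$, by a constant depending only on $\rho(B\cup W)$, $r$, $d$ and $\Ha^{d-1}(\partial X\cap A)$, uniformly in $R$. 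Since $SO(d)$ is compact and $\nu_d$ finite, this constant is $\nu_d$-integrable, so Lebesgue's dominated convergence theorem permits the exchange in (i). Combining with (ii) and (iii), the limit of the integral equals $\int_{SO(d)}g(R)\,\nu_d(dR)$, which by (iii) reduces to the $SO(d)$-integral of \eqref{basisterm} for $(RB,RW)$, i.e.\ the asserted right-hand side.

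Finally, for smooth $X$ the normal field $n$ is everywhere differentiable, so Taylor's theorem upgrades the pointwise limit of Lemma \ref{diff} to a uniform estimate $q(x,a\alpha)=-\tfrac12\II_x(\alpha)a^2+O(a^3)$. Tracing this through the proof of Theorem \ref{nonstat} turns each application of dominated convergence there into an $O(a)$ error bound, while the contribution of the rotations near $\{h(RB\oplus R\check W,n)=0\}$ is $O(a)$ by the last estimate of Lemma \ref{t'}. As a uniform $O(a)$ bound is preserved under the finite Haar integral, the convergence in Theorem \ref{isotropy} is $O(a)$.
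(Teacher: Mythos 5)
Your proposal follows essentially the same route as the paper's proof: the paper likewise reduces to (the proof of) Theorem \ref{nonstat} applied to $(RB,RW)$, justifies exchanging $\lim_{a\to 0}$ with $\int_{SO(d)}\cdot\,\nu_d(dR)$ by noting that all bounding constants depend on $B,W$ only through the rotation-invariant quantity $\rho(B\cup W)=\rho(RB\cup RW)$, computes the subtracted first-order term by Tonelli and rotation invariance of the spherical integral, and shows the analogue of \eqref{extra} vanishes after rotation averaging (the paper splits $\{h(RB\oplus R\check{W},n)=0\}$ into the null set $E(B\cup W)$ and a part where $p_{RB}^+(n)=p_{RW}^-(n)$ forces the integrand to vanish, which for $B\cap W=\emptyset$ coincides with your null-set argument). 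Your $O(a)$ sketch for smooth $X$ also identifies the paper's two mechanisms --- a uniform third-order Taylor bound for $q$ and exceptional rotation sets of $\nu_d$-measure $O(a)$ carrying bounded integrands --- though the paper's bookkeeping additionally uses the second part of Lemma \ref{t'} (the set $T$ of rotations where $t_{\pm}(aRS)\neq t(ap_{RS}^{\pm}(n))$, i.e.\ support-point switching), not only the $\tau\neq\delta$ estimate you cite.
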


For simplicity, we write
\begin{equation*}
I= a^{-2}\int_{SO(d)}\Ha^d(\xi_{\partial X}^{-1}(A)\cap (X\ominus aR\check{B})\backslash (X\oplus aR\check{W}))\nu_d(dR)
\end{equation*}
in the following. 

For a finite set $S$, let 
\begin{equation*}
D(S)=S^{d-1}\cap \bigcup_{s_1,s_2\in S} \{n\in \R^d \mid \langle s_1,n \rangle=\langle s_2,n \rangle\}.
\end{equation*}
Then $D(S)$ has $\Ha^{d-1}$-measure zero in $S^{d-1}$. 

Whenever $n\notin D(S)$,  the two sets ${S}_\pm(n)$ contain exactly one point each.
Thus we may define $p_S^+,p_S^-:S^{d-1} \to S$ to be the unique functions such that $p_S^{\pm}(n)\in {S}_{\pm}(n)$ for $n\in S^{d-1}\backslash D(S)$ and $p_S^{\pm}(n)=0$ otherwise. These satisfy $p_S^\pm(n(x))=s^\pm(x)\mathds{1}_{\{n(x)\notin D(S)\}}$ and for $R\in SO(d)$, $p_{RS}^\pm(n) = c_Rp_S^\pm (n)$ where $c_Rp_S^\pm$ denotes the conjugation $c_Rp_S^\pm(n)=Rp_S^\pm(R^{-1}n)$.

Let
\begin{align*}
E(S){}&=\{(x,R)\in \partial X\times SO(d)\mid n(x) \in D(RS)\}\\
&=\{(x,R)\in \partial X\times SO(d)\mid R^{-1}n(x) \in D(S)\}.
\end{align*} 
Then this is also a set of measure zero. 

\begin{proof}
First note that by Tonelli's theorem
\begin{align*}
\int_{SO(d)}{}&\int_{\partial X \cap A}(-h(RB\oplus R\check{W},n))^+d\Ha^{d-1}\nu_d(dR)\\
&=\int_{\partial X \cap A}\int_{SO(d)}(-h(B\oplus \check{W},R^{-1}n))^+\nu_d(dR)d\Ha^{d-1}\\
&=\Ha^{d-1}({\partial X\cap A})\int_{S^{d-1}}(-h(B\oplus \check{W},n))^+dn .
\end{align*}
Thus, in order to prove the first statement, we must compute the limit of
\begin{align*}
I-&{}a^{-1}\lim_{a\to 0}aI \\={}&a^{-2}\sum_{m=0}^{d-1} \int_{SO(d)}\int_{\partial X\cap A}\bigg( \int_{t_-(aR W)}^{t_+(aR B)}t^m f_{(RB,RW)}(x+tn,a) s_m(k(x))dt\\&-a(-h(RB\oplus R\check{W},n(x)))^+\bigg) \Ha^{d-1}(dx)\nu_d(dR)
\end{align*}
as $a$ tends to zero. This is done exactly as in the proof of Theorem \ref{nonstat}. 
The only difference is that one has to check that the limit also commutes with the integration over $SO(d)$, but this follows because the constants bounding the integrands are also uniform with respect to the $SO(d)$-action, depending only on $\rho(B,W)=\rho(RB,RW)$. This yields the limit in Theorem \ref{isotropy} plus the term
\begin{equation}\label{vanterm}
-\frac{1}{2}\int_{\partial X\cap A} \int_{SO(d)}(\II^+(RB)-\II^-(RW))^+\mathds{1}_{\{h(RB\oplus R\check{W},n)=0\}}\nu_d(dR) d\Ha^{d-1}.
\end{equation}
But 
\begin{align*}
\{x\in {}&\partial X, \, R\in SO(d)\mid h(RB\oplus R\check{W},n)=0\}\\\subseteq {}&\{x\in \partial X, \,R\in SO(d)\mid R^{-1}n \in D(B\cup W) \} \\{}&\cup \{x\in \partial X, \, R\in SO(d)\mid p_{RB}^+(n)=p_{RW}^-(n),\, R^{-1}n \notin D(B\cup W) \}.
\end{align*}
The first set of the union has measure zero, while on the second set 
\begin{equation*}
(\II_x^+(RB)-\II_x^-(RW))^+= (\II_x(p_{RB}^+(n))-\II_x(p_{RW}^-(n)))^+=0,
\end{equation*}
hence \eqref{vanterm} vanishes.

To prove the last statement, consider
\begin{align*}
 a^{-1}&I-a^{-2}\lim_{a\to 0}aI-a^{-1}\lim_{a\to 0}(I-a^{-1}\lim_{a\to 0}aI))\\
 = &\int_{SO(d)}\int_{\partial X\cap A}\bigg(\sum_{m=0}^{d-1}\frac{a^{-3}}{m+1}({t_+(aR B)}^{m+1}-{t_-(aR W)}^{m+1})\tau_{(RB,RW)}s_m(k)\\ 
 &-  \bigg(a^{-2}\left(h(R B,n)+ h(R \check{W},n)\right)- a^{-1}\frac{1}{2}\left(\II^+(R B) - \II^-(R{W})\right)\\ 
 &+  a^{-1}\frac{1}{2} \left(h(RB,n)^2- h(R \check{W},n)^2\right)s_1(k)\bigg) \delta_{(RB,RW)}(n)\bigg)\nu_d(dR) d\Ha^{d-1}.
\end{align*}
We must see that this is bounded when $a\to 0$. 

For $m\geq 2$, $a^{-3}t(as)^{m+1}$ is uniformly bounded for all $|s|\leq \rho(B\cup W)$ by Lemma~\ref{diff}, taking care of these terms.

 For $m\leq 1$, let 
\begin{equation*} 
T=E^c\cap(\{t_+(aR B)\neq t(a p_{R B}^+(n))\}\cup \{t_-(a R W)\neq t(ap_{R W}^-(n))\}).
\end{equation*}
where $E=E(B\cup W)$. Then 
\begin{align} \nonumber
a^{-3}({}&t_+(aR B)^{m+1}-{t_-(aR W)}^{m+1})s_m(k)\\
={}& \label{split}
a^{-3}({t(a p_{RB}^+(n))}^{m+1}-{t(ap_{R W}^-(n))}^{m+1})s_m(k)\mathds{1}_{E^c\backslash T} \\
&+a^{-3}({t_+(aR B)}^{m+1}-{t_-(aR W)}^{m+1})s_m(k)\mathds{1}_T\nonumber
\end{align}
almost everywhere.

For $m=1$, note that $a^{-3}t(as)^2\leq Ka^{-1}$ for some uniform constant $K$ whenever $|s|\leq \rho(B \cup W)$. By the last part of Lemma \ref{t'}, $a^{-1}\nu_d(T)$ is bounded and hence the following integral is uniformly bounded:
\begin{align*}
\int_{SO(d)}{}&a^{-3}(({t_+(aR B)}^{2}-{t_-(aR W)}^{2})\tau_{(RB,RW)}\\&+ a^2 (h( RB,n )^2-h(R\check{W},n )^2)\delta_{(RB,RW)})s_1(k)\mathds{1}_{T}\nu_d(dR).
\end{align*}
 Moreover,
\begin{align*} 
a^{-3}{}&({t(ap_{R B}^+(n))}^2-{t(ap_{R W}^-(n))}^2 \\&+ a^2 (h( RB,n )^2-h(R\check{W},n )^2))
 s_1(k)\tau_{(RB,RW)}\mathds{1}_{E^c\backslash T}
\end{align*}
is bounded and so is
\begin{equation*} 
a^{-1}\int_{SO(d)}( h( RB,n )^2-h(R\check{W},n )^2)s_1(k)(\delta_{(RB,RW)}-\tau_{(RB,RW)})\mathds{1}_{E^c\backslash T}\nu_d(dR)
\end{equation*}
by Lemma \ref{t'}.
This takes care of the remaining term in \eqref{split}.

Finally, consider the case $m=0$. 
By Lemma \ref{t'}, 
\begin{equation*}
a^{-2}\left({t_+(aR B)}+ah(RB,n)+a^2\tfrac{1}{2}\II^+(RB)\right)
\end{equation*}
is uniformly bounded. 
Thus 
\begin{equation*}
\int_{SO(d)}a^{-3}\left({t_+(aR B)}+ah(RB,n)+a^2\tfrac{1}{2}\II^+(RB)\right)\tau_{(RB,RW)}\mathds{1}_{T}\nu_d(dR)
\end{equation*}
is bounded by the last part of Lemma \ref{t'}. A similar argument applies to the terms involving $W$ and finally
\begin{align*}
(-a^{-1}{}& h(RB,n)+\frac{1}{2}\II^+(RB)\\
&-a^{-1}h(R\check{W},n)-\frac{1}{2}\II^-(RW))(\delta_{(RB,RW)}-\tau_{(RB,RW)})\mathds{1}_{T}
\end{align*}
is bounded by Lemma \ref{hbegr} and hence the integral over $SO(d)$ belongs to $O(a)$, again by Lemma \ref{t'}.

To deal with the remaining term in \eqref{split}, we need the smoothness of $X$.
Since $X$ is smooth, $q:T^r\partial X \to \R$ is a smooth map. In local coordinates on $\partial X$,
\begin{equation*}
q(x,a\alpha)= -\tfrac{1}{2}\II_x(a\alpha) + O(|a\alpha|^3)
\end{equation*}
where the $O(|a\alpha|^3)$ term is bounded by 
\begin{equation*}
C|a\alpha|^3\su \left\{\left|\frac{\partial^3q}{d\alpha_id\alpha_jd\alpha_k }(x,a\alpha)\right|,\, i,j,k=1,\dots,d-1, |a\alpha|\leq r \right\}.
\end{equation*}
The functions $\frac{\partial^3q}{d\alpha_id\alpha_jd\alpha_k}(x,a\alpha)$ are continuous and hence bounded on compact sets. Since $\partial X \cap A$ is contained in a union of finitely many compact sets contained in coordinate neighborhoods, the whole $O(|a\alpha|^3)$ term is uniformly bounded on $T^r\partial X_{\mid A}$ by $C'a^3$ for some constant $C'$.

This shows that $a^{-3}({t(ap_{R B}^+(n))}+ah(RB,n)+ a^2\frac{1}{2}\II(p_{R B}^+(n)))$ is bounded and that the corresponding statement is true for $W$, so it remains to consider
\begin{align}\label{sidste}
(-a^{-1}{}& h(RB\oplus R\check{W},n)+\tfrac{1}{2}(\II^+(RB)-\II^-(RW)))\\&\times (\delta_{(RB,RW)}-\tau_{(RB,RW)})\mathds{1}_{E^c\backslash T}.\nonumber
\end{align}
If $h(RB\oplus R\check{W},n)=0$, then $p_{R B}^+(n)=p_{R W}^+(n)$ since $(x,R)\in E^c$ and thus \eqref{sidste} vanishes. It follows from the last part of Lemma \ref{t'} that the integral of \eqref{sidste} over all of $SO(d)$ belongs to $O(a)$.
\qed
\end{proof}

The formula of Theorem \ref{isotropy} may be simplified further:
\begin{theorem}
Let $X,A,B,W\subseteq \R^d$ be as in  Theorem \ref{isotropy}. Then
\begin{align*}
\lim_{a\to 0} (I-a^{-1}\lim_{a\to 0}aI) 
={}&\frac{1}{2} C_{d-2}(X; A) \int_{S^{d-1}}\Big( d(h(B,n)^2  - h(\check{W},n)^2)\\
& -(|p_B^+(n)|^2-|p_W^-(n)|^2 ) \Big) \delta_{(B,W)}(n)\Ha^{d-1}(dn).
\end{align*}
where $C_{d-2}(X;\cdot)$ is the $(d-2)$th curvature measure on $X$ normalized as in~\cite{schneider}.
\end{theorem}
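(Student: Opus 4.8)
The plan is to start from the right-hand side of Theorem~\ref{isotropy} and carry out the inner integration over $SO(d)$ explicitly, exploiting the fact that the rotation enters only the second fundamental form $\II_x$ in an essential (anisotropic) way while everything else depends on $R$ solely through the direction $R^{-1}n$. First I would rewrite $Q_x$ using the orthogonal splitting $\R^d = T_x\partial X \oplus N_x$: for any $v\in\R^d$ one has $Q_x(v) = -\II_x(\pi_x v) + \tr(\II_x)\langle v, n(x)\rangle^2$. Applying this to $v = p_{RB}^+(n)$ and using the conjugation relation $p_{RB}^+(n) = R p_B^+(R^{-1}n)$ stated before the proof of Theorem~\ref{isotropy}, the normal part becomes $\langle p_{RB}^+(n), n\rangle = h(B, R^{-1}n)$, while the tangential part is $\pi_x(R p_B^+(R^{-1}n)) = R p_B^+(R^{-1}n) - h(B,R^{-1}n)\,n$. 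Thus, writing $m = R^{-1}n$, the integrand $Q^+(RB)$ splits into a purely normal term $\tr(\II_x)\,h(B,m)^2$ depending only on $m$, and a tangential term $-\II_x(\cdot)$ in which the anisotropy of $\II_x$ is coupled to $R$.

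The key step is the integration over $SO(d)$, which I would organize as a disintegration along the fibration $R \mapsto R^{-1}n = m \in S^{d-1}$, whose fibers are cosets of the stabilizer $\mathrm{Stab}(m) \cong SO(d-1)$. For fixed $m$, write $R = R_0 S$ with $R_0 m = n$ fixed and $S \in \mathrm{Stab}(m)$. Decomposing $p_B^+(m) = h(B,m)\,m + p_{B,\perp}$ with $p_{B,\perp} \perp m$ and $|p_{B,\perp}|^2 = |p_B^+(m)|^2 - h(B,m)^2$, the tangential part equals $R_0 S p_{B,\perp}$, which, as $S$ ranges over $\mathrm{Stab}(m)$, is uniformly distributed on the sphere of radius $|p_{B,\perp}|$ inside the $(d-1)$-dimensional space $T_x\partial X$. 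Averaging the quadratic form $\II_x$ over this sphere isotropizes it: since $\II_x = \sum_i k_i(\cdot)_i^2$ and each coordinate of a uniform vector of squared length $\ell^2$ contributes $\frac{\ell^2}{d-1}$ on average, the mean is $\frac{\tr(\II_x)}{d-1}\ell^2$. Combining the two terms, the fiber integration produces $\frac{\tr(\II_x)}{d-1}\big(d\,h(B,m)^2 - |p_B^+(m)|^2\big)$, which is exactly the bracketed expression in the statement.

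Having integrated out the fiber, the remaining integral over $m \in S^{d-1}$ no longer depends on $x$ except through the scalar factor $\tr(\II_x)$, so it factors out of the integral over $\partial X \cap A$. The $W$-contribution is handled identically, yielding $d\,h(\check W, m)^2 - |p_W^-(m)|^2$ together with the overall minus sign. What is left is $\tfrac{1}{2}\big(\int_{\partial X\cap A}\frac{\tr(\II_x)}{d-1}\,d\Ha^{d-1}\big)$ times the spherical integral, and I would identify the first factor with $C_{d-2}(X;A)$ using Schneider's normalization, under which the $(d-2)$th curvature measure is the appropriately normalized integral of $s_1(k) = \tr(\II)$ (the coefficient already appearing in Last's tube formula).

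The conceptual heart is the disintegration of the Haar measure and the isotropization in the fiber-averaging step; this is where I expect the real work to lie. The genuinely delicate part, and the main obstacle, is then keeping precise track of the normalizing constants of $\nu_d$, of the spherical measure $\Ha^{d-1}$ on $S^{d-1}$, and of the curvature measure $C_{d-2}$, so that they combine into exactly the constant $\tfrac12$ in front of $C_{d-2}(X;A)$ as stated, rather than an off-by-a-dimensional-factor expression.
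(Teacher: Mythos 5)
Your proposal is correct and takes essentially the same route as the paper's proof: the paper likewise evaluates the inner $SO(d)$ integral from Theorem \ref{isotropy} by averaging over the stabilizer $SO(d-1)$ of $n$ (inserting the fiber average via left invariance of $\nu_d$ rather than choosing a coset section $R=R_0S$, and justifying the isotropization by the basis-invariance characterization of the trace instead of your coordinate computation), arriving at the same expression $\frac{1}{d-1}\tr(\II_x)\bigl(d\,h(S,u)^2-|p_S^+(u)|^2\bigr)$ before pushing forward to $S^{d-1}$ and factoring out the integral over $\partial X\cap A$. The constant bookkeeping you flag as the main obstacle does close up: with Schneider's normalization one has $C_{d-2}(X;A)=\frac{1}{d-1}\int_{\partial X\cap A}\tr(\II)\,d\Ha^{d-1}$, and the spherical measure in the statement is the pushforward of the probability measure $\nu_d$, i.e.\ normalized as the paper uses it elsewhere (cf.\ the factor $\frac{1}{4\pi}$ in the computation of $\bar{\mu}_1$).
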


In particular, we recover $C_{d-2}(X; A)$ up to a constant depending only on the sets $B$ and $W$.

\begin{proof}
For a finite set $S$ and $x\in \partial X$ fixed, we compute
\begin{align*}
\int_{SO(d)}{}&Q^+_x(R S)\delta_{(RB,RW)}(n)\nu_d (dR)\\ 
={}& \int_{SO(d)} \int_{SO(d-1)}Q^+_x(PR S) \delta_{(B,W)}((PR)^{-1}n)\nu_{d-1}(dP) \nu_d(dR)  \\
={}& \int_{SO(d)} \int_{SO(d-1)}Q_x(c_{PR}p_S^+(n)) \nu_{d-1}(dP) \delta_{(B,W)}(R^{-1}n)\nu_d(dR)
\end{align*}
where $SO(d-1)$ is the subgroup that keeps $n$ fixed. Note that $c_{PR}p_S^+=P c_Rp_S^+$. Hence
\begin{align*}
\int_{SO(d)}{}&Q^+_x(R S)\delta_{(B,W)}(R^{-1}n)\nu_d(dR) \\
={}& \int_{SO(d)} \int_{SO(d-1)}Q_x(P c_R p_S^+(n))\nu_{d-1}(dP) \delta_{(B,W)}(R^{-1}n)  \nu_d(dR) \\
={}& \int_{SO(d)} \bigg(\int_{SO(d-1)}(-\II_x(P c_Rp_S^+(n))) \nu_{d-1}(dP) \\ 
&\qquad+ \tr(\II_x)  \langle c_R p_S^+(n),n\rangle^2\bigg)\delta_{(B,W)}(R^{-1}n)\nu_{d}(dR) \\
={}&\int_{SO(d)} \bigg(\frac{1}{d-1}\tr(\II_x)( \langle c_Rp_S^+(n),n\rangle^2-|c_Rp_S^+(n)|^2)\\ 
&\qquad + \tr(\II_x) \langle c_Rp_S^+(n),n\rangle^2 \bigg)\delta_{(B,W)}(R^{-1}n)\nu_d( dR) \\
={}&\int_{SO(d)} \frac{1}{d-1}\tr(\II_x)(  d\langle p_S^+(R^{-1} n),R^{-1} n\rangle^2\\
&\qquad-|p_S^+(R^{-1}n)|^2) \delta_{(B,W)}(R^{-1}n)\nu_d(dR)\\
={}&\int_{S^{d}} \frac{1}{d-1}\tr(\II_x)(  d h(S,u)^2-|p_S^+(u)|^2)\delta_{(B,W)}(u) \Ha^{d-1}(du).
\end{align*}
The third equality here may be proved using the characterization of the trace as the unique basis invariant linear map on the space of linear maps on $\R^{d-1}$.
Inserting the above in Theorem~\ref{isotropy} yields the formula.\qed
\end{proof}

\section{Application to configurations}\label{apply}
We now return to the design based setting where we observe a compact $r$-regular set $X\subseteq \R^d$ on a random lattice $\La$.

We introduce the following notation:
\begin{align*}
\bar{\varphi}_j(X)={}&\sum_{l:\xi_l\in \eta_j^d}\int_{\partial X} (-h(B_l\oplus \check{W}_l,n(x)))^+\Ha^{d-1}(dx),\\
\bar{\psi}_j={}&2\sum_{l:\xi_l\in \eta_j^d} \int_{S^{d-1}} (-h(B_l\oplus \check{W}_l,n))^+\Ha^{d-1}(dn),\\
\lambda_l(X)={}&\frac{1}{2}\int_{\partial X}(Q^+(B_l)-Q^-({W}_l))\delta_{(B_l,W_l)}(n)d\Ha^{d-1}\\
&-\frac{1}{2}\int_{\partial X} ( \II^+(B_l)-\II^-(W_l))^+\mathds{1}_{\{h(B_l\oplus \check{W}_l,n)=0\}} d\Ha^{d-1},\\
\bar{\lambda}_j(X)={}&\sum_{l:\xi_l\in \eta_j^d}\lambda_l(X),\\
\mu_l={}&\frac{\pi}{d-1} \int_{S^{d-1}}\big(d(h(B_l,n)^2  - h(\check{W}_l,n)^2) \\
&\qquad -(|p_{B_l}^+(n)|^2-|p_{W_l}^-(n)|^2 ) \big)\delta_{(B_l,W_l)}(n)dn,\\
\bar{\mu}_j={}&\sum_{l:\xi_l\in \eta_j^d}\mu_l.
\end{align*}
Combining the observation \eqref{HEN} with Theorem \ref{nonstat} and \ref{isotropy}, we obtain:

\begin{corollary}\label{confcor}
Let $\xi_l$ be a configuration with black and white points $(B_l,W_l)$.
If $\La$ is a stationary non-isotropic lattice,
\begin{equation*}
\lim_{a\to 0} (a^{d-2}EN_l-a^{-1}\lim_{a\to 0} a^{d-1}EN_l)=\lambda_l(X).
\end{equation*}
If $\La$ is stationary isotropic,
\begin{equation*}
\lim_{a\to 0} (a^{d-2}EN_l-a^{-1}\lim_{a\to 0} a^{d-1}EN_l)= \mu_l V_{d-2}(X).
\end{equation*}
In particular, suppose $\hat{V}_{d-2}$ is a local estimator of the form \eqref{motion}.
In both cases $\lim_{a\to 0}E\hat{V}_{d-2}(X)$ exists if and only if $\lim_{a\to 0} aE\hat{V}_{d-2}(X)=0$, where
\begin{align}\nonumber
\lim_{a\to 0} aE\hat{V}_{d-2}(X)&= \sum_{j\in J}w_j^{(d-2)} \bar{\varphi}_j(X)\\
\lim_{a\to 0} aE\hat{V}_{d-2}(X)&= V_{d-1}(X) \sum_{j\in J}w_j^{(d-2)} \bar{\psi}_j \label{inth2}
\end{align}
in the non-isotropic and isotropic case, respectively.
In this case, the limit is 
\begin{equation*}
\lim_{a\to 0} E\hat{V}_{d-2}(X)=\sum_{j\in J}w_j^{(d-2)} \bar{\lambda}_j(X)
\end{equation*}
in the non-isotropic case, 
and in the isotropic case
\begin{equation}
\lim_{a\to 0} E\hat{V}_{d-2}(X)= V_{d-2}(X)\sum_{j\in J}w_j^{(d-2)} \bar{\mu}_j.\label{isobias}
\end{equation}
\end{corollary}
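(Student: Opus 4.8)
\section*{Proof proposal for Corollary~\ref{confcor}}

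The statement is largely a bookkeeping consequence of Theorems~\ref{nonstat} and~\ref{isotropy} and the simplification theorem following the latter, so the plan is to feed the identity \eqref{HEN} into these results and then assemble the estimator \eqref{motion}. First I would fix a bounded Borel set $A$ containing the compact boundary, say $A=\partial X\oplus B(r)$. Any point $z$ of the hit-or-miss transform $(X\ominus a\check B_l)\backslash(X\oplus a\check W_l)$ satisfies $z+aB_l\subseteq X$ and $z+aW_l\subseteq\R^d\backslash X$, hence lies within distance $a\rho(B_l\cup W_l)$ of $\partial X$; for $a$ small this is inside the $r$-tube where $\xi_{\partial X}$ is defined and $\xi_{\partial X}(z)\in\partial X\subseteq A$. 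Thus for small $a$ the localization by $\xi_{\partial X}^{-1}(A)$ is vacuous and \eqref{HEN} gives $a^dEN_l=\Ha^d(\xi_{\partial X}^{-1}(A)\cap(X\ominus a\check B_l)\backslash(X\oplus a\check W_l))$, with the $SO(d)$-averaged version holding in the isotropic case. I would also note that only configurations with $B_l$ and $W_l$ both non-empty contribute: by Section~\ref{design} the weights $w_0^{(d-2)}$ and $w_{2^{2^d}-1}^{(d-2)}$ must vanish for $\hat V_{d-2}$ to be well defined, so Theorems~\ref{nonstat} and~\ref{isotropy} apply verbatim to every surviving term.

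For the per-configuration limits I would write $a^{d-2}EN_l=a^{-2}\,a^dEN_l$ and take the first-order term from \eqref{V1}, namely $a^{-1}\lim_{a\to0}a^{d-1}EN_l$. In the non-isotropic case the difference $a^{d-2}EN_l-a^{-1}\lim_{a\to0}a^{d-1}EN_l$ is then exactly the quantity whose limit Theorem~\ref{nonstat} computes with $A\supseteq\partial X$, and that limit is by definition $\lambda_l(X)$. In the isotropic case the same rewriting identifies $a^{d-2}EN_l$ with the quantity $I$ of Theorem~\ref{isotropy} and $a^{-1}\lim a^{d-1}EN_l$ with its first-order part, so the difference converges to $\lim(I-a^{-1}\lim aI)$, which the simplification theorem evaluates as $\tfrac12 C_{d-2}(X;A)\int_{S^{d-1}}(d(h(B_l,n)^2-h(\check W_l,n)^2)-(|p_{B_l}^+(n)|^2-|p_{W_l}^-(n)|^2))\delta_{(B_l,W_l)}(n)\,\Ha^{d-1}(dn)$. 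Taking $A\supseteq\partial X$ makes $C_{d-2}(X;A)=C_{d-2}(X;\R^d)$; recognizing, as in that theorem's proof, $C_{d-2}(X;\R^d)=\tfrac{1}{d-1}\int_{\partial X}\tr(\II)\,d\Ha^{d-1}=\tfrac{2\pi}{d-1}V_{d-2}(X)$ turns the prefactor $\tfrac12 C_{d-2}$ into $\tfrac{\pi}{d-1}V_{d-2}(X)$, which is precisely the constant appearing in $\mu_l$, giving $\mu_l V_{d-2}(X)$.

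Next I would assemble the estimator. Since $J$ and each class $\eta_j^d$ are finite, all sums commute with the limit, and \eqref{motion} gives $E\hat V_{d-2}(X)=\sum_j w_j^{(d-2)}\sum_{l:\xi_l\in\eta_j^d}a^{d-2}EN_l$. Substituting the per-configuration expansion $a^{d-2}EN_l=a^{-1}\lim_{a\to0}a^{d-1}EN_l+\lambda_l(X)+o(1)$ (respectively $+\mu_lV_{d-2}(X)+o(1)$) yields $E\hat V_{d-2}(X)=a^{-1}L_1+L_2+o(1)$ with $L_1,L_2$ independent of $a$. Summing the first-order limits over each class produces $\bar\varphi_j(X)$ in the non-isotropic case and, via $\Ha^{d-1}(\partial X)=2V_{d-1}(X)$, the factor $V_{d-1}(X)\bar\psi_j$ in the isotropic case, so that $L_1=\lim_{a\to0}aE\hat V_{d-2}(X)$ is given by the two displayed formulas for $\lim aE\hat V_{d-2}$, the second being \eqref{inth2}; summing the second-order limits gives $L_2=\sum_j w_j^{(d-2)}\bar\lambda_j(X)$, respectively $L_2=V_{d-2}(X)\sum_j w_j^{(d-2)}\bar\mu_j$ as in \eqref{isobias}. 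From $E\hat V_{d-2}(X)=a^{-1}L_1+L_2+o(1)$ the limit as $a\to0$ exists if and only if the divergent coefficient $L_1$ vanishes, i.e.\ if and only if $\lim_{a\to0}aE\hat V_{d-2}(X)=0$, in which case the limit equals $L_2$.

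Essentially all of the analytic content is already carried by the three cited theorems, so the corollary is mostly assembly. The only genuinely delicate point is the constant bookkeeping in the isotropic second-order term: correctly threading the normalization of $C_{d-2}$ through the identity $C_{d-2}(X;\R^d)=\tfrac{2\pi}{d-1}V_{d-2}(X)$ so that it collapses exactly onto the factor $\pi/(d-1)$ hidden in $\mu_l$. The remaining subtlety --- checking that a choice $A\supseteq\partial X$ makes the $\xi_{\partial X}^{-1}(A)$-localization disappear for small $a$ --- is routine but must be stated, since the theorems are phrased with the cut-off $A$ while \eqref{HEN} is not.
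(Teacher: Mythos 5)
Your proposal is correct and follows essentially the same route as the paper, whose entire proof is the remark that the corollary follows by combining \eqref{HEN} with Theorems \ref{nonstat} and \ref{isotropy} (plus the simplification theorem); the details you supply---choosing $A\supseteq \partial X$ so that the $\xi_{\partial X}^{-1}(A)$ cut-off is vacuous for small $a$, discarding $\xi_0$ and $\xi_{2^{2^d}-1}$ via the vanishing weights, the normalization $C_{d-2}(X;\R^d)=\tfrac{2\pi}{d-1}V_{d-2}(X)$ collapsing onto the factor $\pi/(d-1)$ in $\mu_l$, the identity $\Ha^{d-1}(\partial X)=2V_{d-1}(X)$ absorbing the factor $2$ in $\bar{\psi}_j$, and the $E\hat{V}_{d-2}(X)=a^{-1}L_1+L_2+o(1)$ dichotomy---are exactly the bookkeeping the paper leaves implicit. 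One cosmetic remark: when you say the non-isotropic limit "is by definition $\lambda_l(X)$", note that the second term of $\lambda_l$ matches \eqref{limtec} in the \emph{proof} of Theorem \ref{nonstat} rather than the sign convention displayed in \eqref{extra}, an inconsistency internal to the paper and not a gap in your argument.
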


In the isotropic case, there are some symmetries allowing us to reduce the above formula a bit further. The following properties are obvious:
\begin{proposition}\label{muer}
\begin{equation*}
\mu_l=-\mu_{(2^{2^d}-1-l)}.
\end{equation*}
If $\xi_{l_1}$ and $\xi_{l_2}$ belong to the same configuration class,
\begin{equation*}
\mu_{l_1}=\mu_{l_2}.
\end{equation*}
\end{proposition}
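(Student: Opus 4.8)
The plan is to read off both identities from the change-of-variables behaviour of the integral defining $\mu_l$. Write $\mu_l=\frac{\pi}{d-1}\int_{S^{d-1}}g_{(B_l,W_l)}(n)\,dn$ with integrand
\[
g_{(B,W)}(n)=\big(d(h(B,n)^2-h(\check W,n)^2)-(|p_B^+(n)|^2-|p_W^-(n)|^2)\big)\delta_{(B,W)}(n),
\]
and set $J(B,W)=\int_{S^{d-1}}g_{(B,W)}\,dn$. First I would prove $\mu_l=-\mu_{2^{2^d}-1-l}$. The configuration $\xi_{2^{2^d}-1-l}$ is the pixelwise complement of $\xi_l$, so its black/white pair is $(W_l,B_l)$. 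I would substitute $n\mapsto-n$ in $J(W_l,B_l)$ and use the elementary identities $h(S,-n)=h(\check S,n)$, $S_\pm(-n)=S_\mp(n)$ (hence $p_S^\pm(-n)=p_S^\mp(n)$ off the null set $D(S)$), and $\delta_{(W_l,B_l)}(-n)=\delta_{(B_l,W_l)}(n)$. These turn $g_{(W_l,B_l)}(-n)$ into $-g_{(B_l,W_l)}(n)$ termwise, and since $\Ha^{d-1}$ on $S^{d-1}$ is invariant under $n\mapsto-n$ this gives the sign change. There is no obstacle here.

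For the second identity I would show that $\mu_l$ depends on $(B_l,W_l)$ only through its $\mathcal M$-orbit, i.e.\ $J(MB,MW)=J(B,W)$ for every $M\in\mathcal M$. Writing $M=M_0+t$ with orthogonal part $M_0\in O(d)$ and translation $t$, I split this into two invariances. The orthogonal invariance $J(M_0B,M_0W)=J(B,W)$ is immediate from the substitution $n\mapsto M_0n$: one has $h(M_0B,M_0n)=h(B,n)$, $p_{M_0B}^\pm(M_0n)=M_0p_B^\pm(n)$ (so the norms $|p^\pm|^2$ are unchanged), $\delta_{(M_0B,M_0W)}(M_0n)=\delta_{(B,W)}(n)$, and $\Ha^{d-1}$ on $S^{d-1}$ is $O(d)$-invariant; this handles both the rotations and the reflections in $\mathcal M$. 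It then remains to prove the translation invariance $J(B+v,W+v)=J(B,W)$, since $MB=M_0(B+M_0^{-1}t)$.

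Translation invariance is the main obstacle, because the individual terms are not translation invariant: under $B\mapsto B+v$, $W\mapsto W+v$ one has $h(B+v,n)=h(B,n)+\langle v,n\rangle$ and $p_{B+v}^+(n)=p_B^+(n)+v$, and the resulting correction
\[
\int_{S^{d-1}}\big(2d\langle v,n\rangle\,h(B\oplus\check W,n)-2\langle v,p_B^+(n)-p_W^-(n)\rangle\big)\delta_{(B,W)}(n)\,dn
\]
does not vanish pointwise, so a direct computation would require a genuine cancellation over the spherical cap $\{h(B\oplus\check W,\cdot)<0\}$. Rather than attempt this, I would argue geometrically. By Corollary~\ref{confcor}, for a fixed isotropic lattice and any $r$-regular $X$ with $V_{d-2}(X)\neq0$ (for instance a large ball), $\mu_l$ is determined by the asymptotics of $EN_l$; and by \eqref{HEN} the quantity $a^dEN_l$ is, after averaging over $R\in SO(d)$, an integral of volumes $\Ha^d\big((X\ominus aR\check B)\setminus(X\oplus aR\check W)\big)=\Ha^d(\{z\mid z+aRB\subseteq X,\ z+aRW\subseteq\R^d\backslash X\})$. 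Replacing $(B,W)$ by $(B+v,W+v)$ replaces each such set by its translate by $-aRv$, and since $\Ha^d$ is translation invariant the whole expression $EN_l$ is unchanged exactly, for every $a$. Hence every coefficient in its asymptotic expansion, in particular $\mu_lV_{d-2}(X)$, is translation invariant, and dividing by $V_{d-2}(X)$ gives $J(B+v,W+v)=J(B,W)$.

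Combining the three invariances, $J(MB_{l_1},MW_{l_1})=J(B_{l_1},W_{l_1})$ whenever $\xi_{l_2}=M\xi_{l_1}$, whence $\mu_{l_1}=\mu_{l_2}$ on each configuration class. The only subtle step is the translation invariance, which I would obtain from the exactness of the hit-or-miss volume rather than from the integrand; the complement identity and the orthogonal invariance are routine changes of variables on $S^{d-1}$.
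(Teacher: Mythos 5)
Your proposal is correct, and it is worth noting that the paper offers no argument at all here: Proposition \ref{muer} is introduced with the words ``the following properties are obvious.'' Your complement identity (via $n\mapsto -n$, using $h(S,-n)=h(\check S,n)$, $p^{\pm}_S(-n)=p^{\mp}_S(n)$ off the null set $D(S)$, and $\delta_{(W_l,B_l)}(-n)=\delta_{(B_l,W_l)}(n)$) and the orthogonal invariance are exactly the routine substitutions the author presumably had in mind. But you correctly identified the one point that is \emph{not} obvious: the elements of $\mathcal M$ have nontrivial translation parts (e.g.\ $x\mapsto(1-x_1,x_2,\dots,x_d)$), the integrand of $\mu_l$ is not translation invariant term by term, and the correction integral you display does not vanish pointwise. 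Your resolution --- observing that the hit-or-miss set for $(B+v,W+v)$ is the exact translate by $-aRv$ of that for $(B,W)$, so the rotation-averaged volume $I(a)$ is literally unchanged for every $a$, whence every asymptotic coefficient, in particular $\mu_l V_{d-2}(X)$ for a ball with $V_{d-2}\neq 0$, is translation invariant --- is valid; note that this must be run through Theorem \ref{isotropy} and its corollary for general finite pairs $(B,W)$ rather than through $EN_l$ itself, since $(B_l+v,W_l+v)$ need not be a configuration pair, but the theorems are stated in that generality so nothing breaks. For completeness, the cancellation you sidestepped can also be proved directly: with $h(n)=h(B\oplus\check W,n)$ and $f=\min(h,0)$, a Lipschitz positively $1$-homogeneous function with $\nabla f=\nabla h=p_B^+-p_W^-$ a.e.\ on $\{h<0\}$, the divergence theorem on the unit ball gives $\int_{S^{d-1}}\partial_i f\, d\Ha^{d-1}=d\int_{S^{d-1}}f\,n_i\, d\Ha^{d-1}$, i.e.
\begin{equation*}
\int_{S^{d-1}}\big(d\,h(n)\,n-(p_B^+(n)-p_W^-(n))\big)\mathds{1}_{\{h(n)<0\}}\,\Ha^{d-1}(dn)=0,
\end{equation*}
which kills your correction term and yields $J(B+v,W+v)=J(B,W)$ without invoking the asymptotic machinery. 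Either way, your argument is complete; the direct identity is more elementary, while your route has the merit of explaining \emph{why} invariance must hold (the estimator cannot see a common shift of the structuring elements).
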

Let $\xi_l\in \eta_{j_1}^d$ and let $\eta_{j_2}^d $ be the configuration class of $\xi_{(2^{2^d}-1-l)}$. Then by the corollary, we may as well choose $w_{j_1}^{(d-2)}=-w_{j_2}^{(d-2)}$. Since $\bar{\psi}_{j_1}=\bar{\psi}_{j_2}$, this also ensures that the asymptotic mean exists. Finally it ensures that interchanging foreground and background changes the sign of $\hat{V}_{d-2}$, which is desirable since $V_{d-2}$ has this property.
 
Moreover, not all $\mu_l$ are zero, e.g.\ $\mu_1>0$. If $\eta_1^d$ and $\eta_{2^d-1}^d$ denote the configuration classes of $\xi_1$ and $\xi_{2^{2^d}-2}$, respectively, this shows:
\begin{corollary}
In the isotropic case, asymptotically unbiased estimators for $V_{d-2}$ do exist. For instance, the estimator with all weights equal to zero except
\begin{equation*}
w_{1}^{(d-2)}=-w_{2^{d}-1}^{(d-2)}=\frac{1}{2\bar{\mu}_1}
\end{equation*}
is asymptotically unbiased. 
\end{corollary}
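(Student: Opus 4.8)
The plan is to feed the proposed weights into the two criteria supplied by Corollary \ref{confcor} in the isotropic case. There, $\lim_{a\to 0}E\hat{V}_{d-2}(X)$ exists precisely when $\sum_{j\in J}w_j^{(d-2)}\bar{\psi}_j=0$, and in that case the limit equals $V_{d-2}(X)\sum_{j\in J}w_j^{(d-2)}\bar{\mu}_j$ by \eqref{isobias}. Asymptotic unbiasedness is therefore equivalent to the two numerical identities $\sum_{j\in J}w_j^{(d-2)}\bar{\psi}_j=0$ and $\sum_{j\in J}w_j^{(d-2)}\bar{\mu}_j=1$. Since the only nonzero weights are $w_1^{(d-2)}=\frac{1}{2\bar{\mu}_1}$ and $w_{2^d-1}^{(d-2)}=-\frac{1}{2\bar{\mu}_1}$, each sum collapses to two terms, and the whole claim reduces to showing $\bar{\psi}_1=\bar{\psi}_{2^d-1}$ and $\bar{\mu}_{2^d-1}=-\bar{\mu}_1$, together with $\bar{\mu}_1\neq 0$ so that the weights make sense.

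First I would note that the weights are well-defined: by Proposition \ref{muer} every member of $\eta_1^d$ carries the common value $\mu_1$, whence $\bar{\mu}_1=|\eta_1^d|\,\mu_1>0$ because $\mu_1>0$. Next I would exploit complementation. The configuration $\xi_{2^{2^d}-2}$ is exactly the complement of $\xi_1$ obtained by interchanging black and white vertices, and since complementation commutes with the action of $\mathcal{M}$, the class $\eta_{2^d-1}^d$ is the image of $\eta_1^d$ under $\xi_l\mapsto\xi_{2^{2^d}-1-l}$. For $\bar{\mu}$, Proposition \ref{muer} gives $\mu_{2^{2^d}-1-l}=-\mu_l$ for each member, so summing over the class yields $\bar{\mu}_{2^d-1}=-\bar{\mu}_1$. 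For $\bar{\psi}$, swapping $B_l$ and $W_l$ replaces the integrand $(-h(B_l\oplus\check{W}_l,n))^+$ by $(-h(W_l\oplus\check{B}_l,n))^+$; applying the measure-preserving substitution $n\mapsto -n$ on $S^{d-1}$ and using $h(S,-n)=h(\check{S},n)$ turns the latter back into the former, so the contributions of $\xi_l$ and $\xi_{2^{2^d}-1-l}$ agree and $\bar{\psi}_1=\bar{\psi}_{2^d-1}$, as already remarked in the text.

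Finally I would assemble the pieces. With the chosen weights, $\sum_{j\in J}w_j^{(d-2)}\bar{\psi}_j=\frac{1}{2\bar{\mu}_1}(\bar{\psi}_1-\bar{\psi}_{2^d-1})=0$, so the asymptotic mean exists by Corollary \ref{confcor}; and $\sum_{j\in J}w_j^{(d-2)}\bar{\mu}_j=\frac{1}{2\bar{\mu}_1}(\bar{\mu}_1-\bar{\mu}_{2^d-1})=\frac{1}{2\bar{\mu}_1}\cdot 2\bar{\mu}_1=1$, so \eqref{isobias} gives $\lim_{a\to 0}E\hat{V}_{d-2}(X)=V_{d-2}(X)$ for every $r$-regular $X$, which is the asserted unbiasedness. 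I do not expect a genuine obstacle here: the analytic content is already carried by Corollary \ref{confcor} and Proposition \ref{muer}, and what remains is the bookkeeping of two symmetry relations. The only step demanding real care is $\bar{\psi}_1=\bar{\psi}_{2^d-1}$ via the $n\mapsto -n$ substitution, since it is this identity that guarantees the estimator possesses a finite asymptotic mean in the first place.
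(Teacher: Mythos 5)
Your proposal is correct and follows essentially the same route as the paper: the text preceding the corollary reduces everything to the two linear conditions of Corollary \ref{confcor}, uses Proposition \ref{muer} for $\bar{\mu}_{2^d-1}=-\bar{\mu}_1$, asserts $\bar{\psi}_{j_1}=\bar{\psi}_{j_2}$ for complementary classes, and invokes $\mu_1>0$, exactly as you do. Your only addition is to actually justify $\bar{\psi}_1=\bar{\psi}_{2^d-1}$ via the substitution $n\mapsto -n$ and $h(S,-n)=h(\check{S},n)$, a detail the paper states without proof, and that verification is correct.
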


The last proposition of this section reduces the formula for $\bar{\mu}_j$ in a way that resembles \eqref{HaV1} and the formula for $\bar{\psi}_j$ even more.
\begin{proposition}
\begin{equation*}
\bar{\mu}_j=
\frac{d\pi}{d-1} \sum_{l:\xi_l\in \eta_j^d}\int_{S^{d-1}} (h(B_l,n)^2-h(\check{W_l},n)^2)\delta_{(B_l,W_l)}(n)\Ha^{d-1}( dn).
\end{equation*}
\end{proposition}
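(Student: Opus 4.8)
The plan is to compare the definition of $\bar{\mu}_j$ with the asserted formula and to observe that the two agree precisely when the contributions of the terms $|p_{B_l}^+(n)|^2$ and $|p_{W_l}^-(n)|^2$ drop out after summing over the configuration class. Inserting the definition $\mu_l=\frac{\pi}{d-1}\int_{S^{d-1}}\big(d(h(B_l,n)^2-h(\check{W}_l,n)^2)-(|p_{B_l}^+(n)|^2-|p_{W_l}^-(n)|^2)\big)\delta_{(B_l,W_l)}(n)\,dn$ and subtracting, the claim reduces to showing
\begin{equation*}
\sum_{l:\xi_l\in\eta_j^d}\int_{S^{d-1}}\big(|p_{B_l}^+(n)|^2-|p_{W_l}^-(n)|^2\big)\delta_{(B_l,W_l)}(n)\,dn=0 .
\end{equation*}

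The key simplification I would use is that every support point lies in $C_0\subseteq\{0,1\}^d$, so each of its coordinates equals its own square; writing $\mathbf 1=(1,\dots,1)$, this gives $|p_{B_l}^+(n)|^2=\langle \mathbf 1,p_{B_l}^+(n)\rangle$ and likewise for $W$. Thus the quadratic integrand becomes the \emph{linear} functional $\langle \mathbf 1,\,p_{B_l}^+(n)-p_{W_l}^-(n)\rangle$, and I would kill its class sum by averaging over the symmetry group $\mathcal M$. Since $\mathcal M$ is exactly the isometry group of the cube $C$, every $M\in\mathcal M$ fixes the center $\tfrac12\mathbf 1$ and has the form $Mx=Lx+v$ with linear part $L$ a signed permutation matrix and $v=\tfrac12(\mathbf 1-L\mathbf 1)$. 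A direct check then yields the transformation rules $p_{MB_l}^+(n)=L\,p_{B_l}^+(L^{-1}n)+v$, the analogous rule for $p_{MW_l}^-$, and $\delta_{(MB_l,MW_l)}(n)=\delta_{(B_l,W_l)}(L^{-1}n)$; the last one holds because the translation $v$ cancels out of $h(MB_l\oplus\check{MW_l},n)=h(B_l\oplus\check{W}_l,L^{-1}n)$.

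Fixing a representative of $\eta_j^d$, I would then sum the contribution over all $M\in\mathcal M$; since this equals the orbit sum times the (positive) order of the stabilizer, it suffices to show the group sum is zero. Substituting $n\mapsto Ln$ in each integral (which preserves $\Ha^{d-1}$ on $S^{d-1}$, as $L\in O(d)$) pulls $\delta$ back to $\delta_{(B_l,W_l)}$ and turns the black contribution into $\langle L^{T}\mathbf 1,p_{B_l}^+(u)\rangle+\langle\mathbf 1,v\rangle$, with the analogous expression for white. The translation terms $\langle\mathbf 1,v\rangle$ are identical in the black and white parts and so cancel in the difference, leaving
\begin{equation*}
\int_{S^{d-1}}\Big\langle \sum_{M\in\mathcal M}L^{T}\mathbf 1,\ p_{B_l}^+(u)-p_{W_l}^-(u)\Big\rangle\,\delta_{(B_l,W_l)}(u)\,du .
\end{equation*}
Finally $\sum_{M\in\mathcal M}L^{T}\mathbf 1=0$, because $L^T\mathbf 1$ runs over all signed permutations of $\mathbf 1$ and each coordinate takes the values $\pm1$ equally often; hence the integral vanishes, which is the desired reduction.

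The step I expect to require the most care is the bookkeeping of the affine part: one must verify both that $\delta$ depends on $M$ only through its linear part $L$ and that the translation contributions $\langle\mathbf 1,v\rangle$ enter the black and white terms identically, so that they cancel in the difference. Once this is secured, the real engine of the proof is the reduction of the quadratic $|p|^2$ to the linear functional $\langle\mathbf 1,\cdot\rangle$, since it is exactly this linearity that lets the group average annihilate the unwanted terms, the linear parts of $\mathcal M$ summing to zero.
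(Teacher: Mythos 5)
Your proof is correct, but it proceeds by a genuinely different mechanism than the paper's. The paper fixes a single rotation $R$ taking $C$ to $\check{C}$ and pairs each configuration $\xi_l$ with $\xi_{l'}=R(\xi_l)+(1,\dots,1)$, which lies in the same class $\eta_j^d$; it then uses the complement identities $|p_{B_l}^+(n)|^2=d-|p_{B_{l'}}^+(Rn)|^2$, $|p_{W_l}^-(n)|^2=d-|p_{W_{l'}}^-(Rn)|^2$ together with $\delta_{(B_l,W_l)}(n)=\delta_{(B_{l'},W_{l'})}(Rn)$, so that within each pair the quadratic terms contribute $\int(d-d)\,\delta_{(B_l,W_l)}\,d\Ha^{d-1}=0$ and the claim follows by summing the pairs. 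You instead linearize, observing that on $C_0\subseteq\{0,1\}^d$ one has $|p|^2=\langle \mathbf{1},p\rangle$, and then annihilate the resulting linear functional by averaging over all of $\mathcal{M}$, using $\sum_{M\in\mathcal{M}}L^T\mathbf{1}=0$ for the linear parts $L$. Your affine bookkeeping is sound: $v=\tfrac12(\mathbf{1}-L\mathbf{1})$ since $\mathcal{M}$ fixes the cube's center, $\delta$ depends only on $L$ because translations cancel in $B\oplus\check{W}$, the $\langle\mathbf{1},v\rangle$ terms cancel in the black--white difference, orbit--stabilizer converts the group sum into a positive multiple of the class sum, and the transformation rules for $p^{\pm}$ hold off the null sets $D(\cdot)$, which is enough for the integrals. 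Comparing the two: your linearization is precisely what underlies the paper's identity (on $\{0,1\}^d$, $|\mathbf{1}-x|^2=d-|x|^2$ is immediate from $|x|^2=\langle\mathbf{1},x\rangle$), so your version makes the structural reason for the cancellation explicit and shows that any linear functional of $p_{B_l}^+-p_{W_l}^-$ averages out over the class; the paper's pairing is leaner, needing one group element instead of all $2^d d!$ of them --- indeed your linearized computation applied to the single symmetry $M_0x=\mathbf{1}-x$ (available since $\mathcal{M}$ contains reflections) collapses exactly to the paper's argument.
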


\begin{proof} 
Choose a rotation $R$ taking $C$ to $\check{C}$.
For each configuration $\xi_l$ we let $\xi_{l'}=R(\xi_l)+(1,1,1)$. 
Then
\begin{align*}
|p_{B_l}^+(n)|^2&= d- |p_{B_{l'}}^+(R n)|^2,\\
|p_{W_l}^-(n)|^2&= d- |p_{W_{l'}}^-(R n)|^2,
\end{align*}
and $\delta_{(B_l,W_l)}(n)=\delta_{(B_{l'},W_{l'})}(R n)$, so that 
\begin{align*}
\int_{S^{d-1}}{}&\left((|p_{W_l}^-|^2-|p_{B_l}^+|^2)\delta_{(B_l,W_l)}+(|p_{W_{l'}}^-|^2-|p_{B_{l'}}^+|^2)\delta_{(B_{l'},W_{l'})}\right) d\Ha^{d-1} \\ ={}&
\int_{S^{d-1}}(d-d) \delta_{(B_l,W_l)} d\Ha^{d-1}=0.
\end{align*}
Hence
\begin{align*}
\mu_l+\mu_{l'}=
\frac{\pi d}{d-1}{} &\int_{S^{d-1}} \Big((h(B_l,n)^2-h(\check{W_l},n)^2)\delta_{(B_l,W_l)}(n)\\ 
&  +(h(B_{l'},n)^2-h(\check{W_{l'}},n)^2)\delta_{(B_{l'},W_{l'})}(n)\Big)\Ha^{d-1}( dn)
\end{align*}
from which the claim follows.\qed
\end{proof}

\section{More on the isotropic setting in 3D}\label{3Diso}
We now specialize to the isotropic situation. That is, we assume throughout this section that $X\subseteq \R^3$ is an $r$-regular compact set observed on a stationary isotropic lattice $a\La$. Theorem  \ref{confcor} determines the set of all asymptotically unbiased estimators for $V_{d-2}$ as follows: an estimator is asymptotically unbiased if and only if the weights satisfy two linear equations 
\begin{gather*}\nonumber
\sum_{j\in J}w_j^{(d-2)}\bar{\psi}_j=0,\\
\sum_{j\in J}w_j^{(d-2)}\bar{\mu}_j=1.\label{unbi cond}
\end{gather*}
The first one ensures that the asymptotic mean exists and the second one makes the estimator asymptotically unbiased.

The coefficients $\bar{\psi}_j$ and $\bar{\mu}_j$ can in principle be computed directly for each configuration. However, the actual computations are tedious. The computations in dimension $d=2$ were done in \cite{am}. Below we consider the case $d=3$.

First note that $\delta_{(B_l,W_l)}$ vanishes if $W_l$ and $B_l $ cannot be strongly separated by a hyperplane, so we may ignore such configurations. Recall that we also ignore the configurations $\xi_0$ and $\xi_{255}$. The remaining configurations fall into one of the eight equivalence classes pictured below:
\begin{equation*}
\setlength{\unitlength}{0.1cm}
\begin{picture}(80,45)
\put(7,43){$\eta_1^3$}
\put(27,43){$\eta_2^3$}
\put(47,43){$\eta_3^3$}
\put(67,43){$\eta_{4,1}^3$}

\put(7,18){$\eta_{4,2}^3$}
\put(27,18){$\eta_5^3$}
\put(47,18){$\eta_6^3$}
\put(67,18){$\eta_7^3$}

\put(0,25){\circle*{2}}
\put(10,25){\circle{2}}
\put(0,35){\circle{2}}
\put(10,35){\circle{2}}
\put(5.4,30.4){\circle{2}}
\put(15.4,30.4){\circle{2}}
\put(5.4,40.4){\circle{2}}
\put(15.4,40.4){\circle{2}}
\put(0,26){\line(0,1){8}}
\put(1,25){\line(1,0){8}}
\put(10,26){\line(0,1){8}}
\put(1,35){\line(1,0){8}}

\put(5.4,31.4){\line(0,1){8}}
\put(6.4,30.4){\line(1,0){8}}
\put(15.4,31.4){\line(0,1){8}}
\put(6.4,40.4){\line(1,0){8}}

\put(0.7,25.7){\line(1,1){4}}
\put(10.7,25.7){\line(1,1){4}}
\put(0.7,35.7){\line(1,1){4}}
\put(10.7,35.7){\line(1,1){4}}
\put(20,25){\circle*{2}}
\put(30,25){\circle*{2}}
\put(20,35){\circle{2}}
\put(30,35){\circle{2}}
\put(25.4,30.4){\circle{2}}
\put(35.4,30.4){\circle{2}}
\put(25.4,40.4){\circle{2}}
\put(35.4,40.4){\circle{2}}
\put(20,26){\line(0,1){8}}
\put(21,25){\line(1,0){8}}
\put(30,26){\line(0,1){8}}
\put(21,35){\line(1,0){8}}

\put(25.4,31.4){\line(0,1){8}}
\put(26.4,30.4){\line(1,0){8}}
\put(35.4,31.4){\line(0,1){8}}
\put(26.4,40.4){\line(1,0){8}}

\put(20.7,25.7){\line(1,1){4}}
\put(30.7,25.7){\line(1,1){4}}
\put(20.7,35.7){\line(1,1){4}}
\put(30.7,35.7){\line(1,1){4}}

\put(40,25){\circle*{2}}
\put(50,25){\circle*{2}}
\put(40,35){\circle{2}}
\put(50,35){\circle{2}}
\put(45.4,30.4){\circle*{2}}
\put(55.4,30.4){\circle{2}}
\put(45.4,40.4){\circle{2}}
\put(55.4,40.4){\circle{2}}
\put(40,26){\line(0,1){8}}
\put(41,25){\line(1,0){8}}
\put(50,26){\line(0,1){8}}
\put(41,35){\line(1,0){8}}

\put(45.4,31.4){\line(0,1){8}}
\put(46.4,30.4){\line(1,0){8}}
\put(55.4,31.4){\line(0,1){8}}
\put(46.4,40.4){\line(1,0){8}}

\put(40.7,25.7){\line(1,1){4}}
\put(50.7,25.7){\line(1,1){4}}
\put(40.7,35.7){\line(1,1){4}}
\put(50.7,35.7){\line(1,1){4}}

\put(60,25){\circle*{2}}
\put(70,25){\circle*{2}}
\put(60,35){\circle{2}}
\put(70,35){\circle{2}}
\put(65.4,30.4){\circle*{2}}
\put(75.4,30.4){\circle*{2}}
\put(65.4,40.4){\circle{2}}
\put(75.4,40.4){\circle{2}}
\put(60,26){\line(0,1){8}}
\put(61,25){\line(1,0){8}}
\put(70,26){\line(0,1){8}}
\put(61,35){\line(1,0){8}}

\put(65.4,31.4){\line(0,1){8}}
\put(66.4,30.4){\line(1,0){8}}
\put(75.4,31.4){\line(0,1){8}}
\put(66.4,40.4){\line(1,0){8}}

\put(60.7,25.7){\line(1,1){4}}
\put(70.7,25.7){\line(1,1){4}}
\put(60.7,35.7){\line(1,1){4}}
\put(70.7,35.7){\line(1,1){4}}

\put(0,0){\circle*{2}}
\put(10,0){\circle*{2}}
\put(0,10){\circle*{2}}
\put(10,10){\circle{2}}
\put(5.4,5.4){\circle*{2}}
\put(15.4,5.4){\circle{2}}
\put(5.4,15.4){\circle{2}}
\put(15.4,15.4){\circle{2}}
\put(0,1){\line(0,1){8}}
\put(1,0){\line(1,0){8}}
\put(10,1){\line(0,1){8}}
\put(1,10){\line(1,0){8}}

\put(5.4,6.4){\line(0,1){8}}
\put(6.4,5.4){\line(1,0){8}}
\put(15.4,6.4){\line(0,1){8}}
\put(6.4,15.4){\line(1,0){8}}

\put(0.7,0.7){\line(1,1){4}}
\put(10.7,0.7){\line(1,1){4}}
\put(0.7,10.7){\line(1,1){4}}
\put(10.7,10.7){\line(1,1){4}}

\put(20,0){\circle*{2}}
\put(30,0){\circle*{2}}
\put(20,10){\circle*{2}}
\put(30,10){\circle{2}}
\put(25.4,5.4){\circle*{2}}
\put(35.4,5.4){\circle*{2}}
\put(25.4,15.4){\circle{2}}
\put(35.4,15.4){\circle{2}}
\put(20,1){\line(0,1){8}}
\put(21,0){\line(1,0){8}}
\put(30,1){\line(0,1){8}}
\put(21,10){\line(1,0){8}}

\put(25.4,6.4){\line(0,1){8}}
\put(26.4,5.4){\line(1,0){8}}
\put(35.4,6.4){\line(0,1){8}}
\put(26.4,15.4){\line(1,0){8}}

\put(20.7,0.7){\line(1,1){4}}
\put(30.7,0.7){\line(1,1){4}}
\put(20.7,10.7){\line(1,1){4}}
\put(30.7,10.7){\line(1,1){4}}

\put(40,0){\circle*{2}}
\put(50,0){\circle*{2}}
\put(40,10){\circle*{2}}
\put(50,10){\circle*{2}}
\put(45.4,5.4){\circle*{2}}
\put(55.4,5.4){\circle*{2}}
\put(45.4,15.4){\circle{2}}
\put(55.4,15.4){\circle{2}}
\put(40,1){\line(0,1){8}}
\put(41,0){\line(1,0){8}}
\put(50,1){\line(0,1){8}}
\put(41,10){\line(1,0){8}}

\put(45.4,6.4){\line(0,1){8}}
\put(46.4,5.4){\line(1,0){8}}
\put(55.4,6.4){\line(0,1){8}}
\put(46.4,15.4){\line(1,0){8}}

\put(40.7,0.7){\line(1,1){4}}
\put(50.7,0.7){\line(1,1){4}}
\put(40.7,10.7){\line(1,1){4}}
\put(50.7,10.7){\line(1,1){4}}

\put(60,0){\circle*{2}}
\put(70,0){\circle*{2}}
\put(60,10){\circle*{2}}
\put(70,10){\circle*{2}}
\put(65.4,5.4){\circle*{2}}
\put(75.4,5.4){\circle*{2}}
\put(65.4,15.4){\circle*{2}}
\put(75.4,15.4){\circle{2}}
\put(60,1){\line(0,1){8}}
\put(61,0){\line(1,0){8}}
\put(70,1){\line(0,1){8}}
\put(61,10){\line(1,0){8}}

\put(65.4,6.4){\line(0,1){8}}
\put(66.4,5.4){\line(1,0){8}}
\put(75.4,6.4){\line(0,1){8}}
\put(66.4,15.4){\line(1,0){8}}

\put(60.7,0.7){\line(1,1){4}}
\put(70.7,0.7){\line(1,1){4}}
\put(60.7,10.7){\line(1,1){4}}
\put(70.7,10.7){\line(1,1){4}}
\end{picture}
\end{equation*}
%
\begin{proposition} 
$\lim_{a\to 0 } aE\hat{V}_1(X)$ equals
\begin{align*} V_2(X)\Big({}&(3-4\zeta)(w_1^{(1)}+w_7^{(1)})+(-3+12\zeta-3\sqrt{2})(w_2^{(1)}+w_6^{(1)})\\
&+(3-12\zeta+6\sqrt{2}-2\sqrt{3})(w_3^{(1)}+w_5^{(1)})+(-3+2\sqrt{3})w_{4,1}^{(1)}\\&+(8\zeta-6\sqrt{2}+2\sqrt{3})w_{4,2}^{(1)}\Big)
\end{align*}
where $\zeta =3\sqrt{2}\frac{\arctan(\sqrt{2})}{2\pi}$. 
\end{proposition}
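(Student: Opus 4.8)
The plan is to read the claim off Corollary~\ref{confcor}: specializing \eqref{inth2} to $d=3$ gives $\lim_{a\to 0} aE\hat{V}_1(X) = V_2(X)\sum_{j\in J} w_j^{(1)}\bar{\psi}_j$, so the proposition is exactly the assertion that the eight coefficients $\bar{\psi}_j$ take the stated values. Thus the real work is to evaluate
\begin{equation*}
\bar{\psi}_j = 2\sum_{l:\xi_l\in\eta_j^3}\int_{S^{2}}\bigl(-h(B_l\oplus\check{W}_l,n)\bigr)^+\Ha^{2}(dn)
\end{equation*}
for each of the eight classes. First I would rewrite the integrand: since $h(B\oplus\check{W},n)=\max_{b\in B}\langle b,n\rangle-\min_{w\in W}\langle w,n\rangle$, the quantity $(-h(B\oplus\check{W},n))^+$ equals the separation margin $\bigl(\min_{w\in W}\langle w,n\rangle-\max_{b\in B}\langle b,n\rangle\bigr)^+$, which is supported precisely on the open spherical region $\Omega(B,W)=\{n\in S^2\mid\max_b\langle b,n\rangle<\min_w\langle w,n\rangle\}$ of directions strongly separating black from white; this is why only the separable classes contribute. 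Because the integral over $S^2$ is invariant under the orthogonal maps in $\mathcal{M}$, it is constant across a configuration class, so $\bar{\psi}_j=2|\eta_j^3|\,\psi(B,W)$ for any representative, and it remains to compute the single number $\psi(B,W)=\int_{\Omega(B,W)}\bigl(\min_w\langle w,n\rangle-\max_b\langle b,n\rangle\bigr)\Ha^2(dn)$.

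Next I would assemble the two elementary pieces needed for all eight evaluations. The region $\Omega(B,W)$ is an intersection of open hemispheres $\{\langle b-w,n\rangle<0\}$, hence a geodesic spherical polygon whose edges lie on great circles $\{\langle m_i,n\rangle=0\}$; identifying the active pairs $(b,w)$ reduces, for each class, to the combinatorics of the cube vertices (a single vertex for $\eta_1^3$, a cube edge for $\eta_2^3$, three corners of a face for $\eta_3^3$, a whole face for $\eta_{4,1}^3$, a corner together with its three neighbours for $\eta_{4,2}^3$, and the complements for $\eta_5^3,\eta_6^3,\eta_7^3$). To integrate the piecewise-linear concave margin I would subdivide $\Omega$ into spherical pieces on which $\max_b\langle b,n\rangle$ and $\min_w\langle w,n\rangle$ are each realised by a fixed vertex, so that on each piece the integrand is a single linear form $\langle v,n\rangle$. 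For such a form I would use the spherical identity $\int_{\Omega}\langle v,n\rangle\,\Ha^2(dn)=-\tfrac12\oint_{\partial\Omega}\langle v,\nu\rangle\,ds$, which follows from $\Delta_{S^2}\langle v,\cdot\rangle=-2\langle v,\cdot\rangle$ and Green's theorem; since the outward conormal $\nu$ along a great-circle arc is the constant plane normal $m_i$, this collapses to $-\tfrac12\sum_i\langle v,m_i\rangle\,\ell_i$ with $\ell_i$ the arc lengths, while the constant contributions from the areas of the spherical pieces are supplied by Girard's theorem.

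Carrying this out, the arc lengths $\ell_i$ are angles of the relevant spherical triangles of the cube and produce the transcendental constant $\arctan\sqrt{2}$ (the half-angle at a cube vertex), while the inner products $\langle v,m_i\rangle$ together with the normalisations of the $m_i$ supply the algebraic constants $\sqrt2$ and $\sqrt3$; combined with the $\pi$ from the areas these assemble into $\zeta=3\sqrt2\,\tfrac{\arctan\sqrt2}{2\pi}$ and the stated coefficients. Finally I would record the relation $\bar{\psi}_{j}=\bar{\psi}_{j'}$ for complementary classes, coming from the foreground/background symmetry, which explains the pairings $w_1^{(1)}\leftrightarrow w_7^{(1)}$, $w_2^{(1)}\leftrightarrow w_6^{(1)}$, $w_3^{(1)}\leftrightarrow w_5^{(1)}$ and halves the number of independent computations. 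The main obstacle is computational rather than conceptual: correctly determining the geodesic polygon $\Omega(B,W)$ and its subdivision for each class, and bookkeeping the many arc-length and area contributions without error, since a single misidentified active constraint changes both the region and the value; the $\eta_{4,2}^3$ tripod case, where $\Omega$ surrounds the body diagonal and several edges of $\Omega$ meet, is the most delicate.
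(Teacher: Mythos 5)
Your reduction is exactly the paper's: you read the proposition off \eqref{inth2} in Corollary \ref{confcor}, so the whole content is the evaluation of the eight coefficients $\bar{\psi}_j$, together with the complementation symmetry $\bar{\psi}_j=\bar{\psi}_{8-j}$ (which correctly explains why the sums $w_j^{(1)}+w_{8-j}^{(1)}$ appear here, in contrast to the differences in Theorem \ref{3Dlim}). Where you genuinely depart from the paper is in how the integrals are done: the paper's (omitted) computation follows the same template as the $\bar{\mu}_j$ in Theorem \ref{3Dlim} --- cut $S^2$ by the bisector great circles into the $96$ triangles $T^m_{\alpha\beta\gamma}$, use the cube symmetries and vertex translations to reduce to the fundamental domain $T_0^1\cup T_0^2$, and evaluate explicit double integrals in the spherical parametrization $(\theta,\phi)$ --- whereas you turn each piecewise-linear integrand into boundary data via $\Delta_{S^2}\langle v,\cdot\rangle=-2\langle v,\cdot\rangle$ and Green's theorem, getting $\int_{\Omega}\langle v,n\rangle\,d\Ha^2=-\tfrac12\sum_i\langle v,m_i\rangle\ell_i$ over great-circle arcs. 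This is valid and does reproduce the stated constants: for $\eta_1^3$, with $\Omega$ the positive octant and margin $\min_i n_i$, your formula gives $\int_{T'}n_3\,d\Ha^2=\tfrac{\pi}{4}-\arctan(\sqrt2)/\sqrt2$ over the triangle with vertices $e_1,e_2,\tfrac{1}{\sqrt3}(1,1,1)$, whence $\bar{\psi}_1=\tfrac{12}{\pi}\bigl(\tfrac{\pi}{4}-\tfrac{\arctan\sqrt2}{\sqrt2}\bigr)=3-4\zeta$, matching the coefficient of $w_1^{(1)}$. Two details to repair: first, no Girard/area terms actually arise --- the margin is piecewise linear with no constant part, and the $\pi$'s come from arc lengths (such as the quarter-equator $\tfrac{\pi}{2}$) and from normalizing $dn$ to the probability measure on $S^2$ (the factor $\tfrac{1}{4\pi}$ the paper makes explicit in its $\bar{\mu}_1$ computation), so fix the measure convention before matching coefficients; second, the internal subdividing arcs do not cancel, since the linear form changes across them, and must be kept in the boundary sum. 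On balance your method buys cleaner closed forms and makes the provenance of $\arctan\sqrt2$, $\sqrt2$, $\sqrt3$ transparent, while the paper's route buys a single uniform scheme that also handles the quadratic integrands of $\bar{\mu}_j$, to which your degree-one eigenfunction identity does not apply as stated (one would need the spherical-harmonic decomposition of $h(\cdot,n)^2$ there).
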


\begin{proof}
We must compute the coefficients $\bar{\psi}_j$ in \eqref{inth2}. The computations are similar to the computations of $\bar{\mu}_j$ below, so we leave them out here. \qed
\end{proof}

\begin{theorem}\label{3Dlim}
$\lim_{a\to 0} E\hat{V}_1(X)$ exists if and only if the weights satisfy
\begin{align*}\nonumber
0=\Big({}&(3-4\zeta)(w_1^{(1)}+w_7^{(1)})+(-3+12\zeta-3\sqrt{2})(w_2^{(1)}+w_6^{(1)})\\ 
&+(3-12\zeta+6\sqrt{2}-2\sqrt{3})(w_3^{(1)}+w_5^{(1)})+(-3+2\sqrt{3})w_{4,1}^{(1)}\\
&+(8\zeta-6\sqrt{2}+2\sqrt{3})w_{4,2}^{(1)}\Big)\nonumber
\end{align*}
and in this case
\begin{align*}
\lim_{a\to 0} E\hat{V}_1(X) ={}& V_1(X)\Big((3-\sqrt{3})(w_1^{(1)}-w_7^{(1)})+(3\sqrt{3}-3\sqrt{2})(w_2^{(1)}-w_6^{(1)})\\ 
& + (-3+6\sqrt{2}-3\sqrt{3})(w_3^{(1)}-w_5^{(1)})\Big).
\end{align*}
If $X$ is smooth, the convergence is $O(a)$.
\end{theorem}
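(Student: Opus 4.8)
The plan is to read both halves of the theorem off the general isotropic formula in Corollary~\ref{confcor}. For the existence statement, recall that in the isotropic case $\lim_{a\to 0} E\hat{V}_{1}(X)$ exists precisely when $\lim_{a\to 0} aE\hat{V}_{1}(X)=0$, and that the preceding proposition has already evaluated $\lim_{a\to 0} aE\hat{V}_{1}(X)$ via \eqref{inth2} as $V_{2}(X)$ times the bracketed combination of weights. Since $V_{2}(X)\neq 0$ for a nondegenerate $r$-regular set, the limit exists if and only if that bracket vanishes, which is exactly the first displayed condition. Thus the first half of the theorem is immediate and requires no new computation.

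For the value, Equation~\eqref{isobias} gives $\lim_{a\to 0} E\hat{V}_{1}(X)=V_{1}(X)\sum_{j\in J}w_j^{(1)}\bar{\mu}_j$, so everything reduces to computing the eight coefficients $\bar{\mu}_j$. Here I would first exploit the symmetries in Proposition~\ref{muer}. Complementation of foreground and background sends $\eta_1^3,\eta_2^3,\eta_3^3$ to $\eta_7^3,\eta_6^3,\eta_5^3$, so Proposition~\ref{muer} yields $\bar{\mu}_7=-\bar{\mu}_1$, $\bar{\mu}_6=-\bar{\mu}_2$ and $\bar{\mu}_5=-\bar{\mu}_3$. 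The two four-point classes $\eta_{4,1}^3$ (the bottom-face configuration) and $\eta_{4,2}^3$ (the corner-tetrahedron configuration) are each carried to their own complement by an element of $\mathcal{M}$ (respectively the reflection $z\mapsto 1-z$ and the point reflection in the centre of $C$); combining $\mu_{l_1}=\mu_{l_2}$ for configurations in one class with $\mu_l=-\mu_{2^{2^d}-1-l}$ forces $\bar{\mu}_{4,1}=\bar{\mu}_{4,2}=0$. The sum therefore collapses to $\bar{\mu}_1(w_1^{(1)}-w_7^{(1)})+\bar{\mu}_2(w_2^{(1)}-w_6^{(1)})+\bar{\mu}_3(w_3^{(1)}-w_5^{(1)})$, which already matches the shape of the claimed answer.

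It then remains to evaluate $\bar{\mu}_1,\bar{\mu}_2,\bar{\mu}_3$ from the simplified representation
\begin{equation*}
\bar{\mu}_j=\frac{3\pi}{2}\sum_{l:\xi_l\in\eta_j^3}\int_{S^{2}}\bigl(h(B_l,n)^2-h(\check{W}_l,n)^2\bigr)\delta_{(B_l,W_l)}(n)\,\Ha^{2}(dn).
\end{equation*}
For a fixed representative I would identify the support $\{n\in S^2:\delta_{(B_l,W_l)}(n)=1\}$ as the open set of unit vectors strictly separating $B_l$ from $W_l$; for these small configurations it is a spherical triangle or quadrilateral cut out by the great circles $\{\langle b-w,n\rangle=0\}$, $b\in B_l$, $w\in W_l$. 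On such a region $h(B_l,\cdot)$ and $h(\check{W}_l,\cdot)$ are, respectively, the maximum and the negated minimum of the coordinate forms $n\mapsto\langle b,n\rangle$, so the integrand is piecewise quadratic. Subdividing each region according to which vertex realizes the extremum, parametrizing the resulting spherical pieces and integrating the quadratic forms, then multiplying by the cardinality of each class, produces the constants $3-\sqrt{3}$, $3\sqrt{3}-3\sqrt{2}$ and $-3+6\sqrt{2}-3\sqrt{3}$ appearing in the statement.

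The hard part is precisely this last computation: pinning down the correct spherical polygons, carrying out the extremum case analysis on each octant, and integrating the piecewise-quadratic support functions, which is where the surds $\sqrt{2},\sqrt{3}$ enter and where bookkeeping errors are most likely. (The $\arctan(\sqrt{2})$ constant $\zeta$ in the existence condition arises analogously from the $\bar{\psi}_j$ integrals already handled by the preceding proposition.) Finally, the $O(a)$ rate under smoothness is inherited directly from the last sentence of Theorem~\ref{isotropy} via Corollary~\ref{confcor}, since passing to configuration sums and weighting by constants preserves the rate of convergence.
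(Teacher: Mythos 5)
Your reduction is exactly the paper's: existence via Corollary \ref{confcor} together with the preceding proposition (and $V_2(X)>0$ for a nonempty compact $r$-regular set, so the bracket must vanish), the symmetry collapse via Proposition \ref{muer}, the coefficient $\tfrac{3\pi}{2}=\tfrac{d\pi}{d-1}$ from the simplified representation of $\bar{\mu}_j$, and the $O(a)$ rate inherited from the smooth case of Theorem \ref{isotropy}. Your symmetry bookkeeping is in fact slightly more careful than the paper's prose: $\bar{\mu}_{8-j}=-\bar{\mu}_j$ (the paper's proof writes $\bar{\mu}_j=\bar{\mu}_{8-j}$, a sign slip, though its final formula carries the correct minus signs), and your observation that each four-point class is carried to its own complement by an element of $\mathcal{M}$, forcing $\bar{\mu}_{4,1}=\bar{\mu}_{4,2}=0$, is the right argument. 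Replacing the class sum by cardinality times a representative's $\mu_l$ is also legitimate, by the second part of Proposition \ref{muer}.

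The gap is that you stop exactly where the theorem lives: the constants $3-\sqrt{3}$, $3\sqrt{3}-3\sqrt{2}$, $-3+6\sqrt{2}-3\sqrt{3}$ are quoted, not derived, and your sketch (subdivide by great circles, case analysis on the extremal vertex, integrate piecewise quadratics) is a plan rather than a proof. The paper executes it as follows: the great circles $\langle x_{i_1},n\rangle=\langle x_{i_2},n\rangle$ with $x_{i_1},x_{i_2}\in C_0$ cut $S^2$ into $96$ spherical triangles of two congruence types $T^1_{\alpha\beta\gamma}$, $T^2_{\alpha\beta\gamma}$; on the interior of each triangle, for every $k=1,\dots,7$ exactly one $k$-point configuration has $\delta_{(B_l,W_l)}\neq 0$ and the extremal points $p^{\pm}$ are constant there; the rotations $R_{\alpha\beta\gamma}$ transport all triangles to the fundamental pair $T^1_0\cup T^2_0$, with each $x\in C_0$ accounting for six rotations, so that
\begin{equation*}
\bar{\mu}_j=9\pi\sum_{x\in C_0}\int_{T_0^1\cup T_0^2}\bigl(\langle p^+_{B_{l_j}}-x,n\rangle^2-\langle p^-_{W_{l_j}}-x,n\rangle^2\bigr)\,\Ha^{2}(dn),
\end{equation*}
the sum over the eight translates collapsing the integrand to a single quadratic such as $8(n_1+n_2)n_3$, after which spherical coordinates with $\theta\in(0,\tfrac{\pi}{4})$ and $\phi$ up to $\arccos\bigl(\cos\theta/\sqrt{1+\cos^2\theta}\bigr)$ produce the surds. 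Note also a subtlety your sketch glosses over: for $j=3$ the minimizing white vertex $p^-_{W_{l_3}}$ differs between $T_0^1$ and $T_0^2$, and one must check that the summed integrand nevertheless agrees on both pieces. Without carrying out some such explicit evaluation, the second display of the theorem remains unverified, since its entire content is those three numbers.
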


\begin{proof}
By Corollary \ref{confcor} we must compute the coefficients $\bar{\mu}_j$ in \eqref{isobias}. By Proposition \ref{muer}, $\bar{\mu}_{4,1}=\bar{\mu}_{4,2}=0$ and $\bar{\mu}_j=\bar{\mu}_{8-j}$, so it is enough to compute $\bar{\mu}_j$ for $j=1,2,3$.

The hyperplanes $\langle x_{i_1},n\rangle = \langle x_{i_2},n\rangle$ with $x_{i_1},x_{i_2} \in C_0$ divide $S^{2}$ into 96 triangles of two types: 48 triangle $T^1_{\alpha \beta \gamma}$ with vertices 
\begin{equation*}
v_\alpha, \tfrac{1}{\sqrt{2}}(v_\alpha+v_\beta), \tfrac{\sqrt{2}}{\sqrt{3}}\left(v_\alpha+\tfrac{1}{2}(v_\alpha+v_\beta)\right)
\end{equation*}
 and 48 triangles $T_{\alpha \beta \gamma}^2$ with vertices 
\begin{equation*} 
\tfrac{1}{\sqrt{2}}(v_\alpha+v_\beta),\tfrac{\sqrt{2}}{\sqrt{3}}\left(v_\alpha+\tfrac{1}{2}(v_\beta+v_\gamma)\right), \tfrac{1}{\sqrt{3}}(v_\alpha+v_\beta+v_\gamma)
\end{equation*}
  where $\{|\alpha|,|\beta|,|\gamma|\}=\{1,2,3\}$ and $v_{\pm |\alpha|}=\pm e_{|\alpha|}$.

On the interior of each $T^m_{\alpha \beta \gamma}$, all indicator functions $\delta_{(B_l,W_l)}$ and functions $b_l^+$ and $w_l^-$ are constant. For each $k=1,\dots ,7$, there is exactly one configuration containing $k$ points such that $\delta_{{B}_l, W_l}$ is non-zero on $T^m_{\alpha \beta \gamma}$. For $k=4$, this configuration is of type $\eta_{4,1}^3$ on $T^1_{\alpha \beta \gamma }$ and of type $\eta_{4,2}^3$ on $T^2_{\alpha \beta \gamma}$. 

Let $R_{\alpha \beta \gamma}$ be the orthogonal map taking $(v_\alpha,v_\beta,v_\gamma)$ to $(e_\alpha,e_\beta,e_\gamma)$. This takes $T_{\alpha \beta \gamma}^m$ to $T_0^m:=T_{123}^m$
and
$h(B_l,n)=h(R_{\alpha \beta \gamma}B_l,R_{\alpha \beta \gamma }n)$.
Thus 
\begin{equation*}
\int_{T_{\alpha \beta \gamma}^m}h(B_l,n)^2\delta_{(B_l,W_l)}(n) dn=\int_{T_0^m}h(R_{\alpha \beta \gamma}B_l,n)^2\delta_{(R_{\alpha \beta \gamma}B_l,R_{\alpha \beta \gamma}W_l)}(n) dn.
\end{equation*}
There is a unique $x\in C_0$ such that $R_{\alpha \beta \gamma}C+x= C$. Each $x\in C_0$ corresponds to six different $R_{\alpha \beta \gamma}$. Since $\delta_{(R_{\alpha \beta \gamma }B_l,R_{\alpha \beta \gamma }W_l)}(n)=\delta_{(R_{\alpha \beta \gamma }B_l+x,R_{\alpha \beta \gamma}W_l+x)}(n)$,  
\begin{align*}
\bar{\mu}_j &= \pi \sum_{l: \xi_l\in \eta_j^3} 
\int_{S^{d-1}} \frac{d}{d-1}( h(B_l,n)^2- h(\check{W_l},n)^2)\delta_{(B_l,W_l)}(n)dn\\
 &= \frac{3}{2}\pi \sum_{l: \xi_l\in \eta_j^3} \sum_{\alpha \beta \gamma}
\int_{T_{\alpha \beta \gamma}^1\cup T_{\alpha \beta \gamma}^2} (h(B_l,n)^2-h(\check{W_l},n)^2)\delta_{(B_l,W_l)}(n)dn\\
 &= \frac{3}{2}\pi \sum_{l: \xi_l\in \eta_j^3}  \sum_{\alpha \beta \gamma}
\int_{T_{0}^1\cup T_{0}^2} (h(R_{\alpha \beta \gamma}B_l,n)^2 -h(R_{\alpha \beta \gamma}\check{W_l},n)^2)\\ 
&\qquad \qquad \qquad \qquad \times \delta_{(R_{\alpha \beta \gamma}B_l,R_{\alpha \beta \gamma}W_l)}(n)dn\\
 &= \frac{3}{2}\pi \sum_{x\in C_0} 
\int_{T_0^1\cup T_0^2} 6( h(B_{l_j}-x,n)^2- h(\check{W}_{l_j}+x,n)^2) \delta_{(B_{l_j},W_{l_j})}(n)dn.
\end{align*}
where $\xi_{l_j}$ is the unique configuration of type $j$ such that $\delta_{(B_{l_j},W_{l_j})}$ is not everywhere zero on $T_0^1\cup T_0^2$.

For $j=1$, $p_{B_{l_1}}^+=(0,0,0)$ and $p_{W_{l_1}}^-=(0,0,1)$ on both $T_0^1$ and $T_0^2$. From this,
\begin{align*}
\bar{\mu}_1 &= 9\pi \sum_{x\in C_0} 
\int_{T_0^1\cup T_0^2} (\langle(0,0,0)-x,n\rangle^2 -\langle(0,1,0)-x,n\rangle^2)dn\\
&=9\pi \sum_{x\in C_0} \int_{T_0^1\cup T_0^2}8(n_1+n_2)n_3dn.
\end{align*}
where $n=(n_1,n_2,n_3)$. Parametrize the sphere by $(\cos\phi,\cos\theta\sin\phi,\sin\theta\sin\phi)$ with 
$\theta \in (0,2\pi)$ and $\phi\in (0,\pi)$. Then this becomes
\begin{align*}
\bar{\mu}_1 ={}&72\pi \frac{1}{4\pi} \int_0^{\frac{\pi}{4}}\int_0^{\arccos\left(\frac{\cos\theta}{\sqrt{1+\cos^2\theta}}\right)} (\cos\theta\sin\theta\sin^3\phi+\sin\theta\sin^2\phi\cos\phi)d\phi d\theta \\={}& {3-\sqrt{3}}.
\end{align*}

For $j=2$, we get $p_{B_{l_2}}^+=(0,0,1)$ and $p_{W_{l_2}}^+=(0,1,0)$ and thus 
\begin{align*}
\bar{\mu}_2 &= 9\pi \sum_{x\in C_0} 
\int_{T_0^1\cup T_0^2} (\langle(0,0,1)-x,n\rangle^2- \langle(0,1,0)-x,n)^2\rangle dn\\
&=9\pi \sum_{x\in C_0} 
\int_{T_0^1\cup T_0^2} 8(n_2-n_3)n_1dn\\
&= 18 \int_0^{\frac{\pi}{4}}\int_0^{\arccos\left(\frac{\cos\theta}{\sqrt{1+\cos^2\theta}}\right)} (\cos\theta-\sin\theta)\cos\phi \sin^2\phi d\phi d\theta \\&= {3\sqrt{3}-3\sqrt{2}}.
\end{align*}

Finally for $j=3$, $p_{B_{l_3}}^+=(0,1,0)$ and $p_{W_{l_3}}^-=(1,0,0)$ on $T^1_0$, while on $T_0^2$, $p_{W_{l_3}}^-=(0,1,1)$. However, on both triangles
\begin{equation*}
\sum_{x\in C_0} 
(\langle p_{B_{l_3}}^+-x,n\rangle^2- \langle p_{W_{l_3}}^--x,n)^2\rangle =8(n_1-n_2)n_3.
\end{equation*}
and thus
\begin{align*}
\bar{\mu}_3 &=72\pi  \int_{T_0} (n_3-n_1)n_2 dn\\
&=18  \int_0^{\frac{\pi}{4}}\int_0^{\arccos\left(\frac{\cos\theta}{\sqrt{1+\cos^2\theta}}\right)} (\cos\phi-\cos\theta\sin\phi)\sin\theta\sin^2\phi  d\phi d\theta \\
&= {-3\sqrt{3}+6\sqrt{2}-3}.
\end{align*}
Inserting this in \eqref{isobias} proves the claim.\qed
\end{proof}

\section{Unbiased estimators for the Euler characteristic in 2D}\label{EC}
The remainder of this paper is devoted to the case where $\La$ is a stationary non-isotropic lattice. In dimension $d=2$, $V_{d-2}$ is simply the Euler characteristic. In this case, it follows from known results that there exists a unique asymptotically unbiased estimator of the form \eqref{motion}. The existence goes back to Pavlidis \cite{pavlidis} and the uniqueness follows from the results of \cite{rataj}. In this section, we show how this also follows as a consequence of Corollary \ref{confcor}. In contrast, we shall see in Section \ref{higher} that no asymptotically unbiased estimator of the form \eqref{estdef} can exist in dimensions $d\geq 3$. 

Let $X\subseteq \R^2$ be an $r$-regular set observed on a stationary lattice. Observe that the set $A=\{n\in S^1\mid h(B_l\oplus \check{W}_l,n)=0\}$ is finite. If $n(x)\in A$ and $n$ is differentiable at $x$, then either $dn=0$, in which case $\II_x=0$, or $dn\neq 0$ and thus there must be a neighborhood of $x$ where $n\notin A$. Thus \eqref{extra} vanishes in 2D.

Let $\hat{V}_{d-2}$ be a local estimator of the form \eqref{estdef}.
Again we ignore the configurations $\xi_0$ and $\xi_{15}$. Moreover, $\delta_{(B_l,W_l)}$ vanishes for $\xi_6$ and $\xi_{9}$. The remaining configurations fall into one of the following three equivalence classes:

\begin{equation*}
\setlength{\unitlength}{0.088cm}
\begin{picture}(50,16)
\put(3,14){$\eta_1^2$}
\put(23,14){$\eta_{2}^2$}
\put(43,14){$\eta_{3}^2$}

\put(0,0){\circle*{2}}
\put(10,0){\circle{2}}
\put(0,10){\circle{2}}
\put(10,10){\circle{2}}
\put(0,1){\line(0,1){8}}
\put(1,0){\line(1,0){8}}
\put(10,1){\line(0,1){8}}
\put(1,10){\line(1,0){8}}

\put(20,0){\circle*{2}}
\put(30,0){\circle*{2}}
\put(20,10){\circle{2}}
\put(30,10){\circle{2}}
\put(20,1){\line(0,1){8}}
\put(21,0){\line(1,0){8}}
\put(30,1){\line(0,1){8}}
\put(21,10){\line(1,0){8}}

\put(40,0){\circle*{2}}
\put(50,0){\circle*{2}}
\put(40,10){\circle*{2}}
\put(50,10){\circle{2}}
\put(40,1){\line(0,1){8}}
\put(41,0){\line(1,0){8}}
\put(50,1){\line(0,1){8}}
\put(41,10){\line(1,0){8}}

\end{picture}
\end{equation*}

For $d=2$, Theorem \ref{nonstat} reduces to:
\begin{corollary}\label{cor2D}
Let $X\subseteq \R^2$ be a compact $r$-regular set observed on a stationary non-isotropic lattice and let $\xi_l$ be a con\-fi\-gu\-ra\-tion. Then
\begin{align*}
\lim_{a\to 0}{}&(EN_l-a^{-1}\lim_{a\to 0}aEN_l)\\ 
&=\frac{1}{2} \int_{\partial X}(2(h(B_l,n)^2-h(W_l,n)^2)-(|p_{B_l}^+|^2-|p_{W_l}^-|^2))\delta_{(B_l,W_l)}dC_0(X;\cdot)\\
&=\frac{1}{2\pi}\bar{\mu}_lV_0(X).
\end{align*}
Here $C_0(X;\cdot)$ is the 0th curvature measure given by $C_0(X;A)=\int_{A\cap \partial X} k d\Ha^1$. 
\end{corollary}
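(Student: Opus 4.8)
The plan is to specialize Theorem~\ref{nonstat} to $d=2$ and then convert the resulting boundary integral into a spherical integral by means of the Gauss map. Since Corollary~\ref{confcor} already packages the combination of Theorem~\ref{nonstat} with the identity~\eqref{HEN}, I would start from its non-isotropic half, which gives
\[
\lim_{a\to 0}\big(EN_l-a^{-1}\lim_{a\to 0}aEN_l\big)=\lambda_l(X).
\]
By the observation recorded immediately before the statement, the term~\eqref{extra} vanishes in two dimensions, so $\lambda_l(X)$ collapses to its first summand $\tfrac12\int_{\partial X}(Q^+(B_l)-Q^-(W_l))\delta_{(B_l,W_l)}(n)\,d\Ha^1$.

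Next I would make $Q$ explicit for $d=2$. Here $\dim T_x\partial X=1$, so there is a single principal curvature $k$, giving $\II_x(s)=k\,|\pi_x(s)|^2$ and $\tr\II_x=k$. Writing $s=\pi_x(s)+\langle s,n\rangle n$ and using $|\pi_x(s)|^2=|s|^2-\langle s,n\rangle^2$ yields $Q_x(s)=k\,(2\langle s,n\rangle^2-|s|^2)$. Evaluating at the support points $p_{B_l}^+$ and $p_{W_l}^-$, for which $\langle p_{B_l}^+,n\rangle=h(B_l,n)$ and $\langle p_{W_l}^-,n\rangle^2=h(\check{W}_l,n)^2$ (the term occurring in the definition of $\mu_l$), gives
\[
Q^+(B_l)-Q^-(W_l)=k\big(2(h(B_l,n)^2-h(\check{W}_l,n)^2)-(|p_{B_l}^+|^2-|p_{W_l}^-|^2)\big)
\]
away from a $C_0$-null set where the support sets fail to be unique. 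Since $dC_0(X;\cdot)=k\,d\Ha^1$, substituting this is exactly the first displayed equality.

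For the second equality the substance is a change of variables under the Gauss map $n\colon\partial X\to S^1$: the integrand is a bounded measurable function $g$ of $n$ alone, and I would prove that for every such $g$
\[
\int_{\partial X}g(n(x))\,dC_0(X;x)=V_0(X)\int_{S^1}g(n)\,dn.
\]
To this end I would parametrize each connected component $\Gamma$ of $\partial X$ by arc length; because $X$ is $r$-regular, $\partial X$ is $C^1$ and $n$ is Lipschitz, so the normal angle $\theta(s)$ is absolutely continuous with $\theta'(s)=k(s)$ a.e. Then $g(n)\,dC_0=g(\theta)\,d\theta$ along $\Gamma$, and since $g$ is $2\pi$-periodic while $\theta$ traverses a closed loop of rotation index $i(\Gamma)$, splitting $g$ into its mean and a mean-zero remainder shows $\int_\Gamma g(\theta)\,d\theta=i(\Gamma)\int_{S^1}g\,dn$. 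Summing over components and invoking Gauss--Bonnet (Hopf's Umlaufsatz) in the form $\sum_\Gamma i(\Gamma)=\chi(X)=V_0(X)$ gives the identity; applying it to our $g$ and comparing with the $d=2$ form of $\mu_l$ yields $\tfrac1{2\pi}\mu_l V_0(X)$.

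I expect the third step to be the main obstacle, not the routine algebra for $Q$. The delicate points are that for $r$-regular (merely $C^1$, only a.e.\ twice differentiable) boundaries the signed curvature makes $C_0(X;\cdot)$ a signed measure, that non-convex components contribute negative rotation indices, and that one must justify $\int_\Gamma g(\theta)\,d\theta=i(\Gamma)\int_{S^1}g\,dn$ for merely bounded measurable $g$ and absolutely continuous, non-monotone $\theta$. It is precisely the bookkeeping $\sum_\Gamma i(\Gamma)=V_0(X)$ that produces the Euler characteristic on the right-hand side.
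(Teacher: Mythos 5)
Your proposal is correct and follows essentially the same route as the paper: specialize Theorem~\ref{nonstat} via the non-isotropic part of Corollary~\ref{confcor}, invoke the observation preceding the statement that \eqref{extra} vanishes for $d=2$, compute $Q_x(s)=k\,(2\langle s,n\rangle^2-|s|^2)$ at the support points, and push the resulting integral forward under the Gauss map. The only difference is that the paper simply cites the identity $C_0(X;\cdot)\circ n^{-1}=2\pi V_0(X)\Ha^1$ where you prove it via rotation indices and the Umlaufsatz; you also correctly read the paper's $h(W_l,n)^2$ as $h(\check{W}_l,n)^2$, consistent with the definition of $\mu_l$.
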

The second equality uses the identity $C_0(X;\cdot) \circ n^{-1}=2\pi V_0(X)\Ha^1$ as measures on $S^1$. 

From this we first obtain the following criterion for the existence of an asymptotic mean:
\begin{proposition}\label{2Dconv}
$\lim_{a\to 0}E\hat{V}_0(X)$ exists for all $X$ if and only if 
\begin{equation}\label{wcondition}
w_2^{(0)}=0 \textrm{ and } w_1^{(0)}=-w_3^{(0)}.
\end{equation}
\end{proposition}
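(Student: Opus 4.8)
The plan is to turn the existence question into a single algebraic identity among the weights by invoking Corollary \ref{confcor}, and then to pin down the two scalar conditions by computing the relevant support-function integrands explicitly. First I would specialize Corollary \ref{confcor} to $d=2$, so that $\hat V_{d-2}=\hat V_0$, in the stationary non-isotropic case. It says that $\lim_{a\to 0}E\hat V_0(X)$ exists if and only if $\lim_{a\to 0}aE\hat V_0(X)=0$, and that the latter limit equals $\sum_j w_j^{(0)}\bar\varphi_j(X)$, where $j$ ranges over the three surviving classes $\eta_1^2,\eta_2^2,\eta_3^2$ singled out at the start of this section (recall $\xi_0,\xi_{15}$ are excluded and $\delta_{(B_l,W_l)}$ vanishes on $\xi_6,\xi_9$). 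Thus \eqref{wcondition} is equivalent to the assertion that $\sum_{j=1}^3 w_j^{(0)}\bar\varphi_j(X)=0$ for every $r$-regular $X\subseteq\R^2$.

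Next I would rewrite each $\bar\varphi_j$ as an integral over $S^1$. Since the integrand $(-h(B_l\oplus\check W_l,n(x)))^+$ depends on $x$ only through the normal $n(x)$, one has $\bar\varphi_j(X)=\int_{S^1}g_j\,S_1(X,\cdot)$, where $S_1(X,\cdot)$ is the push-forward of $\Ha^1$ on $\partial X$ under the Gauss map and $g_j(n)=\sum_{l:\xi_l\in\eta_j^2}(-h(B_l\oplus\check W_l,n))^+$. Using the identity $(-h(B\oplus\check W,n))^+=(\min_{b\in B,w\in W}\langle w-b,n\rangle)^+$, a short case analysis over the four members of each class yields, for $n=(n_1,n_2)\in S^1$, that $g_1(n)=g_3(n)=\min(|n_1|,|n_2|)$ and $g_2(n)=\big||n_1|-|n_2|\big|$. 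Consequently the weighted combination collapses to $(w_1^{(0)}+w_3^{(0)})\,g_1+w_2^{(0)}\,g_2$.

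The \emph{if} direction is then immediate: when $w_2^{(0)}=0$ and $w_1^{(0)}=-w_3^{(0)}$ the combination $(w_1^{(0)}+w_3^{(0)})g_1+w_2^{(0)}g_2$ vanishes identically, so $\sum_j w_j^{(0)}\bar\varphi_j(X)=0$ for all $X$ and the limit exists. For the \emph{only if} direction, I would exploit the richness of the realizable surface area measures. A planar convex body is $r$-regular precisely when its radius of curvature is everywhere at least $r$ (the inner ball condition; the outer ball condition is automatic from the supporting line), so by Minkowski's existence theorem its surface area measure can be any positive measure on $S^1$ with smooth density $\geq r$ and centroid at the origin. Differences of such measures span all signed measures with vanishing first moment, whence the hypothesis forces $(w_1^{(0)}+w_3^{(0)})g_1+w_2^{(0)}g_2$ to agree a.e. with a linear functional $u\mapsto\langle c,u\rangle$. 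But $g_1$ and $g_2$ are even and $\tfrac{\pi}{2}$-periodic on $S^1$, hence carry no first harmonic, while $\langle c,u\rangle$ is odd; comparing Fourier components gives $c=0$ and $(w_1^{(0)}+w_3^{(0)})g_1+w_2^{(0)}g_2\equiv 0$. Since $g_1,g_2$ are linearly independent (at $n=\tfrac1{\sqrt2}(1,1)$ one has $g_2=0<g_1$, and at $n=(1,0)$ one has $g_1=0<g_2$), this yields $w_1^{(0)}+w_3^{(0)}=0$ and $w_2^{(0)}=0$, i.e. \eqref{wcondition}.

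The main obstacle I anticipate is the step passing from ``the integral vanishes for every $X$'' to the pointwise identity $\sum_j w_j^{(0)}g_j\equiv 0$: it requires both the density of realizable surface area measures among $r$-regular sets and a careful treatment of the unavoidable centroid constraint $\int_{S^1}u\,S_1(X,\cdot)=0$, which a priori only pins the combination down modulo linear functionals. Removing that spurious linear term is exactly what the evenness and $\tfrac{\pi}{2}$-periodicity of $g_1$ and $g_2$ accomplish, so the harmonic-content observation is the key that closes the argument.
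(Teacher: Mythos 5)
Your proof is correct, and its first half coincides with the paper's: the same reduction via Corollary \ref{confcor} to the condition $\sum_{j=1}^3 w_j^{(0)}\bar\varphi_j(X)=0$ for all $X$, the same computation $g_1=g_3=\min\{|n_1|,|n_2|\}$ and $g_2=\max\{|n_1|,|n_2|\}-\min\{|n_1|,|n_2|\}$, and the same immediate sufficiency of \eqref{wcondition}. Where you genuinely diverge is the necessity direction. The paper settles it in one sentence with an explicit one-parameter family of test sets: for $X=[0,(0,x)]\oplus B(r)$ the boundary decomposes into a full circle of radius $r$ plus two straight sides of length $x$ with normals $\pm e_1$, so the functional is affine in $x$ with slope proportional to $w_2^{(0)}$, forcing $w_2^{(0)}=0$; the circular part then forces $w_1^{(0)}+w_3^{(0)}=0$ since $\int_{S^1}\min\{|n_1|,|n_2|\}\,\Ha^1(dn)>0$. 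You instead realize, via Minkowski's existence theorem and Blaschke's rolling theorem (the outer ball being automatic for convex sets), all smooth densities $\geq r$ with vanishing first moment as length measures of $r$-regular convex bodies, conclude that $(w_1^{(0)}+w_3^{(0)})g_1+w_2^{(0)}g_2$ must agree a.e.\ with a first harmonic $u\mapsto\langle c,u\rangle$, and eliminate that harmonic by the parity and $\tfrac{\pi}{2}$-periodicity of $g_1,g_2$ before evaluating at $\tfrac{1}{\sqrt2}(1,1)$ and $(1,0)$; your constant-shift trick correctly disposes of the centroid constraint, which is the one subtle point in this route. What your argument buys is generality: it characterizes exactly which integrands yield functionals vanishing on all $r$-regular sets (those in the span of the linear functions), rather than just testing two families. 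The price is machinery (Minkowski existence, rolling theorem, Fourier components) where the paper needs only stadium-shaped sets --- incidentally the same kind of counterexamples, $P\oplus B(r)$, reused in Theorems \ref{non} and \ref{main'}.
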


\begin{proof}
By Corollary \ref{cor2D}, $\lim_{a\to 0}E\hat{V}_0(X)$ exists if and only if
\begin{equation}\label{limval}
\sum_{j=1}^3 w_j^{(0)} \bar{\varphi}_j(X)=0.
\end{equation}
Write $n=(n_1,n_2)\in S^1\subseteq \R^2$. Then for $j=1,3$,
\begin{equation*}
\sum_{l:\xi_l\in \eta_j^2}(-h(B_l\oplus\check{W}_{l},n))^+=\min\{|n_1|,|n_2|\},
\end{equation*}
wheras
\begin{equation*}
\sum_{l:\xi_l\in \eta_2^2} (-h(B_l\oplus\check{W}_{l},n))^+=\max\{|n_1|,|n_2|\}-\min\{|n_1|,|n_2|\}.
\end{equation*}
Hence the equation \eqref{limval} becomes
\begin{equation*}
\int_{\partial X}\big(\big(w_1^{(0)}+w_3^{(0)}-w_2^{(0)}\big)\min\{|n_1|,|n_2|\}+w^{(0)}_2\max\{|n_1|,|n_2|\}\big)d\Ha^{1}=0.
\end{equation*}
This holds for all $X$ if $w_1^{(0)}+w_3^{(0)}=w_2^{(0)}=0$. On the other hand, this is a necessary condition, as one may realize e.g.\ by considering sets of the form $[0,(0,x)]\oplus B(r)$ where $[x,y] $ denotes the line segment from $x$ to $y$.\qed
\end{proof}

\begin{theorem}\label{2Dunbiased}
For an estimator satisfying \eqref{wcondition},
\begin{equation*}
\lim_{a\to 0}E\hat{V}_0(X)= 2\big(w_1^{(0)}-w_3^{(0)}\big)V_0(X).
\end{equation*}
Thus the estimator with weights
\begin{equation*}
w_1^{(0)}=-w_3^{(0)}=\frac{1}{4} \textrm{ and } w_2^{(0)}=0
\end{equation*}
is the unique asymptotically unbiased estimator for the Euler characteristic of the form \eqref{motion} in the non-isotropic setting.
\end{theorem}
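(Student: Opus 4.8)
The plan is to reduce the first assertion to Corollary~\ref{cor2D} and then evaluate three explicit spherical integrals. Under the hypothesis \eqref{wcondition}, Proposition~\ref{2Dconv} guarantees that $\lim_{a\to 0}E\hat{V}_0(X)$ exists, and Corollary~\ref{cor2D} identifies the contribution of each configuration class $\eta_j^2$ to this limit as $\frac{1}{2\pi}\bar{\mu}_j V_0(X)$, where $\bar{\mu}_j=\sum_{l:\xi_l\in\eta_j^2}\mu_l$. Summing over the three relevant classes gives
\begin{equation*}
\lim_{a\to 0}E\hat{V}_0(X)=\frac{1}{2\pi}V_0(X)\sum_{j=1}^3 w_j^{(0)}\bar{\mu}_j,
\end{equation*}
so everything comes down to computing $\bar{\mu}_1,\bar{\mu}_2,\bar{\mu}_3$ and checking that, once \eqref{wcondition} is imposed, the bracket collapses to $2(w_1^{(0)}-w_3^{(0)})$.

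First I would dispose of two of the three constants by symmetry. The class $\eta_2^2$ is self-complementary, since interchanging the two black and the two white pixels of an edge configuration again yields an edge configuration; hence Proposition~\ref{muer} pairs each $l\in\eta_2^2$ with $2^{2^2}-1-l\in\eta_2^2$ via $\mu_l=-\mu_{2^{2^2}-1-l}$, and these pairs cancel, giving $\bar{\mu}_2=0$. Likewise $\eta_3^2$ is the complement class of $\eta_1^2$, so the same proposition yields $\bar{\mu}_3=-\bar{\mu}_1$. Thus already $\lim_{a\to 0}E\hat{V}_0(X)=\frac{1}{2\pi}\bar{\mu}_1(w_1^{(0)}-w_3^{(0)})V_0(X)$ under \eqref{wcondition}.

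The one genuine computation is $\bar{\mu}_1$. I would take the representative configuration with $B=\{(0,0)\}$ and $W=\{(1,0),(0,1),(1,1)\}$. Here $h(B,n)=0$ and $p_B^+=(0,0)$, while $\delta_{(B,W)}(n)=1$ precisely on the open first-quadrant arc of $S^1$; on that arc $p_W^-(n)\in\{(1,0),(0,1)\}$, so $|p_W^-|^2=1$ and $h(\check{W},n)^2=\min\{n_1,n_2\}^2$. The integrand of $\mu_l$ then reduces to $1-2\min\{n_1,n_2\}^2$, and parametrizing $n=(\cos\theta,\sin\theta)$ and splitting at $\theta=\pi/4$ turns this into an elementary integral equal to $1$, so $\mu_l=\pi$; multiplying by the four members of the class gives $\bar{\mu}_1=4\pi$, i.e.\ $\frac{1}{2\pi}\bar{\mu}_1=2$. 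This establishes the first assertion $\lim_{a\to 0}E\hat{V}_0(X)=2(w_1^{(0)}-w_3^{(0)})V_0(X)$.

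Finally, unbiasedness means this limit equals $V_0(X)$ for every $X$, i.e.\ $2(w_1^{(0)}-w_3^{(0)})=1$. Combined with the two conditions of \eqref{wcondition}, namely $w_2^{(0)}=0$ and $w_1^{(0)}+w_3^{(0)}=0$, this is a system of three independent linear equations in the three class weights whose unique solution is $w_1^{(0)}=\tfrac14$, $w_3^{(0)}=-\tfrac14$, $w_2^{(0)}=0$, yielding both existence and uniqueness. The main, and essentially only nontrivial, obstacle is the evaluation of $\bar{\mu}_1$; the remainder is symmetry bookkeeping through Proposition~\ref{muer} together with a $3\times 3$ linear solve.
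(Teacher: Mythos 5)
Your proof is correct and follows the paper's own route: reduce via Corollary~\ref{cor2D} (with existence of the limit secured by Proposition~\ref{2Dconv}) to the coefficients $\bar{\mu}_j$, and then solve the resulting linear condition together with \eqref{wcondition}. The only difference is that the paper imports the values of $\bar{\mu}_j$ from \cite[Section 8]{am}, whereas you derive them in place --- $\bar{\mu}_2=0$ and $\bar{\mu}_3=-\bar{\mu}_1$ from the complementation and symmetry identities of Proposition~\ref{muer}, and $\bar{\mu}_1$ from the explicit first-quadrant arc integral of $1-2\min\{n_1,n_2\}^2$ --- and your evaluation $\tfrac{1}{2\pi}\bar{\mu}_1=2$ reproduces exactly the limit $2\big(w_1^{(0)}-w_3^{(0)}\big)V_0(X)$ and hence the unique weights $w_1^{(0)}=-w_3^{(0)}=\tfrac14$, $w_2^{(0)}=0$.
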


\begin{proof}
Under the condition \eqref{wcondition}, $\lim_{a\to 0} E\hat{V}_0(X)$ is given by Corollary \ref{cor2D} if we can compute the coefficients $\bar{\mu}_j$.
This is done in \cite[Section 8]{am} and it yields
\begin{equation*}
\lim_{a\to 0} E\hat{V}_0(X) = 2\big(w_1^{(0)}-w_3^{(0)}\big)V_0(X)=4w_1^{(0)}V_0(X)
\end{equation*}
as claimed.\qed
\end{proof}

\section{Non-existence of unbiased estimators for $V_{d-2}$ in higher dimensions}\label{higher}
We now consider estimators of the form \eqref{motion} for $V_{d-2}$ in dimensions $d\geq 3$ in the design based setting where an $r$-regular set $X\subseteq \R^d$ is observed on a stationary non-isotropic lattice $a\La$.
Contrary to the $d=2$ case, we shall see that in higher dimensions there are no asymptotically unbiased estimators based on $2\times \dotsm \times 2$ configurations. The proof goes by constructing counterexamples. These are all of the form $P\oplus B(r)$ where $P$ is a polygon.

We first show a small lemma that will simplify the proofs:
\begin{lemma} \label{kryds}
Let $\xi_l$ be a configuration. For $u_1,\dots,u_k\in \R^d\backslash \{0\}$ or\-tho\-go\-nal and $X = (\bigoplus_{i=1}^k [0,u_i]) \times S^{d-k-1}(u_1,\dots , u_k)$,
\begin{equation*}
\int_{ X}(\II^+(B_l)-\II^-(W_l))^+\mathds{1}_{\{h(B_l\oplus \check{W}_l,n)=0\}}d\Ha^{d-1} =0.
\end{equation*}
\end{lemma}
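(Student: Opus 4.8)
The plan is to identify $X$ with the stratum of the boundary $\partial(P\oplus B(r))$ lying over the relative interior of $P=\bigoplus_{i=1}^k[0,u_i]$, the set $P\oplus B(r)$ being $r$-regular. Writing $V=\spa\{u_1,\dots,u_k\}$, this stratum is parametrized by pairs $(p,\omega)$ with $p$ in the relative interior of $P$ and $\omega\in S^{d-k-1}(V^\perp)$ the outward unit normal, the boundary point being $p+r\omega$. First I would observe that every quantity entering the integrand — the normal $n=\omega$, the support sets $(B_l)_+(\omega)$ and $(W_l)_-(\omega)$, the value $h(B_l\oplus\check W_l,\omega)$, and (as computed below) $\II_x$ — depends only on $\omega$ and not on $p$. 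Since $\Ha^{d-1}$ restricted to this product stratum factors as $\Ha^k|_P\otimes r^{d-k-1}\,\Ha^{d-k-1}|_{S^{d-k-1}(V^\perp)}$, the integral equals a positive constant times $\int_{S^{d-k-1}(V^\perp)}(\II^+(B_l)-\II^-(W_l))^+\mathds{1}_{\{h(B_l\oplus\check W_l,\omega)=0\}}\,d\Ha^{d-k-1}(\omega)$, so it suffices to show this spherical integral vanishes.

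Next I would compute $\II_x$ on the stratum. At $x=p+r\omega$ the principal curvatures are $0$ in the $k$ directions tangent to $V$ and $1/r$ in the $d-k-1$ directions tangent to $S^{d-k-1}(V^\perp)$. Since $\pi_x$ is just projection onto $\omega^\perp$, this yields $\II_x(s)=\II_x(\pi_x(s))=\tfrac1r\big(|\pi_{V^\perp}(s)|^2-\langle s,\omega\rangle^2\big)$ for $s\in\R^d$, where $\pi_{V^\perp}$ is orthogonal projection onto $V^\perp$. On the set $\{h(B_l\oplus\check W_l,\omega)=0\}$ the common value $c=h(B_l,\omega)=-h(\check W_l,\omega)$ is attained by every $b\in(B_l)_+(\omega)$ and every $w\in(W_l)_-(\omega)$, so $\II_x(b)=\tfrac1r(|\pi_{V^\perp}(b)|^2-c^2)$ and likewise for $w$; thus the sign of $\II^+(B_l)-\II^-(W_l)$ is governed by the comparison of $\max_{b\in(B_l)_+(\omega)}|\pi_{V^\perp}(b)|^2$ with $\min_{w\in(W_l)_-(\omega)}|\pi_{V^\perp}(w)|^2$.

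The decisive step is to decompose $S^{d-k-1}(V^\perp)$ by the finite hyperplane arrangement $\{\langle b-b',\omega\rangle=0\}$, $b,b'\in B_l$, together with its $W_l$-analogue, into finitely many open cells on which $(B_l)_+(\omega)\equiv\beta$ and $(W_l)_-(\omega)\equiv\gamma$ are constant; the cell boundaries are lower dimensional, hence $\Ha^{d-k-1}$-null. On a fixed cell, $h(B_l\oplus\check W_l,\omega)=\langle b-w,\omega\rangle$ for any $b\in\beta$, $w\in\gamma$. If this is not identically zero, then $\{h=0\}$ meets the cell in a subset of a great subsphere, which is null and contributes nothing. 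If it is identically zero, then, a nonempty open subset of $S^{d-k-1}(V^\perp)$ spanning $V^\perp$, we obtain $\pi_{V^\perp}(b)=\pi_{V^\perp}(w)$ for all $b\in\beta$, $w\in\gamma$; fixing $w_0\in\gamma$ forces $\pi_{V^\perp}(b)=\pi_{V^\perp}(w_0)$ for every $b\in\beta$ and $\pi_{V^\perp}(w)=\pi_{V^\perp}(w_0)$ for every $w\in\gamma$, whence $\max_{b\in\beta}|\pi_{V^\perp}(b)|^2=\min_{w\in\gamma}|\pi_{V^\perp}(w)|^2$ and therefore $\II^+(B_l)=\II^-(W_l)$. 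Thus the integrand vanishes on such cells, and summing over the finitely many cells gives the claim.

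I expect the main obstacle to be precisely the case where $h(B_l\oplus\check W_l,\cdot)$ vanishes on a full-dimensional cell — a genuine possibility when a black and a white vertex share the same projection onto $V^\perp$. There the indicator does not localize to a null set, and one must instead argue that the curvature comparison degenerates, so that $(\II^+(B_l)-\II^-(W_l))^+=0$ on that cell. The complementary cells, where $\{h=0\}$ is a measure-zero great subsphere, are handled routinely.
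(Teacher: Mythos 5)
Your proof is correct and is in substance the same argument as the paper's: writing $V=\spa(u_1,\dots,u_k)$, the paper fixes the optimizing pair $(b,w)=(s^+(x),s^-(x))$ from Lemma \ref{pm}, sets $v=b-w$, and observes that either the $V^\perp$-component of $v$ is nonzero, in which case $\{x\in X\mid \langle n(x),v\rangle =0\}$ is $\Ha^{d-1}$-null, or it vanishes, in which case $\II(b)=\II(w)$ and the integrand is zero pointwise --- exactly your dichotomy, reached there by a finite case split over pairs rather than your cell decomposition of the sphere. One small repair to your decomposition: if two points $b\neq b'$ of $B_l$ (or of $W_l$) differ by a vector of $V$ --- which does occur for the configurations to which the lemma is later applied, e.g.\ vertices differing by $e_4\in V$ in Theorem \ref{main'} --- then $\langle b-b',\omega\rangle=0$ for \emph{every} $\omega\in V^\perp$, so the corresponding hyperplane contains the whole sphere, your arrangement has no open cells, and the claim that the cell boundaries are lower dimensional fails as stated. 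The fix is immediate: discard these degenerate hyperplanes from the arrangement; on the open cells of the reduced arrangement the support sets $(B_l)_+(\omega)$ and $(W_l)_-(\omega)$ are still constant (points that tie on a nonempty open subset of $S^{d-k-1}(V^\perp)$ have equal $V^\perp$-projections, by your own spanning argument, so such ties hold globally), and the rest of your proof, including the decisive $h\equiv 0$ case where the curvature comparison degenerates, goes through unchanged.
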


Here $S^{d-k-1}(u_1,\dots , u_k)$ denotes the unit sphere in $\spa(u_1,\dots,u_k)^\perp $.
\begin{proof}
If $h(B_l\oplus \check{W}_l,n)=0$, there are $b\in B_l$ and $w \in W_l$ with $\II^+(B_l)=\II(b)$, $\II^-(W_l)=\II(w)$, and $\langle b-w,n\rangle$. Let $v=b-w \neq 0$ and for $y\in \R^d$, write $y=y_1+y_2$ where $y_1$ is the projection of $y$ onto
$\spa(u_1,\dots,u_k)$. Observe that $n(x) = n_2(x) $ for all $x\in X$. Thus the set $\{x\in X \mid \langle n,v \rangle=\langle n_2,v_2 \rangle=0\}$ can only have positive $\Ha^{d-1}$-measure if $v_2=0$, that is, if $b_2=w_2$. But then the claim follows since $\II(b)=\II(b_2)=\II(w_2)=\II(w)$.
\end{proof}

\begin{theorem}\label{non}
For $d=3 $, there exists no asymptotically unbiased estimator for $V_{1}$ of the form \eqref{motion} on the class of $r$-regular sets.
\end{theorem}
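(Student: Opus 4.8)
The plan is to use Corollary~\ref{confcor} to recast asymptotic unbiasedness as two linear conditions on the weights, and then to evaluate these on a one-parameter family of spherocylinders, on which they turn into incompatible functional equations in the cylinder direction.

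By Corollary~\ref{confcor}, in the stationary non-isotropic setting the limit $\lim_{a\to0}E\hat V_1(X)$ exists only if $\sum_{j\in J}w_j\bar\varphi_j(X)=0$, and it then equals $\sum_{j\in J}w_j\bar\lambda_j(X)$, where I abbreviate $w_j=w_j^{(1)}$. Hence an asymptotically unbiased estimator of the form \eqref{motion} would have to satisfy, for every compact $r$-regular $X$,
\begin{equation*}
\sum_{j\in J}w_j\bar\varphi_j(X)=0\quad\text{and}\quad\sum_{j\in J}w_j\bar\lambda_j(X)=V_1(X).
\end{equation*}
I would contradict this by testing on the spherocylinders $X_u=[0,u]\oplus B(r)$ for $u\in\R^3\setminus\{0\}$.

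Next I would split $\partial X_u$ into the cylinder over the relative interior of $[0,u]$ and the two spherical caps over its endpoints. On the cylinder the outer normal runs over the great circle $S^1(\hat u)=S^2\cap\hat u^\perp$ (with $\hat u=u/|u|$), the principal curvatures are $0$ along $\hat u$ and $1/r$ circumferentially, and $\Ha^2$ factors as length times arclength; over the caps the normal sweeps out all of $S^2$. Since the integrands of $\bar\varphi_j$ and, through \eqref{basisterm}, of $\bar\lambda_j$ depend on $x$ only via $n(x)$ and the curvature, the cap contributions are rotation invariant, hence independent of both $\hat u$ and $|u|$, whereas the cylinder contributions have the form $|u|\,\Psi_j(\hat u)$ and $|u|\,\Lambda_j(\hat u)$ for explicit functions on $S^2$. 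The term \eqref{extra} in $\bar\lambda_j$ drops out: on the caps $\{h(B_l\oplus\check W_l,n)=0\}$ is a great circle, of $\Ha^2$-measure zero, and on the cylinder it vanishes by Lemma~\ref{kryds} applied to the local model $(0,u)\times S^1(\hat u)$. Computing the integrated mean curvature gives $V_1(X_u)=|u|+4r$, so splitting the two displayed conditions into their parts that are constant and linear in $|u|$ yields, for all $\hat u\in S^2$,
\begin{equation*}
\sum_{j\in J}w_j\Psi_j(\hat u)=0\quad\text{and}\quad\sum_{j\in J}w_j\Lambda_j(\hat u)=1.
\end{equation*}

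Finally I would evaluate $\Psi_j$ and $\Lambda_j$ and show these equations have no common solution. Because all cylinder normals lie in $\hat u^\perp$ and $\II_x$ is carried by the circumferential direction $e_\perp=n\times\hat u$, one has $Q_x(s)=r^{-1}(\langle s,n\rangle^2-\langle s,e_\perp\rangle^2)$, so both $\Psi_j(\hat u)$ and $\Lambda_j(\hat u)$ are circle integrals over $S^1(\hat u)$ of quantities determined by the orthogonal projections of the cube vertices $B_l,W_l$ onto $\hat u^\perp$. When $\hat u$ is a coordinate direction these projections are exactly the planar unit-square configurations and the equations are solvable, recovering the two-dimensional situation of Theorem~\ref{2Dunbiased}; but for a generic $\hat u$ the projected eight-point configurations are genuinely different and impose further, independent linear conditions on the finitely many weights $w_j$. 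I expect the crux of the argument to be precisely here: carrying out the circle integrals $\Psi_j,\Lambda_j$ for enough directions — coordinate axes, face diagonals and space diagonals, or equivalently integrating the two functional equations against suitable test functions on $S^2$ — and organising the resulting relations to exhibit the over-determination, while carefully tracking which classes $\eta_j^3$ are selected by $\delta_{(B_l,W_l)}$ in each direction.
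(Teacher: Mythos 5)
Your setup coincides with the paper's own: the paper also tests on the spherocylinders $X_i=[0,t_iu_i]\oplus B(r)$, splits $\partial X_i$ into the cylinder part and the two spherical caps, kills the term \eqref{extra} via Lemma~\ref{kryds}, and uses exactly your formula $Q_{tu_i+rn}(s)=r^{-1}(\langle s,n\rangle^2-\langle s,u_i\times n\rangle^2)$ on the cylinder, with precisely the three directions you name at the end (a coordinate axis, a face diagonal, a space diagonal). The problem is that you stop exactly where the proof actually happens: you never evaluate the circle integrals, and you only ``expect'' that enough directions overdetermine the system. In fact, the linear-in-$|u|$ conditions for your three named directions are \emph{mutually consistent}: writing $A=w_1-w_7$, $B=w_2-w_6$, $C=w_3-w_5$, the paper computes $h_1=2Bt_1$, $h_2=\sqrt{2}(A+C)t_2$, $h_3=\sqrt{3}(A+B-C)t_3$, so the equations $h_i=t_i$ merely determine $A$, $B$, $C$ uniquely --- no contradiction yet. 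Your hope that a generic direction $\hat u$ yields further independent conditions is unverified, and it is not how the paper closes the argument.

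The decisive fourth equation is the constant-in-$|u|$ part, i.e.\ asymptotic unbiasedness on $B(r)$ itself, and extracting it in usable form requires an idea absent from your proposal: since $B(r)$ is rotation invariant, $EN_l$ is unchanged by randomly rotating the lattice, hence $\bar{\lambda}_l(B(r))=\bar{\mu}_l V_1(B(r))$, so the \emph{isotropic} coefficients $\bar{\mu}_j$ already computed in Theorem~\ref{3Dlim} give the explicit constraint \eqref{wer}, namely $(3-\sqrt{3})A+(3\sqrt{3}-3\sqrt{2})B+(-3+6\sqrt{2}-3\sqrt{3})C=1$, and this is incompatible with the values of $A,B,C$ forced by $h_i=t_i$. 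You do observe that the cap contributions are constant in $|u|$, but you never convert that constant part into an equation, let alone one with computable coefficients. To complete your argument you must either import the isotropic computation as the paper does, or explicitly evaluate $\Lambda_j$ at a fourth direction and verify inconsistency by hand; as written, the proposal establishes only a consistent determination of the three weight differences and therefore does not yet prove nonexistence.
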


In the following we write $w_j=w_j^{(d-2)}$ for simplicity.

\begin{proof}
Assume that $\hat{V}_1$ is an estimator of the form \eqref{motion} and  that the weights have been chosen so that $\lim_{a\to 0}aE\hat{V}_{1}(X) = 0$ and $\lim_{a\to 0}E\hat{V}_{1}(X) = V_1(X)$ for all $r$-regular sets $X$.

In particular, this holds for $X=B(r)$.  Since  $X$ is rotation invariant, a random rotation of $\La$ does not change $EN_l$. Thus $\bar{\lambda}_l(X)=\bar{\mu}_lV_{d-2}(B(r))$, so it follows from Theorem \ref{3Dlim} that the weights must satisfy
\begin{equation}\label{wer}
(3-\sqrt{3})(w_1-w_7)+(3\sqrt{3}-3\sqrt{2})(w_2-w_6)+(-3+6\sqrt{2}-3\sqrt{3})(w_3-w_5)=1.
\end{equation}


We next consider three test sets of the form $X_i=[0,t_iu_i]\oplus B(r)$ for $t_i\in \R$ and $u_1=(1,0,0)$, $u_2=\big(\tfrac{1}{\sqrt{2}},\tfrac{1}{\sqrt{2}},0\big)$ and $u_3=\big(\tfrac{1}{\sqrt{3}},\tfrac{1}{\sqrt{3}},\tfrac{1}{\sqrt{3}}\big)$. Then 
\begin{equation}\label{V1val}
V_1(X_i)= t_i+ 4r =t_i +V_1(B(r)).
\end{equation}

Note that 
\begin{equation*}
\partial X_i = (0+rS^2\cap H^-_{u_i})\cup (t_iu_i+rS^2\cap H^+_{u_i})\cup ([0,t_iu_i]\times rS^1(u_i))
\end{equation*}
 where $H^{\pm}_{u_i}$ denote the halfspaces $\{z\in \R^3\mid \pm \langle z , u_i \rangle \geq 0 \}$ and $rS^1(u_i)$ is the  sphere of radius $r$ in $u_i^\perp$. 
%
Thus by Lemma \ref{kryds},
\begin{align*}
\lambda_l(X)={}& \frac{1}{2}\int_{[0,t_iu_i]\times rS^1(u_i) }(Q^+(B_l)-Q^-(\check{W}_l))\delta_{(B_l,W_l)}d\Ha^{d-1}\\
&+ \frac{1}{2}\int_{rS^{2}}(Q^+(B_l)-Q^-({W}_l))\delta_{(B_l,W_l)}d\Ha^{d-1} \\
={}& \frac{1}{2}\int_{[0,t_iu_i]\times rS^1(u_i) }(Q^+(B_l)-Q^-({W}_l))\delta_{(B_l,W_l)}d\Ha^{d-1} +\lambda_l(B(r)).
\end{align*} 
Combining this with Corollary \ref{confcor} yields
\begin{align*}
\lim_{a\to 0} E{}&\hat{V}_{1}(X_i)-\lim_{a\to 0} E\hat{V}_{1}(B(r))\\
={}&\sum_{j\in J}w_j \sum_{l:\xi_l\in \eta_j^3}\frac{1}{2}\int_{[0,t_iu_i]\times rS^1(u_i) }(Q^+(B_l)-Q^-({W}_l))\delta_{(B_l,W_l)}d\Ha^{d-1}.
\end{align*}

Under the assumption that $\hat{V}_1$ is asymptotically unbiased on both $B(r)$ and $X_i$, \eqref{V1val} shows that the weights must satisfy 
\begin{equation*}
h_i:=\sum_{j\in J} w_j \sum_{l:\xi_l\in \eta_j^3}\frac{1}{2}\int_{[0,t_iu_i]\times rS^1(u_i) }(Q^+(B_l)-Q^-({W}_l))\delta_{(B_l,W_l)}d\Ha^{d-1} = t_i
\end{equation*}
for $i=1,2,3$.

But $Q$ takes a very simple form on $[0,t_iu_i]\times rS^1(u_i)$. Namely, for $t\in [0,t_i]$ and $n\in S^1(u_i)$,
\begin{equation*}
Q_{tu_i+rn}(s)=\tfrac{1}{r}(\langle s,n\rangle^2-\langle s,u_i\times n \rangle^2)
\end{equation*}
where $\times $ is the cross-product in $\R^3$. In particular, $Q_{tu_i+rn}(s)$ depends only on $n$ and the projection of $s$ onto $u_i^\perp$. Hence 
\begin{align*}
h_i=t_i\sum_{j\in J} w_j \sum_{l:\xi_l \in \eta_j^3} \frac{1}{2}\int_{ S^1(u_i)}{}&\Big( \langle b_l^+,n\rangle^2-\langle b_l^+,u_i\times n \rangle^2  \\
& -\langle w_l^-,n\rangle^2+\langle w_l^-,u_i\times n \rangle^2 \Big)\delta_{(B_l,W_l)}(n)\Ha^{1}(dn).
\end{align*}
It is now a straightforward computation to see that 
\begin{align*}
h_1&=2(w_2-w_6)t_1,\\
h_2&=(\sqrt{2}(w_1-w_7)+\sqrt{2}(w_3-w_5))t_2,\\
h_3&=(\sqrt{3}(w_1-w_7)+\sqrt{3}(w_2-w_6)-\sqrt{3}(w_3-w_5))t_3.
\end{align*} 
But no weights can satisfy the three equations $h_i=t_i$ and Equation \eqref{wer} at the same time.\qed
\end{proof}

\begin{theorem}\label{main'}
There are no asymptotically unbiased estimators for $V_{d-2}$ of the form \eqref{motion} in dimension $d\geq 3$.
\end{theorem}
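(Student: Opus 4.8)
The plan is to reduce the general statement to the three-dimensional case already settled in Theorem~\ref{non}, by induction on $d$. Assume the assertion in dimension $d-1\geq 3$, i.e.\ that no local estimator of the form \eqref{motion} is asymptotically unbiased for $V_{(d-1)-2}=V_{d-3}$ on $r$-regular subsets of $\R^{d-1}$, and suppose for contradiction that $\hat V_{d-2}$, with weights $w_j$, were asymptotically unbiased on all $r$-regular sets in $\R^d$. The test sets I would feed it are the cylinders $X_T=X_0'\oplus[0,Te_d]$, where $X_0'=P'\oplus B(r)$ and $P'=\bigoplus_{i=1}^k[0,u_i]$ is a box built from orthonormal vectors $u_1,\dots,u_k$ lying in the hyperplane $e_d^\perp$; these are exactly the counterexamples of the form $P\oplus B(r)$ announced in the introduction, and their orthogonal projection onto $e_d^\perp$ is the $(d-1)$-dimensional body $X':=P'\oplus B(r)\subseteq e_d^\perp$. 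From the behaviour of intrinsic volumes under Minkowski sum with a segment one has $V_{d-2}(X_T)=V_{(d-1)-2}(X')\,T+O(1)$, so the coefficient of $T$ is precisely the $(d-1)$-dimensional integrated mean curvature of $X'$.

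The core of the argument is to identify the $T$-linear part of $\lim_{a\to0}E\hat V_{d-2}(X_T)=\sum_j w_j\bar\lambda_j(X_T)$, supplied by Corollary~\ref{confcor}, with the asymptotic mean of a genuine $(d-1)$-dimensional local estimator evaluated at $X'$. First I would split $\partial X_T$ into the two caps over $P'\times\{0\}$ and $P'\times\{Te_d\}$, which contribute only $O(1)$, and the lateral cylinder over the interior of $[0,Te_d]$, on which the outer normals lie in $e_d^\perp$ and $e_d$ is a principal direction of vanishing curvature. On this lateral part Lemma~\ref{kryds} forces the exceptional term \eqref{extra} to drop out, so only \eqref{basisterm} survives; moreover, since $\langle s,(n',0)\rangle=\langle \pi s,n'\rangle$ and $\II_x$ annihilates $e_d$, the quantities $\delta_{(B_l,W_l)}$, $\II^\pm$, and $Q^\pm$ restricted to the lateral part depend only on the orthogonal projections $\pi B_l,\pi W_l$ onto $e_d^\perp$ and on the $(d-1)$-dimensional second fundamental form of $\partial X'$. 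Integrating along the length-$T$ fibre, the coefficient of $T$ in $\bar\lambda_j(X_T)$ becomes a sum of the $(d-1)$-dimensional functionals $\lambda'$ attached to the projected configurations $(\pi B_l,\pi W_l)$.

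Grouping the $d$-dimensional configurations by their projected shadows then collects the weights $w_j$ into induced $(d-1)$-dimensional weights $w'$, and the computation above says precisely that $X'\mapsto(\text{coefficient of }T\text{ in }\sum_j w_j\bar\lambda_j(X_T))$ equals $\lim_{a\to0}E\hat V'_{d-3}(X')$ for an induced estimator $\hat V'_{d-3}$ of the form \eqref{motion} in $\R^{d-1}$. Since $V_{d-2}(X_T)=V_{(d-1)-2}(X')\,T+O(1)$, asymptotic unbiasedness of $\hat V_{d-2}$ forces this $T$-coefficient to equal $V_{(d-1)-2}(X')$ for every box $P'$; hence $\hat V'_{d-3}$ is asymptotically unbiased on the class of box-thickenings $P'\oplus B(r)$ in $\R^{d-1}$. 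This already contradicts the induction hypothesis, because the lower-dimensional non-existence is witnessed within exactly this subclass, and the base case $d=3$ is Theorem~\ref{non}.

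I expect the delicate step to be the reduction on the lateral cylinder, namely checking cleanly that $\delta_{(B_l,W_l)}$, $Q^\pm$ and $\II^\pm$ really do collapse to functions of the projected configuration $(\pi B_l,\pi W_l)$. The projection $\{0,1\}^d\to\{0,1\}^{d-1}$ is two-to-one, so this bookkeeping must be organized so that the induced weights $w'$ genuinely assemble into an estimator of the required homogeneous, motion-averaged form \eqref{motion}, and so that the subclass of box-thickenings on which unbiasedness descends is precisely the one that witnesses the lower-dimensional non-existence, keeping the induction self-consistent. A secondary technical point is to bound the cap contributions uniformly in $T$ so that the $T$-linear part can legitimately be isolated, which is handled exactly as the separation of sphere and cylinder contributions in the proof of Theorem~\ref{non}.
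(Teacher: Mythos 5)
Your proposal is correct in substance, but it organizes the reduction genuinely differently from the paper. The paper does not induct on the ambient dimension at all: it fixes $d$, tests against the family $P\oplus B(r)$ with $P=\bigoplus_{i=1}^k[0,t_iu_i]$, and proves by induction on the number $k$ of segment factors the quantitative identity \eqref{square}, using the decomposition of $\partial(P\oplus B(r))$ into relatively open faces times normal cones together with the Steiner formula to isolate the top face's contribution; it then applies \eqref{square} once, with $k=d-2$ and directions $u_i,e_4,\dots,e_d$ ($u_i$ as in Theorem \ref{non}), lifting the three-dimensional configurations to cylinder configurations $\xi_l'=C_0\cap P^{-1}(\xi_l)$ so that everything collapses to the 3D system $h_i=t_i$ together with \eqref{wer}, which is inconsistent. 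You instead induct on $d$, peeling off one flat direction at a time and packaging the $T$-linear coefficient as an induced $(d-1)$-dimensional estimator; this works, and the ``delicate step'' you flag does resolve cleanly: for $n\perp e_d$, any configuration containing a vertical edge split between $B_l$ and $W_l$ satisfies $h(B_l\oplus \check{W}_l,n)\geq 0$, so $\delta_{(B_l,W_l)}$ annihilates it, and only the cylinder configurations $B_l=\pi B_l\times\{0,1\}$, $W_l=\pi W_l\times\{0,1\}$ survive on the lateral part --- exactly one per $(d-1)$-configuration, so the two-to-one vertex projection causes no ambiguity in the induced weights, and $\mathcal{M}$-equivalence descends because $M'\times \mathrm{id}$ preserves $C_0$. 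To close your argument you should make two points explicit: first, the existence condition must descend as well, i.e.\ differentiating $\sum_j w_j\bar{\varphi}_j(X_T)=0$ in $T$ (only cylinder configurations contribute to the $T$-linear part of $\bar{\varphi}_j$, by the same $\delta$-argument) gives $\sum_m w_m'\bar{\varphi}_m'(X')=0$, without which Corollary \ref{confcor} in dimension $d-1$ does not identify your $T$-coefficient with $\lim_{a\to 0}E\hat{V}_{d-3}'(X')$; second, the induction hypothesis must be stated in the strengthened form ``no weights are asymptotically unbiased on the subclass of box-thickenings,'' which is legitimate since the witnesses in Theorem \ref{non} are $B(r)$ and $[0,t_iu_i]\oplus B(r)$, all inside that subclass. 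Comparing the two routes: the paper's one-shot reduction avoids defining induced estimators and strengthening the statement, and \eqref{square} is a clean identity of independent interest, while your version trades the full face-lattice/Steiner bookkeeping for the cap-versus-lateral split and exact linearity in $T$ (note $V_{d-2}(X_T)=V_{d-2}(X_0')+TV_{d-3}(X')$ holds exactly, not just up to $O(1)$, since $X_0'$ is convex). Both arguments ultimately rest on the same 3D computation from Theorem \ref{non}.
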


For shortness we write 
\begin{equation*}
G_j=\frac{1}{2}\sum_{l:\xi_l\in \eta_j}(Q^+(B_l)-Q^-({W}_l))\delta_{(B_l,W_l)}
\end{equation*}
in the following.

\begin{proof}
The idea is to generalize the approach for $d=3$ by considering some example sets for which the computations reduce to the ones already performed in dimension~3. Again we assume that an asymptotically unbiased estimator $\hat{V}_{d-2}$ is given.

Let $u_1,\dots ,u_k\in S^{d-1}$ be $k\leq d-2$ orthonormal vectors. We consider sets of the form  
\begin{equation*}
([0,t_1u_1]\oplus \dotsm \oplus [0,t_ku_k])\times rS^{d-k-1}(u_1,\dots,u_k)
\end{equation*}
 where $t_i> 0$.

We first show by induction in $k$ that the weights must satisfy
\begin{equation}\label{square}
\sum_{j\in J} w_j \int_{(\bigoplus_{i=1}^k[0,t_iu_i] )\times rS^{d-k-1}(u_1,\dots,u_k)}G_jd\Ha^{d-1}=\frac{\kappa_{d-k}}{\kappa_2}\binom{d-k}{2}r^{d-k-2}\prod_{i=1}^k t_i
\end{equation}
where $\kappa_N$ is the volume of the unit ball in $\R^N$.
This is obviously true for $k=0$ since the estimator is unbiased for $X=B(r)$. Assume it is true for $k-1$ and consider $ X= P\oplus B(r)$ where $P=\bigoplus_{i=1}^k [0,t_iu_i]$.
The relative open $m$-faces of $P$ are the sets
\begin{equation*}
x+\bigoplus_{i=1}^m (0,t_{k_i}u_{k_i}) 
\end{equation*}
for
\begin{equation*}
x\in A({k_1},\dots,{k_m})=\bigg\{ \sum_{s\neq k_1,\dots k_m} \eps_s t_s u_s \mid \eps_s\in\{0,1\}\bigg\}.
\end{equation*}
The normal cone of such a face is
\begin{equation*}
N(x,k_1,\dots k_m)=\bigcap_{s\neq k_1,\dots k_m} H^+_{(-1)^{\eps_s-1}u_s}\cap \text{span}(u_{k_1},\dots,u_{k_m})^\perp.
\end{equation*}
 Then $\partial X$ can be divided into disjoint subsets of the form 
\begin{equation*}
x+\bigg(\bigoplus_{i=1}^m (0,t_{k_i}u_{k_i})\bigg) \times (N(x,k_1,\dots ,k_m)\cap rS^{d-1})
\end{equation*}
for $x\in A({k_1},\dots,{k_m})$.
Note that
\begin{equation}\label{sphere}
\bigcup_{x\in A({k_1},\dots,{k_m})}N(x,k_1,\dots k_m)\cap rS^{d-1} = rS^{d-m-1} (u_{k_1},\dots,u_{k_m})
\end{equation}
and for $x_1\neq x_2$, 
\begin{equation*}
N(x_1,k_1,\dots k_m)\cap N(x_2,k_1,\dots k_m)\cap rS^{d-1} 
\end{equation*}
has $\Ha^{d-m-1}$-measure zero in $rS^{d-m-1} (u_{k_1},\dots,u_{k_m})$.
Thus for $m< k$,
\begin{align*}
\sum_{j\in J} w_j{}& \sum_{x\in A({k_1},\dots,{k_m})}\int_{x+(\bigoplus_{i=1}^m (0,t_{k_i}u_{k_i}))\times (N(x,k_1,\dots,k_m)\cap rS^{d-1})}G_jd\Ha^{d-1}\\
&=\sum_{j\in J} w_j \int_{(\bigoplus_{i=1}^m(0,t_{k_i}u_{k_i}))\times rS^{d-m-1}(u_{k_1},\dots,u_{k_m})}G_jd\Ha^{d-1}\\
&= \frac{\kappa_{d-m}}{\kappa_2}\binom{d-m}{2}r^{d-m-2}\prod_{i=1}^m t_{k_i} 
\end{align*}
where the last equality follows by induction. 
But then it must hold for $m=k$ as well since on the one hand $\lim_{a\to 0}E\hat{V}_{d-2}(P\oplus B(r))$ equals
\begin{equation*}
\sum_{j\in J} w_j\sum_{m=0}^k\, \sum_{\substack{1\leq k_1<\dotsm < k_m \leq k,\\ x\in A({k_1},\dots,{k_m})}} \, \int_{x+(\bigoplus_{i=1}^m (0,t_{k_i}u_{k_i}))\times (N(x,k_1,\dots,k_m)\cap rS^{d-1})}G_jd\Ha^{d-1}
\end{equation*}
by Lemma \ref{kryds}, while on the other hand, the Steiner formula yields
\begin{align*}
V_{d-2}\left(P\oplus B(r)\right)&=\frac{1}{\kappa_2}\sum_{m=0}^{d-2} \binom{d-m}{2}r^{d-m-2}\kappa_{d-m}V_m(P) \\
&= \frac{1}{\kappa_2}\sum_{m=0}^{d-2} \binom{d-m}{2}r^{d-m-2}\kappa_{d-m}\sum_{1\leq k_1<\dotsm < k_m\leq k}\prod_{i=1}^m t_{k_i}.
\end{align*}
Here the last equality uses \cite[Equation (4.2.30)]{schneider} and the observation \eqref{sphere}.
This proves the induction step. 

In particular, \eqref{square} must hold for $k=d-2$ and the orthonormal vectors $u_i,e_4,\dots,e_d$ where $u_i\in \text{span}(e_1,e_2,e_3)$ are defined as in Theorem \ref{non} for $i=1,2,3$. That is,
\begin{equation}\label{tieq}
\sum_{j\in J} w_j \int_{\left([0,t_iu_i]\oplus \bigoplus_{m=4}^d[0,e_m]\right)\times rS^{1}(u_i,e_4,\dots,e_d)}G_jd\Ha^{d-1}= t_i.
\end{equation} 

If $\xi_l\subseteq \text{span}(e_1,e_2,e_3)\cong \R^3$ is a configuration in $\R^3$, we let $\xi_l'\subseteq \R^d$ denote the configuration $C_0 \cap P^{-1}(\xi_l)$ where $P: \R^d \to \text{span}(e_1,e_2,e_3)$ is the projection.
If $\xi_{l_1}$ and $\xi_{l_2}$ differ only by a rigid motion, so do $\xi_{l_1}'$ and $\xi_{l_2}'$. If the configuration classes $\eta_j^3$ in $\R^3$ are indexed by $j\in J$ and $\xi_l\in \eta_j^3$, we let $\eta_j^d$, $j\in J$, denote the configuration class of $\xi_l'$.

For  $x\in ([0,t_iu_i]\oplus \bigoplus_{m=4}^d[0,e_m])\times rS^{1}(u_i,e_4,\dots,e_d)$,
\begin{equation*}
\delta_{(B_l,W_l)}(n(x))=\delta_{(PB_l,PW_l)}(n(x)).
\end{equation*}
Thus only configurations of type $\eta_j^d$ with $j\in J$ can occur. Moreover, since all principal curvatures vanish in the directions $u_i,e_4,\dots,e_d$, 
\begin{align*}
\sum_{j\in J} {}&w_j \int_{\left([0,t_iu_i]\oplus \bigoplus_{m=4}^d [0,e_m]\right)\times rS^{1}(u_i,e_4,\dots,e_d)}G_jd\Ha^{d-1}\\
&=\sum_{j\in J} w_j \sum_{l:\xi_l\in \eta_j^d}\frac{1}{2}\int_{[0,t_iu_i] \times rS^{1}(u_i)}(Q^+(PB_l)-Q^-(P{W_l}))\delta_{(PB_l,PW_l)}d\Ha^{d-1}\\
&=h_i.
\end{align*}
where $h_i$ is as in the proof of Theorem \ref{non}.
Thus by \eqref{tieq} the weights must satisfy the equations $h_i=t_i$. 

Applying \eqref{square} to the $k=d-3$ vectors $e_4,\dots,e_d$ shows that the weights must also satisfy \eqref{wer}. But then the $w_j$ have to satisfy the same set of equations as in the proof of Theorem \ref{non}, which was impossible.\qed
\end{proof}

\begin{acknowledgements}
The author is supported by the Centre for Stochastic Geometry and Advanced Bioimaging, funded by the Villum Foundation.
The author is most thankful to Markus Kiderlen for helpful suggestions and proofreading. 
\end{acknowledgements}

\end{document}